\documentclass[11pt]{article}
\usepackage[a4paper,margin=25mm]{geometry}
\usepackage[T1]{fontenc}
\usepackage{lmodern}
\usepackage{amsmath,amssymb,amsthm,mathtools}
\usepackage{enumitem,booktabs,array}
\usepackage[expansion=false]{microtype}
\usepackage[hidelinks]{hyperref}
\newtheorem{theorem}{Theorem}[section]
\newtheorem{lemma}[theorem]{Lemma}
\newtheorem{proposition}[theorem]{Proposition}
\newtheorem{corollary}[theorem]{Corollary}
\theoremstyle{definition}
\newtheorem{definition}[theorem]{Definition}
\theoremstyle{remark}
\newtheorem{remark}[theorem]{Remark}
\newcommand{\Sk}{\operatorname{Sk}}
\newcommand{\N}{\mathbb N}
\newcommand{\R}{\mathbb R}
\newcommand{\T}{\mathbb T}
\newcommand{\st}{\operatorname{st}}

\newcommand{\up}{\uparrow}
\newcommand{\dn}{\downarrow}
\newcommand{\Z}{\mathbb Z}
\newcommand{\arch}{\mathrel{\asymp}}
\newcommand{\hs}{\mathbin{\oplus}}
\newcommand{\od}{\mathbin{\odot}}
\newcommand{\EE}{\mathcal E}
\newcommand{\tr}{\operatorname{tr}}
\newcommand{\lm}{\operatorname{lm}}
\newcommand{\lc}{\operatorname{lc}}
\newcommand{\Fr}{\operatorname{Fr}}
\newcommand{\NN}{\mathsf N}
\newcommand{\SSS}{\mathsf S}
\newcommand{\KK}{\mathsf K}
\newcommand{\FF}{\mathsf F}
\newcommand{\GG}{\mathsf G}
\newcommand{\calB}{\mathcal B}
\newcommand{\calA}{\mathcal A}
\newcommand{\calL}{\mathcal L}
\newcommand{\Bsmall}{\mathcal S_0}
\DeclareMathOperator{\coef}{coef}
\newcommand{\Lam}{\Lambda}
\newcommand{\Acal}{\mathcal A}
\newcommand{\Ycal}{\mathcal Y}
\newcommand{\Zcal}{\mathcal Z}
\newcommand{\calW}{\mathcal W}
\newtheorem{hypothesis}[theorem]{Condition}
\numberwithin{equation}{section}
\hypersetup{pdftitle={Finite-tower bounds for Skolem functions}}
\title{Finite-tower bounds for Skolem functions}
\author{Andreas Weiermann,\\
Ghent University, Faculty of Mathematics,\\ Department for Mathematics WE16,\\ Krijgslaan 297, Building S8}
\date{9 September 2026}
\begin{document}
\maketitle
\begin{abstract}
We bound the eventual order types of Skolem functions below finite
exponential towers.
Writing $E_0(u)=u$, $E_{n+1}(u)=2^{E_n(u)}$, and
$\omega_0=1$, $\omega_{k+1}=\omega^{\omega_k}$, the argument gives
\[
 |\Sk_{<E_n(x^m)}|<\omega_{r_n},\qquad
 r_n=2+\frac{n(n+3)}2\quad(n\ge1,\ m\ge2\text{ fixed}).
\]
For triple towers we obtain the sharper bound
$|\Sk_{<E_3(x^m)}|<\omega_{10}$ for every fixed $m\ge1$.
The proof combines comparisons of asymptotic expansions with finite
recursive decompositions and ordinal estimates for ordered sums and
products. The analytic part is developed in the classical field of
logarithmic-exponential series. We prove the required discreteness,
support and truncation statements in this setting. The external
inputs are the series construction of van den Dries--Macintyre--Marker
and the order and ordinal estimates recalled from Berarducci--Mamino.
These bounds imply $|\Sk|=\varepsilon_0$.
\end{abstract}
\noindent
\tableofcontents

\section{Introduction}

In 1956 Skolem introduced the class that now bears his name: the
smallest class of positive functions containing $1$ and $x$ and closed
under addition, multiplication, and exponentiation.  Functions are
compared by eventual growth: $f<g$ means that $f(x)<g(x)$ for all
sufficiently large $x$, and functions agreeing eventually are
identified.  Skolem exhibited a
well-ordered subfamily of order type $\varepsilon_0$, thereby proving
that $\varepsilon_0$ is a lower bound for the eventual order type of
$\Sk$, and conjectured that this lower bound is exact
\cite{Skolem1956}.  Eventual comparison is a linear order; this follows
from Hardy's comparison theory for the larger logarithmico-exponential
class \cite{Hardy1910}.  Richardson subsequently solved the identity
problem for integral exponential functions \cite{Richardson1969}.
Ehrenfeucht then proved,
using Kruskal's tree theorem, that the eventual order on $\Sk$ is in
fact a well-order \cite{Ehrenfeucht1973}.

For a positive function $R$, write $\Sk_{<R}$ for the Skolem functions
eventually smaller than $R$, and use $|A|$ for the order type of an
ordered family $A$, not its cardinality.  We abbreviate finite towers
of ordinal exponentials by
\[
 \omega_0=1,\qquad \omega_{k+1}=\omega^{\omega_k}.
\]

The first general upper bounds were obtained by translating the
formation of Skolem terms into ordinal combinatorics.  Schmidt used the
theory of maximal order types of well-partial orderings developed by
de Jongh and Parikh \cite{deJonghParikh1977} to obtain the
Feferman--Sch\"utte ordinal $\Gamma_0$ as an upper bound
\cite{Schmidt1978}.  Levitz replaced this indirect estimate by a direct
ordinal calculation based on Carruth's arithmetic of order types
\cite{Carruth1942}.  His bound was $\varphi_2(0)$, the least critical
epsilon number, equivalently the least ordinal $\alpha$ satisfying
$\alpha=\varepsilon_\alpha$ \cite{Levitz1978}.  Van den Dries and
Levitz later determined the order type of the fundamental fragment
below $2^{2^x}$, obtaining the exact value \cite{DriesLevitz1984}
\[
 \bigl|\Sk_{<2^{2^x}}\bigr|=\omega^{\omega^\omega}=\omega_3.
\]
This was the first sharp calculation for a fragment extending
substantially beyond the polynomial functions.

Berarducci and Mamino subsequently developed a much finer asymptotic
analysis, using an embedding of the exponential Skolem field into the
surreal numbers.  Among other results, they proved
\[
 \bigl|\Sk_{<2^{x^x}}\bigr|\le\varepsilon_0,
\]
as well as estimates for finite sums and a discreteness theorem for
the possible limits of ratios of Skolem functions \cite{BM}.

We recall here the asymptotic relations that enter this analysis.
For positive germs $f$ and $g$, write $f\preceq g$ if $f=O(g)$,
and write $f\arch g$ if $f\preceq g$ and $g\preceq f$.  The latter
relation says that $f$ and $g$ have the same Archimedean growth rate;
for Skolem functions this is equivalent to $f/g$ tending to a positive
real number.  The finer relation $f\sim g$ means that $f/g$ tends to
$1$.  Thus an Archimedean class fixes the leading order of growth,
while its asymptotic classes distinguish the possible leading constant
factors.  Berarducci and Mamino proved that the asymptotic classes in
each Archimedean class have order type at most $\omega$.  We shall also
use comparisons at a variable precision: for a positive scale $\rho$,
\[
 f\equiv_\rho g
 \quad\Longleftrightarrow\quad
 \log(f/g)=O(\rho).
\]
For $\rho=1$ this is precisely Archimedean equivalence.  Decreasing
$\rho$ makes the comparison finer, while increasing $\rho$ makes it
coarser.  This allows the argument to retain as much of an asymptotic
expansion as is needed at a given stage.

These results provide the order-theoretic basis of the present paper. Earlier work of
Dahn on the limit behaviour of exponential terms provides important
background for this asymptotic viewpoint \cite{Dahn1984}; related
decidability questions for initial fragments were studied by Gurevi\v{c}
\cite{Gurevic1986}.

Our investigation began with two model cases.  We first obtained
$\omega_4$ as an upper bound for the fragment below $2^{x^x}$ and then
$\omega_5$ for the fragment below $2^{2^{2^x}}$.  The next case,
$2^{2^{2^{x^2}}}$, required a more systematic analysis.  Studying it in
depth, and then replacing $x^2$ by $x^m$ for arbitrary fixed $m$, led
to the strategy developed here.  A bound on the leading growth rates
alone is not enough: functions with the same leading behaviour may
still occupy different positions in the eventual order.  The proof
therefore also estimates the order types of families whose members
agree to a prescribed accuracy.  The crucial requirement is that these
estimates remain uniform as the reference function and the accuracy
vary.

Writing
\[
 E_0(u)=u,\qquad E_{n+1}(u)=2^{E_n(u)},
\]
our main result is
\[
 \bigl|\Sk_{<E_n(x^m)}\bigr|<\omega_{r_n},
 \qquad r_n=2+\frac{n(n+3)}2
 \quad(n\ge1,\ m\ge2\text{ fixed}).
\]
The starting point is the elementary identity
$\log(U^b)=b\log U$, which turns exponentiation into multiplication
and relates the comparison of powers to that of their bases and
exponents.  We first bound the possible growth rates of these
logarithms.  We then compare functions more closely, keeping track of
the terms of their asymptotic expansions that exceed a prescribed
error.  The resulting comparisons are organised into finite recursive
decompositions, with simpler comparison problems at each subsequent
step.  Uniform estimates for the ordered families arising in these
decompositions are combined using ordinal bounds for finite sums and
products.  This yields an induction on the height of the exponential
tower, carried out simultaneously for all fixed polynomial degrees
$m$.

We carry out the asymptotic calculations in the classical field of
logarithmic-exponential series of van den Dries, Macintyre and Marker
\cite{DMM97,DMM01}. Its ordered series expansions allow us to retain
precisely the terms that dominate a prescribed error. We give a direct
proof of the required discreteness theorem, valid also for infinite
exponents, and construct the special Hahn subfield containing the
Skolem functions. This develops the analytic argument in ordinary
ordered fields and formal series. In particular, it requires no
embedding into the surreal numbers. The functions and order types in
the statements remain the ordinary Skolem functions and their
eventual order.

Since the finite towers $E_n(x)$
are cofinal in $\Sk$, the bounds for all finite heights yield the upper
bound $|\Sk|\le\varepsilon_0$; together with Skolem's lower bound this
gives $|\Sk|=\varepsilon_0$.

\section{Conventions and proof structure}
Skolem functions are the germs at $+\infty$ generated by
$1,x,+,\cdot$ and exponentiation. We write
$\Sk_{<R}=\{f\in\Sk:f<R\}$ and use the eventual order.
The notation $|A|$ always means order type. Put
\[
 E_0(u)=u,\quad E_{n+1}(u)=2^{E_n(u)},\quad
 \omega_0=1,\quad\omega_{k+1}=\omega^{\omega_k},\quad
 \EE(a)=\omega^{\omega^a}.
\]
Unmarked ordinal operations are ordinary operations; natural sum,
product and iterated product are denoted by $\hs,\od$ and
$\alpha^{\od\beta}$. Set $\N=\{0,1,2,\ldots\}$.
For an ordered additive family $A$, let $\Sigma^+A$ be its
nonempty finite sums, allowing repetition, and put
$\Sigma^0A=\{0\}\cup\Sigma^+A$.

For germs, $f\preceq g$ means $f=O(g)$, $f\prec g$ means
$f=o(g)$, $f\arch g$ means mutual domination, and
$f\sim g$ means $f/g\to1$. Write $[f]$
for the Archimedean class of a positive $f$. Constants in
$O(\cdot)$ are real constants and may depend on the member germ.
The ambient ordered exponential field is the well-based
logarithmic-exponential series field $\T$, with positive infinite
variable $x$. Section~\ref{sf:sec:series} specifies its construction
and the classical facts being used. Skolem germs are interpreted
canonically in $\T$, preserving their order and generating operations.
All auxiliary scales, cutoffs, widths and exponential parameters are
elements of $\T$ unless a smaller domain is specified; real constants
remain real, and ordinal parameters are ordinary ordinals. A reference
required to be a Skolem function is always an actual member of $\Sk$.
The asymptotic comparisons of Skolem germs agree with those of their
images. For field elements, $u=O(v)$ means $|u|\le C|v|$ for some
real $C>0$, while $u=o(v)$ means $|u|<\epsilon|v|$ for every
real $\epsilon>0$. For positive $\rho\in\T$,
\[
 f\equiv_\rho g\iff\log(f/g)=O(\rho),\qquad
 Q_\rho(A)=|A/{\equiv_\rho}|.
\]
Thus $Q_1(A)=|A/\arch|$. The notation $A/O(Y)$ instead denotes
the image in the \emph{additive} ordered group modulo its convex
subgroup $O(Y)$. These two quotient constructions are distinct.
For convex fibres of type at most $\alpha$, indexed by a quotient
of type $\beta$, the ordered-sum bound is
$\alpha\beta\le\alpha\od\beta$.

We first record the order and ordinal estimates used in the counting
argument. We then prove discreteness in an elementary extension of the
real exponential field and develop the support and truncation calculus
in logarithmic-exponential series. The monoid and alphabet estimates
lead to a global counting implication under explicit local hypotheses.
Those hypotheses are proved by finite coarse-tree induction and
successive window tiers. Finally, an induction on tower height,
simultaneous in the fixed polynomial degree, supplies the cap and
leaf bounds.

\section{Order and ordinal inputs}
The following facts are recalled from \cite{BM}; several are classical
results reviewed there. They supply the order-theoretic inputs to the
argument. The separate series-theoretic inputs are listed as
(LE1)--(LE4) in Section~\ref{sf:sec:series}; the discreteness and
special-support statements are proved internally below.
\begin{enumerate}[label=\textup{(BM\arabic*)},leftmargin=4em]
\item $\Sk$ is well ordered. Each member is a finite positive
sum of finite products of components. A component above $x$ is
$A^b$ with $A\ge2$ and component exponent $b\ge x$
(\S1, Remark 10.1 and Proposition 10.2).
Integer values at positive integer arguments follow directly
from the term definition. The resulting discreteness of bounded
Skolem differences is proved in Lemma~\ref{sf:lem:integers}.
Every nonconstant Skolem function is eventually at least $x$,
by induction on a reduced term.
\item Weakly increasing images of products of well orders have
the natural-product bound; finite unions have the natural-sum
bound (Fact 4.1).
\item For a positive well-ordered subset $A$ of an ordered abelian
group, $|A|\le\alpha$, $|A/\arch|\le\beta$, $\alpha\ge2$ imply
\begin{equation}
 |\Sigma^+A|\le(\alpha^\omega)^{\od\beta}.
 \label{ext-sums}
\end{equation}
If $k\ge2$, $\alpha<\omega_{k+1}$ and $\beta<\omega_k$,
this bound is strictly below $\omega_{k+1}$
(Theorem 4.5 and Lemma 4.8).
\item At limit exponents, natural powers equal ordinary powers.
Also, for $u\ge v$,
\begin{equation}
 u\hs v\le u+v\cdot2,\qquad
 \sup_{k<\omega}u\od k=u\cdot\omega.
 \label{ext-ordinals}
\end{equation}
These are Lemma 3.8, Corollary 3.10 and Proposition 3.11.
The finite towers $\omega_k$ are closed under finite natural
sums and products of smaller ordinals.
\item The classical fragment satisfies
$|\Sk_{<2^{2^x}}|\le\omega_3$ and
$|\Sk_{<2^{2^x}}/\arch|\le\omega_2$
(Theorem 13.1). Moreover $\Sk_{<2^x}=\N[x]\setminus\{0\}$,
and $\varepsilon_0\le|\Sk|$ (\S1).
\end{enumerate}
The elementary cases of an empty or singleton summand family
are absorbed by the infinite majorants used below. We use the
usual finite-branching tree principle: an infinite finitely
branching rooted tree has an infinite branch. It follows by
successively choosing a child with infinitely many descendants.

\section{Discreteness in ordered exponential fields}\label{sf:sec:field}

\subsection{Elementary extensions and scaled comparisons}

Fix an elementary extension $F\succeq\R_{\exp}$ and an element
$X\in F$ greater than every real number. Evaluate Skolem terms at $X$.
This identifies $\Sk$ with an ordered subset of $F$, compatibly with
addition, multiplication and exponentiation: an eventual inequality
or equality of terms transfers to $X$ by elementarity, and eventual
totality gives the converse. We subsequently write $x$
for this image of the identity function.

For $v\ne0$, the notation $u=O(v)$ means $|u|\le C|v|$ for some
$C\in\R_{>0}$, and $u=o(v)$ means $|u|<\epsilon|v|$ for every
$\epsilon\in\R_{>0}$. For positive $u,v$, write $u\preceq v$
for $u=O(v)$, $u\prec v$ for $u=o(v)$, and $u\succ v$ for
$v\prec u$. Write $u\arch v$ for mutual $O$-domination, and
$u\sim v$ for $u/v=1+o(1)$.
The logarithm on $F_{>0}$ is the inverse of its exponential.

Every finite $a\in F$ has a unique standard part $\st(a)\in\R$
with $a-\st(a)=o(1)$. Indeed, take the supremum in $\R$ of the real
numbers below $a$. Completeness of $\R$ gives existence, and the
definition gives uniqueness. Standard part preserves sums, products
and order wherever the elements are finite. If $a>0$ is finite and
not infinitesimal, then
\[
 \log a=\log\st(a)+o(1),\qquad
 e^b=e^{\st(b)}+o(1)\quad(b\text{ finite}).
\]
These facts follow from the usual real continuity estimates and
elementarity. We use $u^v=\exp(v\log u)$ for $u>0$.

\begin{lemma}[Bounded Skolem differences]\label{sf:lem:integers}
If $f,g\in\Sk$ and $f-g=O(1)$ in $F$, then $f-g\in\Z$.
\end{lemma}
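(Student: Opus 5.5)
The plan is to reduce the statement to real germs. There the two facts that make it work are integrality at integer arguments (BM1) and the eventual sign-constancy of Hardy's logarithmico-exponential functions \cite{Hardy1910}. Let $h=f-g$, regarded first as a real function given by the two Skolem terms. It is a logarithmico-exponential function in Hardy's sense, so it is eventually of constant sign unless it is eventually zero, and the same holds for $h-c$ and for $|h|-C$ for every real $c$ and $C$. By (BM1), $h(n)\in\Z$ for all sufficiently large positive integers $n$.

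\emph{Step 1: transfer boundedness from $F$ to real germs.} Suppose $|h(X)|\le C$ in $F$ for some real $C$. The function $|h|-C$ is eventually of constant sign on $\R$, say on $(a,\infty)$ with $a$ real. If that sign were positive, the first-order sentence $\forall t>a\;|h(t)|>C$ would hold in $\R_{\exp}$. By elementarity it would then hold in $F$, and $X>a$ would give a contradiction. Hence $|h|\le C$ eventually on $\R$.

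\emph{Step 2: the real germ $h$ has an integer limit.} A bounded element of a Hardy field is eventually monotone, so it converges to a real limit $L$. Along the integers the values $h(n)$ are integers converging to $L$, so they are eventually constant and $L\in\Z$.

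\emph{Step 3: $h$ is eventually equal to $L$.} Apply sign-constancy to $h-L$. Either $h-L$ is eventually zero, or it is eventually nonzero. In the second case $h(n)-L$ would be a nonzero integer tending to $0$, which is impossible. So $h=L$ on some ray $(b,\infty)$. The sentence $\forall t>b\;f(t)-g(t)=L$ transfers to $F$, and evaluating at $X$ gives $f-g=L\in\Z$ in $F$.

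I expect the main obstacle to be the justification of sign-constancy, which carries Steps 1 and 3. Citing Hardy's theorem for L-functions is the cleanest route; since Skolem terms are L-functions, it applies directly. An alternative is to use o-minimality of $\R_{\exp}$ (Wilkie): the zero set and the set $\{|h|\le C\}$ are finite unions of points and intervals. Either route then needs only the elementarity transfer of $\forall$-statements over a real ray. If one prefers not to invoke either, one would have to argue directly by induction on terms, and I would avoid that.
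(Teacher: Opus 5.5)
Your proof is correct and follows essentially the paper's route. You transfer the bound $|f-g|\le C$ from $F$ to the germs via eventual comparability plus elementarity, use integrality of $f(n)-g(n)$ at positive integers, use comparability to force $f-g$ to be eventually a constant integer, and transfer that equality back to $F$. The differences are minor: the paper stays inside $\Sk$ by comparing $f$ with $g+M$ and $g+k$ for integers $M,k$, so it needs only eventual totality of $\Sk$; and it replaces your limit argument in Step~2 by pigeonhole on the finitely many possible integer values, so Hardy's monotonicity theorem is not needed.
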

\begin{proof}
Choose an integer $M>|f-g|$ in $F$. Eventual totality, applied to
$f,g+M$ and to $g,f+M$, together with the order-preserving evaluation,
gives $|f-g|<M$ eventually for the germs. Thus
at all sufficiently large positive integers $n$, the integer
$f(n)-g(n)$ belongs to one fixed finite subset of $\Z$. Some integer
$k$ occurs at arbitrarily large $n$. If $k\ge0$, eventual totality
applied to $f$ and $g+k$ implies $f=g+k$: either strict eventual
inequality would contradict those equalities. If $k<0$, apply the
same argument to $f+(-k)$ and $g$. The case $k=0$ needs no addition.
\end{proof}

\begin{lemma}[Scaled comparisons]\label{sf:lem:scaled}
Let $c\in F$, $c\ge1$.
\begin{enumerate}[label=\textup{(\roman*)},leftmargin=2.5em]
\item The relation $u\arch_c v$ defined by $(u/v)^c\arch1$ is
an equivalence relation with convex classes on $F_{>0}$. It implies
$u\arch v$.
\item If $c$ is infinite and $z>0$, then for every $s\in\R$,
\[
 z^c=e^s+o(1)\quad\Longleftrightarrow\quad
 c(z-1)=s+o(1).
\]
\item If $c,d\ge1$, $d/c=t+o(1)$ with $t\in\R_{>0}$, and
$z^c=r+o(1)$ with $r>0$ real, then $z^d=r^t+o(1)$.
\item If $c$ is finite and $\alpha=\st(c)$, then
$z^c=r+o(1)$, $r>0$, implies $z=r^{1/\alpha}+o(1)$.
\end{enumerate}
\end{lemma}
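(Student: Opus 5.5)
The plan is to reduce every item to the two facts already recorded: the standard-part continuity estimates $\log a=\log\st(a)+o(1)$ for finite non-infinitesimal $a>0$ and $e^b=e^{\st(b)}+o(1)$ for finite $b$, together with the identity $\log(z^c)=c\log z$. The basic reformulation is this: for positive $w$, we have $w\arch1$ if and only if $\log w=O(1)$. Indeed, $C^{-1}\le w\le C$ gives $|\log w|\le\log C$, and conversely $|\log w|\le D$ gives $e^{-D}\le w\le e^D$, using monotonicity of $\exp$ and $\log$ in $F$.

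\emph{Item (i).} Set $\ell(u)=\log u$. Then $u\arch_c v$ holds exactly when $c\,(\ell(u)-\ell(v))=O(1)$, that is, when $\ell(u)-\ell(v)=O(1/c)$. So the relation is the pullback, under the order-isomorphism $\ell\colon F_{>0}\to F$, of congruence modulo the convex subgroup $O(1/c)$ of $(F,+)$. It is therefore an equivalence relation with convex classes. Since $c\ge1$, we have $O(1/c)\subseteq O(1)$, and so the relation implies $u\arch v$.

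\emph{Item (ii).} Assume $z^c=e^s+o(1)$. Taking logarithms gives $c\log z=s+o(1)$. Because $c$ is infinite, $\log z=o(1)$, and hence $z=1+o(1)$. Next I would use the real estimate $|\log(1+h)-h|\le h^2$ for $|h|\le1/2$, transferred to $F$ by elementarity. With $h=z-1$ this gives $\log z=h(1+o(1))$. Since $c\log z$ is finite, $ch$ is finite, and then $c\log z-ch=ch\cdot o(1)=o(1)$. Therefore $c(z-1)=s+o(1)$. The converse runs the same way: finiteness of $ch$ forces $h=o(1)$, the same estimate gives $c\log z=s+o(1)$, and exponentiating with the standard-part fact yields $z^c=e^s+o(1)$.

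\emph{Items (iii) and (iv).} For (iii), from $z^c=r+o(1)$ we get $c\log z=\log r+o(1)$. Then $d\log z=(d/c)(c\log z)=(t+o(1))(\log r+o(1))=t\log r+o(1)$, since all factors are finite. Exponentiating gives $z^d=r^t+o(1)$. For (iv), $c$ is finite, and $c\ge1$ implies $\alpha\ge1$, so $1/c=1/\alpha+o(1)$. From $c\log z=\log r+o(1)$ we obtain $\log z=(1/c)(\log r+o(1))=\log r/\alpha+o(1)$, and exponentiating gives the claim.

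The main obstacle is item (ii), specifically the step passing between $c\log z$ and $c(z-1)$. It needs the uniform quadratic bound on $\log(1+h)-h$ transferred from $\R_{\exp}$ to $F$, and it needs the finiteness of $ch$ to be established before the error term can be absorbed.
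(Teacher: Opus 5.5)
Your proof is correct and follows essentially the paper's route. Items (iii) and (iv) are identical to the paper's. In (ii) you transfer the same kind of second-order estimate, using $\log(1+h)=h+O(h^2)$ in both directions, whereas the paper uses $e^t=1+t+O(t^2)$ forward and gets the sharper error $O(1/c)$. In (i), pulling back congruence modulo the convex subgroup $O(1/c)$ under $\log$ is just a repackaging of the paper's products/reciprocals/monotonicity argument.
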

\begin{proof}
Products, reciprocals and monotonicity of positive powers give (i).
For the final implication in (i), if $u/v$ is infinite, then its
$c$th power is infinite, and if $u/v$ is infinitesimal its $c$th
power is infinitesimal.

For (ii), $z^c=e^s+o(1)$ gives $c\log z=s+o(1)$, so
$\log z=O(1/c)=o(1)$. The transferred real estimate
$e^t=1+t+O(t^2)$ for infinitesimal $t$ gives
$c(z-1)=c\log z+O(1/c)=s+o(1)$. Conversely
$z-1=O(1/c)$ and $\log(1+t)=t+O(t^2)$ give the asserted
limit after exponentiating. For (iii), use
$d\log z=(d/c)(c\log z)=t\log r+o(1)$.
For (iv), divide $c\log z=\log r+o(1)$ by $c=\alpha+o(1)$.
\end{proof}

\subsection{Additive irreducibles}

Call $a\in\Sk$ an \emph{additive irreducible} if it cannot be written
as $u+v$ with $u,v\in\Sk$. This is additive irreducibility; it
does not require multiplicative irreducibility.

\begin{lemma}\label{sf:lem:atoms}
Every member of $\Sk$ is a finite nonempty sum of additive irreducibles.
Every additive irreducible other than $1$ and $x$ admits one of the forms
\[
 uv\quad(u,v\in\Sk,\ u,v\ge x),\qquad
 u^v\quad(u,v\in\Sk,\ u\ge2,\ v\ge x).
\]
\end{lemma}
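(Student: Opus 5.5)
We argue by induction on term complexity, or equivalently on a well-founded measure of Skolem terms (term length), using the generating clauses $1$, $x$, $u+v$, $uv$, $u^v$. For the first assertion, the set of members of $\Sk$ that are finite nonempty sums of additive irreducibles contains every additive irreducible, in particular $1$ and $x$, and is closed under addition. It remains to show closure under multiplication and exponentiation. Alternatively, and more simply, take any $f\in\Sk$. If $f$ is irreducible we are done. Otherwise write $f=u+v$ with $u,v\in\Sk$. Since all Skolem functions are positive, $u,v<f$, and every Skolem function is eventually at least $1$, so $f\ge 2$ and $f$ dominates its summands. Well-ordering of $\Sk$ from (BM1) lets us run induction along the eventual order: assuming every $g<f$ is a finite nonempty sum of irreducibles, so are $u$ and $v$, hence so is $f$. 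This gives the first claim without any term manipulation.

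For the second claim, let $a$ be an additive irreducible other than $1,x$. Fix a term $t$ representing $a$. Its outermost operation cannot be $+$ by irreducibility, and $t$ is not an atom, since $a\ne1,x$. So $a=uv$ or $a=u^v$ with $u,v\in\Sk$. Each factor is either $1$ or nonconstant, and in the latter case is eventually $\ge x$ by (BM1).

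In the product case, if $u=1$ then $a=v$ and we recurse on the shorter term for $v$. Similarly if $v=1$. Choosing $t$ of minimal length excludes these, so $u,v\ge x$.

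In the power case, if $v=1$ then $a=u$, again contradicting minimality. Hence $v$ is nonconstant, so $v\ge x$. If $u=1$ then $a=1$, which is excluded. Hence $u\ne1$, so $u$ is nonconstant and $u\ge x\ge2$ eventually.

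The main subtlety is the minimal-length choice. Any term of $a$ whose top symbol is $\cdot$ or exponentiation with a trivial argument reduces to a strictly shorter term for the same germ. A shorter term for $a$ also cannot have top symbol $+$, since $a$ is irreducible, and it cannot be an atom, since $a\ne1,x$. So the minimal term has one of the two displayed forms with nontrivial arguments. The only input needed is the dichotomy that every Skolem function equals $1$ or is eventually $\ge x$. This needs care because a constant such as $2=1+1$ is nonconstant-free but not equal to $1$.

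So the claim "every nonconstant Skolem function is $\ge x$" must be complemented by handling constants $n\ge2$. A factor $u=n\ge2$ in a product makes $a=v+\dots+v$ reducible. A base $u=n\ge2$ is allowed, since the form only requires $u\ge2$. An exponent $v=n\ge2$ gives $a=u\cdot u^{n-1}$, a product of two shorter terms, so it is not minimal. These small case checks are the step I expect to need the most care.
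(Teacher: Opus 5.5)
Your route is the paper's. You use induction along the well-order of $\Sk$ for the first assertion, and a shortest term with a case analysis on its last operation for the second. The product case, the power cases $v=1$ and $u=1$, and the power case with nonconstant exponent are handled correctly.

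The step that fails is the constant exponent $v=n\ge2$. You claim that rewriting the term for $u^n$ as $u\cdot u^{n-1}$ shows it was not minimal. But the new term contains two copies of the term for $u$, so its length is $2|u|+|n-1|+2$ against $|u|+|n|+1$. In general it is longer, not shorter, so minimality gives no contradiction. Your summary claim, that the shortest term itself always has one of the two displayed forms with nontrivial arguments, is therefore not established.

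The repair is what the paper does. The lemma only asks that $a$ \emph{admit} one of the forms, so you should not seek a contradiction at all. Instead observe directly that $a=u\cdot u^{n-1}$ is of the product form with both factors at least $x$.

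That requires $u$ to be nonconstant, which you have not checked. Your initial dichotomy (every Skolem function is $1$ or eventually $\ge x$) is false. Your later patch (``a base $u=n\ge2$ is allowed'') only covers nonconstant exponents. The case where base and exponent are both constant is never addressed.

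The missing observation is that an additive irreducible other than $1$ is never constant, since $k=1+(k-1)$ for $k\ge2$. Hence when the exponent is constant, the base $u$ is nonconstant, so $u\ge x$ and $u^{n-1}\ge u\ge x$. This gives the product form.
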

\begin{proof}
For the first assertion, a least counterexample in the well-order
$\Sk$ would split as $u+v$ with $u,v$ smaller, and both would
already have finite decompositions.

For the second assertion, choose a shortest term representing the
irreducible. Its last operation cannot be addition. If it is a product,
neither factor is $1$. A constant factor $k\ge2$ would express the
product as a sum of $k$ equal Skolem functions, a contradiction.
Thus both factors are nonconstant and at least $x$.
If the last operation is $u^v$, trivial powers have already been
simplified, so $u,v\ge2$. For nonconstant $v$, we have $v\ge x$.
Otherwise $v=k\ge2$ is an integer. The base $u$ must be nonconstant,
since a constant result greater than $1$ is additively reducible.
Now $u^k=u\cdot u^{k-1}$ has both factors at least $x$.
\end{proof}

\begin{lemma}\label{sf:lem:subsequence}
Every sequence in a well-order has an infinite weakly increasing
subsequence. The same is true simultaneously for finitely many
sequences in well-orders.
\end{lemma}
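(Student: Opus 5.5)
The plan is the standard Ramsey-type argument, applied to the well-order rather than to an arbitrary linear order. Let $(a_n)_{n\in\N}$ be a sequence in a well-order $W$.

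Call an index $n$ a \emph{peak} if $a_m<a_n$ for every $m>n$. First I will show there are only finitely many peaks. If $n_0<n_1<\cdots$ were infinitely many peaks, then $a_{n_0}>a_{n_1}>\cdots$ would be an infinite strictly descending sequence in $W$. That is impossible in a well-order: the set $\{a_{n_i}\}$ would have no least element.

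So choose $N$ beyond every peak. I will build the subsequence recursively. Start with $i_0=N$. Given $i_k\ge N$, the index $i_k$ is not a peak, so there is some $i_{k+1}>i_k$ with $a_{i_{k+1}}\ge a_{i_k}$. This yields an infinite weakly increasing subsequence.

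An alternative route is to take $i_0$ with $a_{i_0}$ minimal among all terms. Then take $i_1>i_0$ with $a_{i_1}$ minimal among the terms of index $>i_0$, and so on. Minimality over the larger index set forces $a_{i_0}\le a_{i_1}\le\cdots$. This version uses well-foundedness directly and avoids the peak argument; either one suffices.

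For finitely many sequences $(a^{(1)}_n),\dots,(a^{(r)}_n)$ in well-orders $W_1,\dots,W_r$, I will induct on $r$. Apply the single-sequence result to $a^{(1)}$ to get an infinite index set $I_1$ on which $a^{(1)}$ is weakly increasing. Next apply it to the restriction of $a^{(2)}$ to $I_1$, which is again a sequence in a well-order. This gives an infinite $I_2\subseteq I_1$. Continue up to $I_r$.

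Weak monotonicity along an infinite index set passes to every infinite subset. Hence on $I_r$ all $r$ sequences are simultaneously weakly increasing.

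None of these steps is a real obstacle. The only point needing care is that the recursion is legitimate: each stage must produce an infinite index set. Finiteness of the number of peaks, or equivalently existence of minima in $W$, guarantees this.
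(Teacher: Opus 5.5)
Your proof is correct, and both of your single-sequence routes work.

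Your second route, choosing a least value in each successive tail, is essentially the paper's proof. The paper also splits off the case where some value occurs infinitely often, which gives a constant subsequence. Otherwise it jumps past the last occurrence of each tail minimum, so its subsequence is constant or strictly increasing. The lemma does not need that refinement, and your version rightly omits it.

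Your primary peak argument is a genuinely different route. It is the standard proof that every sequence in a linear order has a monotone subsequence. Well-foundedness enters only once: to exclude infinitely many peaks, which would produce a strictly decreasing subsequence. This isolates exactly where the well-order hypothesis is used. The tail-minimum construction instead invokes the least-element property at every step of the recursion.

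Your extension to finitely many sequences, by successively refining infinite index sets, is the same as the paper's.
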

\begin{proof}
If some value occurs infinitely often, take a constant subsequence.
Otherwise choose the least value in the remaining tail and one of
its occurrences, then pass beyond its last occurrence. Repeating
gives a strictly increasing subsequence. Apply this successively to
the finitely many coordinates.
\end{proof}

\subsection{Discreteness for all field exponents}\label{sf:sec:discrete}

The next theorem is a form of \cite[Theorem~11.1]{BM} in an
elementary extension of the real exponential field. Its proof
follows their least-reference and least-integer-bound argument.
We give the details using additive irreducibles, with an actual
Skolem function at every change of reference.

\begin{theorem}[Berarducci--Mamino discreteness in $F$]\label{sf:thm:discrete}
For every $Q\in\Sk$ and every $c\in F$ with $c\ge1$, the set
\[
 D_c(Q)=\{r\in\R_{>0}:(f/Q)^c=r+o(1)
                         \text{ for some }f\in\Sk\}
\]
is well ordered and has no accumulation point in $\R$.
In particular $D_c(Q)\cap(0,R]$ is finite for every real $R>0$,
and $|D_c(Q)|\le\omega$.
\end{theorem}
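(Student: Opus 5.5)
We argue by contradiction through a least-counterexample argument, after two reductions. First, it suffices to prove the finiteness claim: for every real $R>0$ the set $D_c(Q)\cap(0,R]$ is finite. This gives no accumulation point, and a subset of $\R_{>0}$ with finite intersection with every $(0,R]$ is well ordered of type at most $\omega$. So suppose $f_n\in\Sk$ give pairwise distinct values $r_n\in(0,R]$ with $(f_n/Q)^c=r_n+o(1)$.

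Second, we normalise $c$. If $c$ is finite, Lemma~\ref{sf:lem:scaled}(iv) and (iii) convert the hypothesis into a statement with exponent $1$: the values become $r_n^{1/\st(c)}$, still distinct and bounded. If $c$ is infinite, Lemma~\ref{sf:lem:scaled}(ii) turns it into
\[
 c\,(f_n-Q)/Q=\log r_n+o(1).
\]
This is an additive discreteness problem at the scale $Q/c$. By Lemma~\ref{sf:lem:subsequence} we pass to a subsequence along which $f_n$ is weakly increasing. Since the $r_n$ are distinct, all $f_n$ are distinct.

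Among all counterexamples, fix one with $Q$ least in the well order $\Sk$ (the least reference). Among counterexamples with that $Q$, take the least integer bound $N$ on the number of additive irreducibles in the decompositions of the $f_n$ given by Lemma~\ref{sf:lem:atoms}. First we bound this number. Every summand $a$ satisfies $a\ge1$ and $a\preceq Q$, with $(a/Q)^c$ bounded. Summands with $a\prec_c Q$, meaning $(a/Q)^c=o(1)$, affect the limit $r_n$ only through a bounded count; we show this separately using $\arch_c$-convexity from Lemma~\ref{sf:lem:scaled}(i). Hence a subsequence has a fixed number $N$ of summands. Thinning further, in each coordinate the summand either is eventually constant or has $\arch_c Q$-class behaviour along a weakly increasing subsequence. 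Removing a common eventually-constant summand, or splitting off a leading summand, gives a counterexample either with fewer summands or with the same data relative to a smaller reference. The latter case uses that if $(f_n/Q)^c$ converges then so does the ratio of each leading part. In both cases minimality is contradicted. This reduces us to $f_n=a_n$ additive irreducibles.

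For irreducible $a_n$ we split into the cases of Lemma~\ref{sf:lem:atoms}. The cases $a_n\in\{1,x\}$ give at most two values. In the product case $a_n=u_nv_n$ with $u_n,v_n\ge x$, we rewrite the problem as $(u_n/(Q/v_n))^c$. We want to replace $Q/v_n$ by an actual Skolem reference $Q'<Q$ in the same $\arch_c$-class, which the thinning allows once $v_n$ is eventually constant or comparable. Minimality of $Q$ then applies. In the power case $a_n=u_n^{v_n}$ we take logarithms: $c\,(v_n\log u_n-\log Q)$ converges to $\log r_n$. This is a comparison of $v_n\log u_n$ against $\log Q$ at scale $1/c$. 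We route it through a component of $Q$ of the form $A^b$ from (BM1), obtaining a comparison of $u_n^{v_n/b}$ with $A$ at the exponent $cb$. Here $A<Q$ is a genuine Skolem reference, and Lemma~\ref{sf:lem:scaled}(ii)--(iii) keep the limits real.

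The base case of the additive reductions is handled by Lemma~\ref{sf:lem:integers}. There $Q/c\arch1$, so bounded differences $f_n-Q$ are integers, giving discrete values $c(f_n-Q)/Q$. If instead $Q/c$ is infinite, dividing out yields again a finer reference.

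\textbf{Main obstacle.} The hardest step is the power case when $c$ is infinite (field-valued, non-real). There one must check that every change of reference really lands on an actual Skolem function strictly below $Q$, as the introduction to this subsection stresses. One must also check that the rescaled exponent $cb$ or $c\,v_n/b$ stays $\ge1$ and is uniform along the subsequence. To get this uniformity, I would first try thinning so that $v_n/b$ has a standard part, or is infinite, and then treating these two subcases with Lemma~\ref{sf:lem:scaled}(iii) and (ii) respectively.
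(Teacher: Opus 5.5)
Your proposal has a genuine gap in the second induction. You minimise the number of additive irreducibles in decompositions of the $f_n$, but you never show that this number is bounded along some counterexample, and the sketched reason fails. For infinite $c$, every summand $a$ with $\st(a/Q)<1$ has $(a/Q)^c=o(1)$. Yet summands $\arch Q/c$ shift the limit, while summands far below $Q/c$ (integer constants, say) can occur in unbounded number. Discarding the latter does not help: a growing number of scale-$Q/c$ summands could a priori be offset by a growing deficit of the large summands below $Q$, and ruling that out is essentially the theorem itself. Your claim that convergence of $(f_n/Q)^c$ passes to a leading part is also false relative to $Q$: if the remainder is $\succ Q/c$, the leading part has $(f_n'/Q)^c=o(1)$.

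The paper's second minimality concerns size, not length. With $A$ the least Skolem function $\arch Q$, it minimises the integer $N$ with $h_i\le NA$ over all counterexamples at $Q$, letting the exponent vary. It then splits $h_i=f_i+g_i$, with $f_i$ a maximal additive irreducible and $g_i$ an arbitrary Skolem remainder. Normalising at $h_0$, with $f_i,g_i$ weakly increasing, makes $c(f_i/f_0-1)$ and $(c/f_0)(g_i-g_0)$ nonnegative with bounded sum, so both have standard parts. The first is eventually constant, by reference induction or by the irreducible case. For the second, compare $g_0$ with $P=f_0/c$. If $g_0\preceq P$, the actual Skolem functions $g_k$ or $g_0$ serve as references $\prec Q$ with exponent $1$. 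If $g_0\succ P$, then $(g_i/g_0)^d=e^{b_i}+o(1)$ with the field exponent $d=(g_0/f_0)c$. Either $g_0<Q$ and reference induction applies, or $g_0\arch_c Q$. In the latter case $(g_i/Q)^c=R_*e^{\gamma b_i}+o(1)$ with $g_i\le(N-1)A$, contradicting the choice of $N$.

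The irreducible case has the same kind of gap. In the product case, $Q/v_n$ is neither a Skolem function nor independent of $n$, and nothing forces $v_n$ to be eventually constant. Normalise at $a_0$ instead. With $u_n,v_n$ weakly increasing, $(a_n/a_0)^c=(u_n/u_0)^c(v_n/v_0)^c$ is a product of two factors $\ge1$ with bounded product. So each factor gives a bounded weakly increasing sequence in $D_c(u_0)$, respectively $D_c(v_0)$, where $u_0,v_0\prec a_0\arch Q$; reference induction then applies to both.

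In the power case, routing through a component $A^b$ of $Q$ fails, because $Q$ is in general a sum of products and $u_n^{v_n/b}$ is not a Skolem function. The uniformity problem you flag as the main obstacle is resolved by $(a_n/a_0)^c\ge a_n/a_0\ge2^{v_n-v_0}$. This bounds $v_n-v_0$, so Lemma~\ref{sf:lem:integers} and monotonicity make $v_n$ eventually constant. Then $(a_n/a_0)^c=(u_n/u_0)^{v_0c}$, with the fixed field exponent $v_0c\ge1$ and the Skolem reference $u_0\prec Q$. This, rather than a base case with $Q/c\arch1$, is where Lemma~\ref{sf:lem:integers} is needed. It is also why the least-reference induction must range over all field exponents $c\ge1$ simultaneously.
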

\begin{proof}
The map $f\mapsto\st((f/Q)^c)$ on those $f$ with positive finite
standard part is weakly increasing. Its image is therefore well
ordered: for a nonempty subset of the image, choose the least
element of its inverse image in $\Sk$.

A well-ordered subset of $\R_{>0}$ has a positive minimum, and so
cannot accumulate at zero. If it has an accumulation point anywhere
else, it has a strictly increasing bounded sequence. Indeed an
infinite bounded subset has order type at least $\omega$; its first
$\omega$ elements give such a sequence. It suffices to rule these out.

Assume a counterexample exists. Choose the least $Q\in\Sk$ for
which a counterexample exists for \emph{some} $c\in F_{\ge1}$.
Thus, for every $P\in\Sk$ with $P<Q$ and every $d\in F_{\ge1}$,
a bounded weakly increasing sequence of positive elements of $D_d(P)$
is eventually constant. We refer to this as the reference induction.

First we record a consequence which does not use any further
minimality: no counterexample at $Q$ can consist entirely of additive
irreducibles. Suppose otherwise, writing
\[
 (a_i/Q)^c=r_i+o(1),\qquad r_0<r_1<\cdots<R.
\]
Then $a_i$ is strictly increasing and $a_i\arch Q$.
The exceptions $1,x$ can occur at most twice. By
Lemma~\ref{sf:lem:atoms} and a subsequence, all remaining $a_i$ have
one of its two forms, with both coordinate sequences weakly
increasing by Lemma~\ref{sf:lem:subsequence}. Replacing the reference
in the displayed ratios by $a_0$ divides every $r_i$ by $r_0$.

If $a_i=u_iv_i$ with $u_i,v_i\ge x$, then
\[
 (a_i/a_0)^c=(u_i/u_0)^c(v_i/v_0)^c.
\]
Both factors are at least $1$ and have bounded positive real standard
parts. Their standard parts are weakly increasing. As
$u_0,v_0\prec a_0\arch Q$, reference induction makes both
sequences eventually constant, contradicting strict increase of $r_i$.

If $a_i=u_i^{v_i}$ with $u_i\ge2$ and $v_i\ge x$, then
\[
 (a_i/a_0)^c\ge a_i/a_0\ge2^{v_i-v_0}.
\]
The left side has standard part uniformly bounded in $i$; enlarging
that real bound by $1$ gives a uniform bound in $F$ as well. Hence
$v_i-v_0$ is bounded by one integer. Lemma~\ref{sf:lem:integers}
and monotonicity show that $v_i$ is eventually constant. After
discarding an initial segment and renumbering, all $v_i=v_0$ and
\[
 (a_i/a_0)^c=(u_i/u_0)^{v_0c}.
\]
Here $u_0\prec Q$, and $v_0c\in F_{\ge1}$. Reference induction
again contradicts strict increase. This proves the assertion about
additive irreducibles for every exponent and every counterexample at $Q$.

We now choose a counterexample of the form
\begin{equation}
 (h_i/Q)^c=r_i+o(1),\qquad 0<r_0<r_1<\cdots<R,
 \label{sf:eq:witness}
\end{equation}
with an additional minimality condition. Let $A$ be the least Skolem
function Archimedean-equivalent to $Q$. For every counterexample
at $Q$, all its members satisfy $h_i\le NA$ for some common
positive integer $N$. To see this, if $h_i/Q\ge1$, then
$h_i/Q\le(h_i/Q)^c<R+1$; if $h_i/Q<1$ there is already such
a bound. Combine this with $Q=O(A)$. Choose the smallest $N$
among all counterexamples at $Q$, allowing the exponent $c$ to vary.
Fix one such witness. Passing to subsequences preserves its bound.

The element $Q$ is the least Skolem member of its $\arch_c$-class.
Otherwise a smaller reference $Q'\arch_c Q$ would multiply the
real sequence in \eqref{sf:eq:witness} by a fixed positive real constant,
contradicting minimality of $Q$.

Only finitely many $h_i$ can be additive irreducibles, by the assertion
already proved. Decompose every remaining $h_i$ into additive irreducibles,
choose one of maximal value and call it $f_i$, and let $g_i$ be the
sum of the others. Then
\begin{equation}
 h_i=f_i+g_i,\quad f_i,g_i\in\Sk,\quad
 f_i\text{ an additive irreducible},\quad f_i\arch h_i\arch Q.
 \label{sf:eq:split}
\end{equation}
The comparison $f_i\arch h_i$ follows because the decomposition
has a finite number of summands for each $i$; no uniform bound on
that number is used. Pass to a subsequence on which both $f_i$ and
$g_i$ are weakly increasing. In particular $f_i\ge A$.

If $c$ is finite, Lemma~\ref{sf:lem:scaled}(iv) changes the witness
to one with exponent $1$, preserving strict increase, boundedness,
and the integer bound $N$. Minimality of $Q$ now implies $A=Q$.
Let
\[
 a_i=\st(f_i/Q)>0,\qquad b_i=\st(g_i/Q)\ge0.
\]
Both sequences are weakly increasing and bounded, and their sum is
the strictly increasing ratio sequence. If all $b_i=0$, the
sequence $f_i$ contradicts the assertion about additive irreducibles.
Otherwise discard an initial segment so that every $b_i>0$.
Then $f_i,g_i\arch Q$, so both are at least $A=Q$, and
\[
 f_i,g_i\le(N-1)A.
\]
If either $a_i$ or $b_i$ were not eventually constant, a strictly
increasing subsequence would be a counterexample at $Q$ with smaller
integer bound. Therefore both are eventually constant, a contradiction.

It remains to treat infinite $c$. Normalize at $h_0$ and put
\[
 (h_i/h_0)^c=R_i+o(1),\qquad s_i=\log R_i.
\]
Then $R_i=r_i/r_0$, and $s_i$ is a strictly increasing bounded
sequence of nonnegative reals. Lemma~\ref{sf:lem:scaled}(ii), followed
by \eqref{sf:eq:split}, gives
\[
 c(f_i/f_0-1)+(c/f_0)(g_i-g_0)
       =s_i(1+g_0/f_0)+o(1).
\]
Both summands on the left are nonnegative and finite. Define
\begin{equation}
 a_i=\st\bigl(c(f_i/f_0-1)\bigr),\qquad
 b_i=\st\bigl((c/f_0)(g_i-g_0)\bigr),\qquad
 q=\st(g_0/f_0)\ge0.
 \label{sf:eq:ab}
\end{equation}
The two sequences are weakly increasing and bounded, and
\begin{equation}
 a_i+b_i=(1+q)s_i.\label{sf:eq:sum-ab}
\end{equation}

The sequence $a_i$ is eventually constant. Indeed
$(f_i/f_0)^c=e^{a_i}+o(1)$. If $f_0<Q$, use reference induction.
If $Q\le f_0$, then $Q\le f_0\le h_0\arch_c Q$ puts $f_0$
in the same $\arch_c$-class by convexity. Thus a nonconstant
$a_i$ would yield a counterexample at $Q$ consisting entirely of
the additive irreducibles $f_i$, which has been excluded.

Set $P=f_0/c$. Then $P\prec Q$, and the definition of $b_i$ gives
\begin{equation}
 g_i-g_0=b_iP+o(P).\label{sf:eq:remainder}
\end{equation}
We prove that $b_i$ is eventually constant by three cases.

If $g_0\prec P$, then $g_i/P=b_i+o(1)$. If every $b_i=0$
there is nothing to prove. Otherwise choose $k$ with $b_k>0$.
For $i\ge k$ the actual Skolem reference $g_k$ satisfies
\[
 g_k\sim b_kP\prec Q,\qquad
 g_i/g_k=b_i/b_k+o(1).
\]
Reference induction with exponent $1$ proves the assertion.
If $g_0\arch P$, write $g_0/P=t+o(1)$ with $t>0$ real.
Then $g_0\prec Q$ and
$g_i/g_0=1+b_i/t+o(1)$, so the same induction applies to $g_0$.
In neither case is $P$ used as a Skolem reference.

Finally suppose $g_0\succ P$, and set $d=g_0/P=(g_0/f_0)c$.
This is an infinite element of $F$. Dividing \eqref{sf:eq:remainder}
by $g_0$ and using Lemma~\ref{sf:lem:scaled}(ii) yields
\[
 (g_i/g_0)^d=e^{b_i}+o(1).
\]
If $g_0<Q$, reference induction applies. Otherwise
$Q\le g_0\le h_0\arch_c Q$, so $g_0\arch_c Q$.
Moreover
\[
 Q\le g_0\le h_0\arch Q,\qquad f_0\arch h_0\arch Q,
\]
so $g_0$ and $f_0$ are both Archimedean-equivalent to $Q$.
Thus $g_0\arch f_0$, and $c/d=\gamma+o(1)$ for some
$\gamma\in\R_{>0}$. Lemma~\ref{sf:lem:scaled}(iii) gives
\[
 (g_i/Q)^c=R_*e^{\gamma b_i}+o(1)
\]
for a fixed $R_*>0$ real. As $f_i\ge A$ and $h_i\le NA$,
we have $g_i\le(N-1)A$. A nonconstant $b_i$ would therefore
contradict the choice of $N$. This completes all three cases.

Both sequences on the left of \eqref{sf:eq:sum-ab} are eventually
constant, whereas its right side is strictly increasing. The
contradiction proves the theorem. Finiteness in each bounded real
interval follows from compactness, including the exclusion of
accumulation at zero, and implies the order-type assertion.
\end{proof}

\begin{remark}
The proof is simultaneous in all $c\in F_{\ge1}$. Restricting the
induction to $c=1$ would not justify the exponents $v_0c$ and
$(g_0/f_0)c$ occurring above. Both are ordinary field elements;
no enlargement of the field is made during the proof.
\end{remark}

\section{The field of logarithmic-exponential series}\label{sf:sec:series}

Take for $F$ the standard field $\T$ of well-based
logarithmic-exponential series over $\R$. Its formal variable $x$
is positive infinite. The construction and the following facts are
classical \cite{DMM97,DMM01}.

\begin{enumerate}[label=\textup{(LE\arabic*)},leftmargin=4.2em]
\item $\T$ is an elementary extension of $\R_{\exp}$, with its
standard exponential and logarithm. The canonical interpretation of
Skolem germs fixes $\R$ and sends the identity germ to formal $x$.
See \cite[Corollary~2.8, (2.11), and (3.10)]{DMM97}, or
\cite[(3.11)--(3.12)]{DMM01}.
\item $\T$ has an ordered monomial group $\mathfrak M$, and every
element has a unique Hahn expansion with real coefficients and
reverse-well-ordered support in $\mathfrak M$. Distinct monomials
belong to distinct Archimedean classes. The monomials $x$ and
$\log x$ belong to $\mathfrak M$.
\item For $y=\log x$, there are increasing monomial groups
$\mathfrak M_h(y)$ and full Hahn fields
\[
 T_h(y)=\R((\mathfrak M_h(y)))\subseteq\T,\qquad
 \mathfrak M_0(y)=y^{\R},
\]
with
\[
 \exp(T_h(y)^\up)\subseteq\mathfrak M_{h+1}(y).
\]
Here $V^\up$ denotes the additive subspace of series supported
strictly above $1$; it includes $0$ and series of either sign.
More explicitly, in the notation of \cite[(2.7)--(2.8)]{DMM01},
we take
\[
 T_h(y)=L_{1,h},\qquad \mathfrak M_h(y)=G_{1,h}.
\]
The transport isomorphism $\eta_1^{-1}$ sends the original
variable $x$ to $y=\log x$, and transports DMM's finite Hahn
fields and their monomial groups to these levels. The exponential
inclusion follows from \cite[(2.1) and Lemma~2.2]{DMM01}
under the same transport.
\item The expansions and their restrictions to subfamilies of the
support agree under the embeddings of finite levels. A Hahn sum is
formed inside a common finite level. For an infinitesimal
$\epsilon$ in such a Hahn field, the families defining
\[
 \exp\epsilon=\sum_{j\ge0}\frac{\epsilon^j}{j!},\qquad
 \log(1+\epsilon)=\sum_{j\ge1}\frac{(-1)^{j+1}}j\epsilon^j
\]
are summable there and give the exponential and logarithm of $\T$.
See \cite[\S1 and (2.4), (2.7)--(2.8)]{DMM01}.
\end{enumerate}

We recall the meaning of summable. The union of supports must be
reverse well ordered, and a fixed monomial must occur in only
finitely many members of the family. Each coefficient of the sum is
then a finite real sum. The usual Hahn--Neumann lemma says that if
$S$ is reverse well ordered and every member of $S$ is below $1$,
its finite products form a reverse-well-ordered set, with finitely
many multiset representations of each monomial. This justifies the
displayed power series in a common Hahn field. We do not close
$\T$ or an increasing union of its subfields under arbitrary sums
across unbounded levels.

\section{The special Hahn subfield for Skolem functions}\label{sf:sec:subfield}

Inside $\T$ define, recursively,
\begin{equation}
 \begin{gathered}
 G_0=x^{\Z},\qquad K_n=\R((G_n)),\\
 G_{n+1}=\{\exp(\gamma)x^{\vartheta+k}:
                  \gamma,\vartheta\in K_n^\up,\ k\in\Z\},\\
 G=\bigcup_{n\ge0}G_n,\qquad K=\bigcup_{n\ge0}K_n.
 \end{gathered}\label{sf:eq:K}
\end{equation}
This is the form of the construction in \cite[Definition~9.1]{BM},
now carried out using (LE1)--(LE4). We verify that all full Hahn
fields in \eqref{sf:eq:K} exist inside $\T$.

\begin{lemma}[Uniform finite levels]\label{sf:lem:levels}
For every $n\ge0$, with $y=\log x$,
\[
 G_n\subseteq\mathfrak M_{n+1}(y),\qquad
 K_n\subseteq T_{n+1}(y).
\]
The groups $G_n$ and the fields $K_n$ increase with $n$. Every
restriction of the support of an element of $K_n$ is again in $K_n$.
\end{lemma}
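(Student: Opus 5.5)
The plan is a simultaneous induction on $n$: at each stage the group containment is proved first, and the field containment is then deduced from it using the Hahn structure recorded in (LE2)--(LE4).

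\emph{Base case.} We have $G_0=x^{\Z}$. Under the transport in (LE3), $x=\exp(y)$. Since $y\in T_0(y)^\up$ (its support $\{y\}$ lies above $1$), (LE3) gives $x\in\mathfrak M_1(y)$, and hence $x^{\Z}\subseteq\mathfrak M_1(y)$ because the latter is a group. A Hahn series over $G_0$ has reverse-well-ordered support inside the ordered group $\mathfrak M_1(y)$. By (LE4), Hahn sums formed inside a common finite level agree with those of $\T$, so $K_0=\R((G_0))\subseteq\R((\mathfrak M_1(y)))=T_1(y)$.

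\emph{Inductive step.} Assume $G_n\subseteq\mathfrak M_{n+1}(y)$ and $K_n\subseteq T_{n+1}(y)$, and take $\gamma,\vartheta\in K_n^\up$ and $k\in\Z$.
\begin{enumerate}[label=(\alph*),leftmargin=2.5em]
\item Since $\gamma\in T_{n+1}(y)^\up$, (LE3) gives $\exp\gamma\in\mathfrak M_{n+2}(y)$.
\item Next, $x^{\vartheta+k}=\exp((\vartheta+k)y)$. The product $\vartheta y$ lies in $T_{n+1}(y)$, because $y\in T_0(y)\subseteq T_{n+1}(y)$ by monotonicity of the levels. Every monomial in its support is $\mathfrak m\,y$ with $\mathfrak m>1$ and $y>1$, so $\vartheta y\in T_{n+1}(y)^\up$. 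The same holds for $ky$.
\item Applying (LE3) again gives $x^{\vartheta+k}\in\mathfrak M_{n+2}(y)$.
\end{enumerate}
Since $\mathfrak M_{n+2}(y)$ is a group, $G_{n+1}\subseteq\mathfrak M_{n+2}(y)$. The field statement $K_{n+1}\subseteq T_{n+2}(y)$ then follows exactly as in the base case.

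\emph{Monotonicity.} We show $G_n\subseteq G_{n+1}$ by induction.
\begin{enumerate}[label=(\alph*),leftmargin=2.5em]
\item For $n=0$: $x^k=\exp(0)x^{0+k}$, with $0\in K_0^\up$, so $G_0\subseteq G_1$.
\item Assume $G_{n-1}\subseteq G_n$. Then $K_{n-1}\subseteq K_n$ as Hahn fields. The expansion in the larger field agrees with the original one by (LE4), so the support condition defining the ``up'' part is preserved and $K_{n-1}^\up\subseteq K_n^\up$. Therefore every generator $\exp(\gamma)x^{\vartheta+k}$ of $G_n$ also lies in $G_{n+1}$.
\end{enumerate}
Finally, $K_n=\R((G_n))$ is a full Hahn field. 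A restriction of the support of one of its elements is again a reverse-well-ordered subset of $G_n$ carrying real coefficients, so it defines an element of $K_n$. By (LE4), this element coincides with the corresponding restriction computed in $\T$.

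The main obstacle is justifying that $G_n$ is really a subgroup, so that $\R((G_n))$ is a well-defined Hahn field. Closure under products and inverses follows from additivity of $K_n^\up$ together with $\exp(a)\exp(b)=\exp(a+b)$ and $x^ax^b=x^{a+b}$. The more delicate point is the identification of $K_n$, defined abstractly as $\R((G_n))$, with a subfield of $T_{n+1}(y)$. Here we need that $G_n$ inherits its order from $\mathfrak M_{n+1}(y)$, and that distinct monomials lie in distinct Archimedean classes by (LE2). With these two facts, the inclusion of monomial groups extends to an inclusion of Hahn fields that respects sums.
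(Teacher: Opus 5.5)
Your proposal is correct and follows essentially the same route as the paper: induction using (LE3) on purely large exponents built from $\gamma$, $\vartheta y$ and $ky$, the group property by adding exponents, nesting via $\gamma=\vartheta=0$ and $K_{n-1}\subseteq K_n$, and support restriction via reverse well-ordering. The only cosmetic difference is that you exponentiate $\gamma$ and $(\vartheta+k)y$ separately and multiply inside the group $\mathfrak M_{n+2}(y)$, whereas the paper exponentiates the single sum $\gamma+(\vartheta+k)y\in T_{n+1}(y)^\up$.
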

\begin{proof}
The base follows from $x^k=\exp(ky)$ and (LE3). If the assertion
holds for $n$, then
\[
 \gamma+(\vartheta+k)y\in T_{n+1}(y)^\up
 \qquad(\gamma,\vartheta\in K_n^\up).
\]
Indeed all monomials of $\gamma$ and $\vartheta$ are greater than
$1$, and multiplication by the monomial $y>1$ preserves that
property. Thus each element of $G_{n+1}$ is a monomial in
$\mathfrak M_{n+2}(y)$. A reverse-well-ordered support in the
subgroup $G_{n+1}$ is still such a support in this latter group.
The full Hahn field on $G_{n+1}$ is consequently a subfield of
the single full Hahn field $T_{n+2}(y)$.

Each $G_{n+1}$ is a group: multiply by adding $\gamma,\vartheta,k$,
and invert by changing their signs. The base group is contained in
$G_1$ by setting $\gamma=\vartheta=0$. Inductively the previous
formulas for elements of $G_n$ remain available because
$K_{n-1}\subseteq K_n$. This proves nesting of groups and fields.
Restriction of a Hahn support preserves reverse well ordering, so
remains in the same $K_n$.
\end{proof}

\begin{lemma}[Monomial gap]\label{sf:lem:gap}
The least monomial of $G$ strictly greater than $1$ is $x$.
More precisely, each $g\in G$ is either $x^k$ for some $k\in\Z$,
or satisfies $|\log g|\succ\log x$.
\end{lemma}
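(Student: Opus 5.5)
Every $g\in G$ lies in some $G_n$, so $g=\exp(\gamma)x^{\vartheta+k}$ with $\gamma,\vartheta\in K_{n-1}^\up$ and $k\in\Z$; for $n=0$ we simply have $g=x^k$. Taking logarithms gives
\[
 \log g=\gamma+(\vartheta+k)\log x .
\]
The plan is a case split. Either $\gamma=\vartheta=0$, and then $g=x^k$. Or $\gamma+\vartheta\log x$ is a nonzero element whose leading monomial is strictly above $\log x$ in the Archimedean sense. In the second case $|\log g|\succ\log x$, since adding $k\log x$ cannot cancel a term of strictly larger class. The first assertion then follows at once. If $g>1$ and $g\ne x^k$, then $\log g$ is positive and $\log g\succ\log x$, so $g\succ x^N$ for every $N$ and in particular $g>x$. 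If $g=x^k>1$, then $k\ge1$ and $g\ge x$. So $x$ is the least monomial above $1$.

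The key step is to show that every nonzero $\delta\in K_{m}^\up$, for any $m$, satisfies $|\delta|\succeq x$, i.e.\ its leading monomial is at least $x$. I would prove this by induction on $m$, simultaneously with the gap statement for $G_m$. For $K_0$ it is clear, because $G_0=x^\Z$ and the monomials above $1$ are $x^k$ with $k\ge1$. Now assume the gap statement holds for $G_m$. The support of $\delta\in K_m^\up$ consists of monomials $\mathfrak m\in G_m$ with $\mathfrak m>1$, and the leading monomial governs the Archimedean class by (LE2). Each such $\mathfrak m$ is either some $x^k$ with $k\ge1$, or satisfies $\log\mathfrak m\succ\log x$, in which case $\mathfrak m>x$. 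Either way the leading monomial is at least $x$, so $|\delta|\succeq x$.

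Given this, in the second case of the split the element $\gamma+\vartheta\log x$ is nonzero. If $\vartheta\ne0$, then $|\vartheta\log x|\succeq x\log x\succ\log x$. If $\vartheta=0$ and $\gamma\neq0$, then $|\gamma|\succeq x\succ\log x$. When both are nonzero, cancellation between $\gamma$ and $\vartheta\log x$ is the one delicate point, and it is the step I expect to need care. Their sum is $\log$ of the monomial $g x^{-k}$, and by (LE3) applied in a common finite level $T_{h}(y)$ (Lemma~\ref{sf:lem:levels}) it is a Hahn series supported on monomials of $\mathfrak M_h(y)$. I would argue that this sum is a nonzero series all of whose monomials lie strictly above $1$. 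Each of its terms is a product of a monomial above $1$ with a monomial of $\vartheta$ or with $\log x=y$, and at least one term survives cancellation because $\gamma$ and $\vartheta\log x$ are not negatives of each other. I would justify that last point by writing $\gamma=-\vartheta\log x$ and noting that $\gamma\in K_{n-1}$ is supported in $G_{n-1}$, while $\vartheta y$ has monomials $\mathfrak m y$ with $\mathfrak m\in G_{n-1}$. Every $G_{n-1}$ monomial has the form $\exp(\cdot)x^{\cdot}$, and $y$ does not belong to $G_{n-1}$ times such forms: by induction, logarithms of $G_{n-1}$ monomials are $x^k$-type or infinite-exponent-type and never equal $\log y$ modulo them. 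That contradiction gives $|\gamma+\vartheta\log x|\succeq x$, and hence $|\log g|\succ\log x$.

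The main obstacle is exactly this noncancellation: showing that $\log x$ times a $G_{n-1}$ monomial is never a $G_{n-1}$ monomial. I would first try to prove that $y\notin G$, using uniqueness of Hahn expansions in $\T$ and comparing $\log$ of both sides inductively.
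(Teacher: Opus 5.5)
Your proposal is correct and follows the paper's proof. The paper also runs a simultaneous induction in which the gap at level $n$ makes every monomial of $K_n^{\uparrow}$ at least $x$. Disjointness of the cosets $G_n$ and $yG_n$ (with $y=\log x$) then rules out cancellation between $\gamma$ and $y\vartheta$, and $ky$ cannot cancel a leading monomial that is at least $x$.

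The step you flag as the main obstacle is immediate in the paper. Since $1<y<x$, the induction hypothesis (no monomial of $G_n$ lies strictly between $1$ and $x$) already gives $y\notin G_n$. So no separate Hahn-uniqueness argument or proof of $y\notin G$ is needed.
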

\begin{proof}
The assertions hold in $G_0$. Suppose the gap statement holds for
$G_n$, and put $y=\log x$. Since $1<y<x$, we have $y\notin G_n$,
and the cosets $G_n$ and $yG_n$ are disjoint. For
\[
 A=\gamma+y\vartheta+ky,\qquad\gamma,\vartheta\in K_n^\up,
\]
the supports of $\gamma$ and $y\vartheta$ are disjoint. All
their monomials are at least $x$, or at least $xy$, respectively.
Consequently, if either series is nonzero, their sum has a nonzero
leading monomial at least $x$, which cannot be cancelled by $ky$.
It follows that $|A|\succ y$. If $\exp A>1$, this implies
$A>0$ and $\exp A>x$. If both series vanish, then
$\exp A=x^k$, which is greater than $1$ exactly when $k\ge1$.
This proves both assertions at the next stage and hence in the union.
\end{proof}

\begin{lemma}[Exponential closure]\label{sf:lem:expclosure}
The field $K$ is closed under exponential and under restriction of
supports. For every positive $a\in K$,
\[
 \log a\in K+K\log x.
\]
\end{lemma}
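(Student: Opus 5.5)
Restriction of supports is already part of Lemma~\ref{sf:lem:levels}: an element of $K$ lies in some $K_n$, and every restriction of its support stays in $K_n$. The two remaining claims are handled at a fixed finite level: closure under $\exp$, and the form of $\log a$ for positive $a\in K$. In both, an element is split into its infinite, constant and infinitesimal parts. For $a\in K_n$ we write
\[
 a=a^\up+a_0+a^\dn,
\]
with $a^\up\in K_n^\up$, $a_0\in\R$, and $a^\dn$ supported strictly below $1$. Each piece is a support restriction and so lies in $K_n$.

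\emph{Exponential.} Here $\exp a=\exp(a^\up)\,e^{a_0}\,\exp(a^\dn)$. The middle factor is a positive real constant. The first factor is $\exp(\gamma)x^{\vartheta+k}$ with $\gamma=a^\up$, $\vartheta=0$, $k=0$, so it is a monomial in $G_{n+1}$. For the last factor, $a^\dn$ is infinitesimal in the full Hahn field $K_n$. The series $\sum_j (a^\dn)^j/j!$ is summable in $K_n$ by the Hahn--Neumann lemma, and by (LE4) its sum is the exponential of $\T$. Because $K_n\subseteq T_{n+1}(y)$, the sum is formed inside a common finite level, so it is the same element of $\T$ computed in $K_n$. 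Hence $\exp(a^\dn)\in K_n$, and the product lies in $K_{n+1}$.

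\emph{Logarithm.} Take $a>0$ in $K_n$ and write $a=c\,\mathfrak m\,(1+\epsilon)$, where $\mathfrak m\in G_n$ is the leading monomial, $c>0$ is real, and $\epsilon$ is infinitesimal in $K_n$. Then $\log a=\log c+\log\mathfrak m+\log(1+\epsilon)$. The logarithmic series for $\log(1+\epsilon)$ is summable in $K_n$, and by (LE4) it represents the logarithm of $\T$, so $\log(1+\epsilon)\in K_n$. It remains to compute $\log\mathfrak m$. If $n=0$, then $\mathfrak m=x^k$ and $\log\mathfrak m=k\log x$. If $n\ge1$, then $\mathfrak m=\exp(\gamma)x^{\vartheta+k}$ with $\gamma,\vartheta\in K_{n-1}^\up$. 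Here $\log x^{\vartheta+k}$ must be read as $(\vartheta+k)\log x$, using $u^v=\exp(v\log u)$ from Section~\ref{sf:sec:field}. This gives $\log\mathfrak m=\gamma+(\vartheta+k)\log x\in K+K\log x$.

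The main obstacle is the bookkeeping: we must show that each symbolic identity in \eqref{sf:eq:K} is an identity in $\T$, and that nothing depends on which $G_n$ a monomial is written in. A monomial may have representations at different levels. This is harmless, since the logarithm in $\T$ is single-valued and every representation gives $\log\mathfrak m$ in the required form. The point to check carefully is that the power series manipulations are performed inside one full Hahn field, with the restrictions recorded after (LE4). This is guaranteed by the uniform inclusion $K_n\subseteq T_{n+1}(y)$ of Lemma~\ref{sf:lem:levels} together with (LE4). Multiplicativity of $\exp$ in $\T$, from (LE1), then justifies the factorisations used above.
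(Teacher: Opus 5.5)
Your proof is correct and follows the paper's argument essentially step for step. It uses the same splitting $f=f^\up+f^0+f^\dn$, with $\exp(f^\up)\in G_{n+1}$ and the power series in $K_n$ by (LE4), and the same factorisation $a=r\,m(1+\epsilon)$, giving $\log a=\gamma+(\vartheta+k)\log x+\log r+\log(1+\epsilon)$; your separate treatment of $n=0$ is just an explicit version of the paper's step of raising the stage so that $m$ takes the form $\exp(\gamma)x^{\vartheta+k}$.
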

\begin{proof}
Write $f\in K_n$ as $f=f^\up+f^0+f^\dn$, according as its
support is above, at, or below $1$. Then
\[
 \exp f=\exp(f^\up)e^{f^0}
                      \sum_{j\ge0}(f^\dn)^j/j!\in K_{n+1}.
\]
The first factor belongs to $G_{n+1}$, and the power series lies
in $K_n$. Support restriction was proved in Lemma~\ref{sf:lem:levels}.

Write $a=r m(1+\epsilon)$ with $r>0$ real, $m\in G$ and
$\epsilon\prec1$ in one $K_n$. The definition of $G$ gives
$m=\exp(\gamma)x^{\vartheta+k}$ at some finite stage. Increase
the common stage to include $a,m,\gamma,\vartheta$. By (LE4),
$q=\log(1+\epsilon)\in K$. Thus
\begin{equation}
 \log a=\gamma+(\vartheta+k)\log x+\log r+q
                            \in K+K\log x.\label{sf:eq:logsupport}
\end{equation}
The field $K$ itself is not logarithmically closed:
$\log x\notin K$ by Lemma~\ref{sf:lem:gap}.
\end{proof}

\begin{theorem}[Positive support of Skolem functions]\label{sf:thm:Sk-support}
The interpretation of every Skolem function in $\T$ belongs to
$K^\up+\N$. Thus it has no monomials below $1$, and its
constant term is a nonnegative integer.
\end{theorem}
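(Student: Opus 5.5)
We argue by induction on the term construction of Skolem functions. Let $\mathcal P=K^\up+\N$; the goal is $\Sk\subseteq\mathcal P$. The base cases are immediate: $1\in\N$, and $x\in G_0\subseteq K$ has support $\{x\}$ above $1$, so $x\in K^\up$. It then suffices to show that $\mathcal P$, or rather its positive part that actually contains Skolem functions, is closed under addition, multiplication and exponentiation $u^v$ with $u\ge 1$.

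Sums and products are routine. $K^\up$ is an additive group. The product of two series supported strictly above $1$ is again supported strictly above $1$, since the monomial group is ordered and products of monomials $>1$ are $>1$. Hence $(a+k)(b+l)=ab+la+kb+kl$ lies in $K^\up+\N$. Supports stay in a common $K_n$ by Lemma~\ref{sf:lem:levels}.

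The real work is exponentiation $u^v=\exp(v\log u)$ with $u,v\in\Sk$. If $u=1$, then $u^v=1$. If $v=k\in\N$, then $u^v$ is a product, already handled. So assume $u\ge2$ and $v\ge x$ nonconstant. Write $u=a+k$ with $a\in K^\up$ and $k\in\N$.

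\emph{Constant base.} If $a=0$, then $u=k\ge2$ and $u^v=\exp(v\log k)$ with $v\log k\in K^\up+\R$. Write $v=b+l$ with $b\in K^\up$. Then $u^v=\exp(b\log k)\,k^l$, where $\exp(b\log k)\in G_{n+1}$ is a monomial. This monomial is $>1$, because $b>0$ as $v\ge x$ forces the leading part to be positive. So $u^v\in K^\up$, as a positive real multiple of a monomial $>1$.

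\emph{Nonconstant base.} If $a\neq0$, let $m=\lm(u)$ with coefficient $r>0$, so $m\ge x$ by Lemma~\ref{sf:lem:gap}. Write $u=rm(1+\epsilon)$ with $\epsilon\prec1$. By \eqref{sf:eq:logsupport},
\[
 \log u=\gamma+(\vartheta+j)\log x+\log r+q ,
\]
where $m=\exp(\gamma)x^{\vartheta+j}$ and $q=\log(1+\epsilon)$ is infinitesimal. Then
\[
 u^v=\exp\bigl(v\gamma\bigr)\,x^{v(\vartheta+j)}\,r^{v}\exp(vq).
\]
We organise this by splitting $v\log u=\Phi^\up+\Phi^0+\Phi^\dn$ within $K+K\log x$. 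The parts $v\gamma$ and $v\cdot(\vartheta+j)$ have the shape needed for $G_{n+1}$: the exponent of $x$ must be of the form $\vartheta'+k'$ with $\vartheta'\in K^\up$. We have $v(\vartheta+j)\in K^\up+K^0+K^\dn$ up to the constant part, and since $v=b+l$ with $l\in\N$ and $j\in\Z$, the infinitesimal pieces of $v(\vartheta+j)$ come only from products. Here $v\vartheta$ and $vj$ lie in $K^\up+\Z$ except for terms of $v\vartheta$ below $1$, which do not occur because both factors have no monomials below $1$. Likewise $v\log r$ lies in $K^\up+\R$, and $vq$ may have parts of all three kinds. Collecting the pieces, we get $u^v=M\cdot c\cdot\exp(\delta)$ with $M\in G_{n+1}$, $c>0$ real, and $\delta$ infinitesimal. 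Hence $u^v\in K$, because $\exp\delta$ is a power series in $K$ by (LE4).

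The main obstacle is showing that the result has no monomials below $1$ and an integer constant term. My first approach is to avoid computing the tail directly and instead use Lemma~\ref{sf:lem:integers}, the key integrality input. Let $f\in\Sk$ be known to lie in $K$, and let $f^{\le}$ denote its part at or below $1$, which is bounded, i.e.\ $O(1)$. Suppose $f=\sum f_i$ is built as above. I would proceed by induction on the well-order $\Sk$. Take $f$ least violating the statement. By Lemma~\ref{sf:lem:atoms} it is an additive irreducible of the form $uv$ or $u^v$, and by the closure computations the only potential violators are powers $u^v$ with nonconstant base. For these I would show directly that $u^v$ equals its leading monomial part times $\exp$ of a series with no part below $1$. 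This holds if $v\log u$ has no infinitesimal part, but $vq$ can have one, so direct computation is delicate.

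If that fails, I would instead compare $u^v$ with the truncation $T=(u^v)^\up$. I expect to realise $T$ approximately by Skolem functions, which I find doubtful, or else to use the fact that $u^v-T$ is $O(1)$ together with discreteness, Theorem~\ref{sf:thm:discrete}, to pin down the bounded part. The cleanest route I would try first is an inductive invariant: the infinitesimal parts $\Phi^\dn$ multiply a monomial $M\ge x$ of height exceeding every monomial of $\Phi^\dn{}^{-1}$. Then every monomial of $M\exp(\Phi^\dn)$ is still $>1$, giving $u^v\in K^\up$ with zero constant term.
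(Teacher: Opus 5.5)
Your handling of $1$, $x$, sums, products, integer exponents and constant bases is fine. Your factorization $u^v=M\,c\,\exp\delta$, with $M\in G$, $c>0$ real and $\delta\prec1$, is essentially the paper's \eqref{sf:eq:power-factor}. But that factorization only shows $u^v\in K$. The actual content of the theorem in the nonconstant-base case is that \emph{no} monomial of $u^v$ lies at or below $1$, and this is left unproved: you list three strategies without carrying any of them out. The least-counterexample route just restates the case to be proved. The integrality and discreteness results (Lemma~\ref{sf:lem:integers}, Theorem~\ref{sf:thm:discrete}) compare Skolem functions with Skolem references, and the truncation $(u^v)^\up$ is not known to be one, as you note yourself. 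Your preferred invariant is also too weak. Knowing $M\mathfrak n>1$ for each monomial $\mathfrak n$ of $\Phi^\dn$ does not control $M\exp(\Phi^\dn)$, whose support contains arbitrary finite products $\mathfrak n_1\cdots\mathfrak n_k$. For example, $M=e^x$ and $\mathfrak n=xe^{-x}$, both in $G_1$, give $M\mathfrak n>1>M\mathfrak n^2$. You also give no reason why even this invariant holds.

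The missing idea is a lower bound by powers of $m=\lm(u)$, together with the fact that $M$ beats every such power. Write the exponent as $v=t+l$ with $t\in K^\up$ (your $b$) and $l\in\N$, and treat $u^t$ first.
\begin{itemize}
\item Because $u$ has support at least $1$, every monomial of $\epsilon=u/(rm)-1$ is at least $m^{-1}$.
\item Each monomial of $q=\log(1+\epsilon)$ arises in some $\epsilon^j$, so it is at least $m^{-j}$ for some finite $j$.
\item The same holds for $tq$, since the monomials of $t$ lie above $1$, and hence for its infinitesimal part $\delta$.
\item Consequently every monomial of $\exp\delta$ is at least $m^{-N}$ for some finite $N$, depending on the monomial.
\end{itemize}
On the other side, let $s^\up$ be the purely large part of $tq$. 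Then $\lm(s^\up)=\lm(t)\lm(q)<\lm(t)$, so $s^\up=o(t)$. Since $\log m$ is infinite,
\[
 \log(M/m^N)=(t-N)\log m+t\log r+s^\up>0
\]
for every finite $N$. Thus $M/m^N>1$ for all $N$, and every monomial of $Me^{s^0}\exp\delta$ exceeds $1$. This gives $u^t\in K^\up$, and then $u^{t+l}=u^tu^l\in K^\up$. This is the step your proposal needs in place of the tentative invariant.
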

\begin{proof}
The positive elements of $K^\up+\N$ contain $1,x$ and are
closed under addition and multiplication, by the Hahn support
rules. We prove closure under $a^b$ for positive
$a,b\in K^\up+\N$. Write $b=t+n$, where $n\in\N$ and
$t\in K^\up$. If $t=0$, the power is an ordinary positive
integer power. Otherwise positivity of $b$ implies $t>0$, and
$t$ is infinite. It suffices to prove $a^t\in K^\up$, except
for the harmless case $a=1$.

If $a=k\ge2$ is an integer, then
$a^t=\exp(t\log k)\in G$ is an infinite monomial. Suppose now
$a$ is nonconstant, and write
\[
 a=r m(1+\epsilon),\quad r>0\text{ real},\quad m\in G,
 \quad m>1,\quad\epsilon\in K,\quad\epsilon\prec1.
\]
Because $a$ has support at least $1$, every monomial of
$\epsilon$ is at least $m^{-1}$. Put
\[
 q=\log(1+\epsilon),\qquad tq=s^\up+s^0+\delta,
 \quad s^0\in\R,\quad\delta\prec1.
\]
These elements belong to a common finite Hahn field. Every monomial
of $q$ is at least $m^{-j}$ for some finite $j$, depending on that
monomial: it arises in one of the powers $\epsilon^j$.
The monomials of $t$ are greater than $1$, so the same assertion
holds for the monomials of $tq$ and of its restriction $\delta$.
It follows that every monomial of $\exp\delta$ is at least
$m^{-N}$ for some finite $N$, depending on the monomial. This
includes the constant monomial, with $N=0$.

The exact factorisation of the power is
\begin{equation}
 a^t=M e^{s^0}\exp\delta,\qquad
 M=\exp(t\log r)m^t\exp(s^\up).\label{sf:eq:power-factor}
\end{equation}
Here $M\in G$. For the middle factor, if
$m=\exp(\gamma)x^{\vartheta+k}$, then
\[
 m^t=\exp(t\gamma)x^{t\vartheta+kt}\in G,
\]
because $t\gamma$ and $t\vartheta+kt$ are purely large.
The other two factors defining $M$ also belong to $G$.

Since $q\prec1$, we have $tq=o(t)$. To see explicitly that
also $s^\up=o(t)$, the case $s^\up=0$ is immediate.
Otherwise $q\ne0$, and the purely large part retains the leading
monomial of $tq$. Hence
\[
 \lm(s^\up)=\lm(tq)=\lm(t)\lm(q)<\lm(t),
\]
because $\lm(q)<1$. Distinct monomials have distinct
Archimedean classes, so this gives $s^\up=o(t)$.
For every fixed integer $N\ge0$,
\[
 \log(M/m^N)
    =(t-N)\log m+t\log r+s^\up>0.
\]
Indeed the first term is positive and asymptotic to $t\log m$;
$\log m$ is infinite, so this term dominates the other two.
Consequently $M/m^N>1$. Every monomial of
\eqref{sf:eq:power-factor} is therefore greater than $1$, by the
lower bounds for the monomials of $\exp\delta$. The entire
expression lies in $K$, so $a^t\in K^\up$. Finally $a^n$ has
support at least $1$, and $a^{t+n}=a^t a^n\in K^\up$.
This proves closure under powers and the theorem by induction on
Skolem terms.
\end{proof}

\begin{remark}
The factor $\exp(s^\up)$ in \eqref{sf:eq:power-factor} is necessary:
$t\log(1+\epsilon)$ need not be infinitesimal when $t$ is infinite.
The proof never expands it as an infinitesimal before extracting
its purely large part. All infinite sums stay in a common finite
Hahn field, as ensured by Lemma~\ref{sf:lem:levels}.
\end{remark}

\section{Logarithmic supports and arbitrary cutoffs}\label{sf:sec:trunc}

Put $\mathcal H=K+K\log x$ and $M_1=G\cup G\log x$.
The set $M_1$ is a union of two cosets, not a monomial group.
The ambient group for products and quotients of its monomials
is $\mathfrak M$.

\begin{proposition}\label{sf:prop:logsupport}
For every $W\in\Sk$, $W>0$, one has $\log W\in\mathcal H$,
with support in $M_1$ and an integer coefficient at $\log x$.
The space $\mathcal H$ is a $K$-module and is closed under
restriction of supports. For every integer $r\ge1$,
\begin{equation}
 M_1\cap[1,x^r]=
 \{1,\log x,x,x\log x,\ldots,x^{r-1},x^{r-1}\log x,x^r\}.
 \label{sf:eq:finite-scales}
\end{equation}
Distinct monomials of $M_1$ differ by a factor at least $\log x$.
\end{proposition}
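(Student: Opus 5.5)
We combine the positive-support theorem with the logarithm formula~\eqref{sf:eq:logsupport}, and then read off the finite-scale description from the monomial gap lemma.

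\emph{Logarithm of a Skolem function.} By Theorem~\ref{sf:thm:Sk-support}, $W\in K^\up+\N$. Write $W=rm(1+\epsilon)$ as in Lemma~\ref{sf:lem:expclosure}, with $m\in G$, $m\ge1$, $m=\exp(\gamma)x^{\vartheta+k}$ and $\gamma,\vartheta\in K_n^\up$. Then \eqref{sf:eq:logsupport} gives
\[
 \log W=\gamma+\vartheta\log x+k\log x+\log r+q,
\]
with $q\in K$. The terms $\gamma,\log r,q$ lie in $K$ and have supports in $G$. The term $\vartheta\log x$ has support in $G\log x$ and no monomial equal to $\log x$, since $\vartheta$ is purely large. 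Hence the coefficient at $\log x$ is exactly the integer $k$. No cancellation between the two parts can occur, because $G$ and $G\log x$ are disjoint: $\log x\notin G$ by Lemma~\ref{sf:lem:gap}.

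\emph{Module and restriction properties.} The $K$-module property is immediate. Restriction of supports holds because an element $a+b\log x$ has support equal to the disjoint union $\operatorname{supp}a\cup(\operatorname{supp}b)\log x$. Restricting to a subset therefore restricts $a$ and $b$ separately, and $K$ is closed under support restriction by Lemma~\ref{sf:lem:levels}.

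\emph{The finite-scale set \eqref{sf:eq:finite-scales}.} Take $g\in G$ with $1\le g\le x^r$, or with $1\le g\log x\le x^r$. By Lemma~\ref{sf:lem:gap}, either $g=x^j$, or $|\log g|\succ\log x$. In the second case, $g$ is either infinitesimal faster than every power $x^{-s}$, or larger than every power $x^s$. More precisely, $|\log g|\succ\log x$ gives $g>x^s$ for all real $s$ when $g>1$, and $g<x^{-s}$ for all $s$ when $g<1$. Either way, $g$ and $g\log x$ lie outside $[1,x^r]$. So $g=x^j$.
\begin{itemize}[leftmargin=2em]
\item For $g\in[1,x^r]$ this forces $0\le j\le r$.
\item For $g\log x\in[1,x^r]$, note that $x^j\log x\ge1$ forces $j\ge0$, because $x^{-1}\log x<1$. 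Also $x^j\log x\le x^r$ forces $j\le r-1$, because $x^r\log x>x^r$.
\end{itemize}
This yields exactly the displayed list.

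\emph{Ratios of distinct monomials.} For two distinct monomials of $M_1$, the quotient has one of two forms.
\begin{itemize}[leftmargin=2em]
\item If both lie in the same coset, the quotient is $g\in G$ with $g\neq1$. Then either $g=x^{\pm j}$ with $j\ge1$, or $|\log g|\succ\log x$. In both cases $\max(g,g^{-1})\ge x\ge\log x$.
\item If they lie in different cosets, the quotient is $g(\log x)^{\pm1}$ with $g\in G$. If $g=x^j$ with $j\neq0$, the larger of the quotient and its inverse is at least $x/\log x\ge\log x$. If $g=1$, the ratio is exactly $\log x$. Otherwise $|\log g|\succ\log x$ dominates, and the ratio is far from $1$.
\end{itemize}
The only delicate point is this mixed case near $g=x$, where the ratio is $x/\log x$. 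That ratio still exceeds $\log x$, since $x\succ(\log x)^2$.

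The main obstacle is bookkeeping the integrality of the $\log x$ coefficient. It relies on $\vartheta$ being purely large, so that $\vartheta\log x$ never contributes to the monomial $\log x$ itself, and on $q$ and $\log r$ lying in $K$ rather than in $K\log x$.
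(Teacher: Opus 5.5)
Your proof is correct and follows essentially the same route as the paper. You apply \eqref{sf:eq:logsupport} via Theorem~\ref{sf:thm:Sk-support}, obtain the integer coefficient $k$ from the disjointness of $G$ and $G\log x$ together with $\vartheta$ being purely large, and derive both the finite-scale list and the $\log x$ separation from Lemma~\ref{sf:lem:gap}. The one difference is cosmetic: in the mixed-coset ratio case the paper normalizes the quotient to be $>1$ and uses that $x$ is the least monomial of $G$ above $1$, while you split by the gap dichotomy; both yield the lower bound $\log x$.
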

\begin{proof}
Equation~\eqref{sf:eq:logsupport} applies to $W$ by
Theorem~\ref{sf:thm:Sk-support}. Since the cosets $G$ and
$G\log x$ are disjoint, the coefficient at $\log x$ is exactly
the integer $k$ there: $\vartheta$ has no constant term.
For $b,A,B\in K$, $b(A+B\log x)=bA+(bB)\log x$, which
proves the module assertion. Restricting a support restricts the
two cosets separately and stays in the same two finite Hahn fields.

Lemma~\ref{sf:lem:gap} shows that a monomial in $G$ of at most
polynomial size and at least inverse-polynomial size is $x^k$
for some integer $k$. Applying this both to $g$ and to $g\log x$
gives \eqref{sf:eq:finite-scales}.
Within either coset, a quotient greater than $1$ is at least $x$.
Between the cosets such a quotient has the form $g\log x$ or
$g/\log x$. In the first case, $g<1$ would imply
$g\log x\le(\log x)/x<1$, so the quotient is at least
$\log x$. In the second case $g>1$, so the quotient is at least
$x/\log x>\log x$.
\end{proof}

\begin{proposition}[Truncation at a positive series]\label{sf:prop:trunc}
Let $s>0$ belong to $\T$. For $F\in\T$, let
$\tr_{\succ s}F$ retain precisely those support monomials
$m$ with $m/s$ infinite. This operation is linear and weakly
increasing, and preserves $K$ and $\mathcal H$. Moreover
\begin{align}
 F-\tr_{\succ s}F&=O(s),\\
 \tr_{\succ s}F=\tr_{\succ s}F'
       &\quad\Longleftrightarrow\quad F-F'=O(s).
 \label{sf:eq:trunc-equivalence}
\end{align}
If $s\in\mathfrak M$, then
\[
 F=\tr_{\succ s}F+\coef_s(F)s+o(s).
\]
For $h\in\mathfrak M$ there is also the identity
\[
 \tr_{\succ hs}(hF)=h\tr_{\succ s}F.
\]
\end{proposition}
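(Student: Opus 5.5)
The whole statement reduces to Hahn-field bookkeeping. The key point is that the retained set $U_s=\{m\in\mathfrak M: m/s\text{ infinite}\}$ is an upward-closed (final) segment of the monomial group $\mathfrak M$. So $\tr_{\succ s}F$ is the restriction of the Hahn expansion of $F$ (LE2) to $\operatorname{supp}F\cap U_s$, and the complementary part $F-\tr_{\succ s}F$ is supported in the initial segment $\mathfrak M\setminus U_s=\{m: m\preceq s\}$.

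I would proceed in the following order.

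\emph{Linearity and preservation.} Linearity holds because restriction of supports to a fixed subset of $\mathfrak M$ commutes with addition and real scaling. Coefficients are read off monomial by monomial, and everything takes place in a common finite level by (LE4). Preservation of $K$ and of $\mathcal H$ is then exactly the support-restriction closure from Lemma~\ref{sf:lem:expclosure} and Proposition~\ref{sf:prop:logsupport}. For $\mathcal H$, I would restrict the two cosets $G$ and $G\log x$ separately.

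\emph{The bound $F-\tr_{\succ s}F=O(s)$.} If the remainder $D$ is nonzero, its leading monomial $\lm(D)$ satisfies $\lm(D)\preceq s$. Moreover $D\arch\lm(D)$, since distinct monomials lie in distinct Archimedean classes and the leading term dominates. Hence $D=O(s)$.

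\emph{The equivalence \eqref{sf:eq:trunc-equivalence}.} For $(\Rightarrow)$, write $F-F'=(F-\tr_{\succ s}F)-(F'-\tr_{\succ s}F')$ and apply the bound twice. For $(\Leftarrow)$, use linearity: it suffices to show that $D=O(s)$ forces $\tr_{\succ s}D=0$. If $\tr_{\succ s}D\neq0$, then $D$ has a support monomial in $U_s$. The leading monomial of $D$ is at least that monomial, so $\lm(D)/s$ is infinite, and therefore $D\arch\lm(D)\succ s$. This contradicts $D=O(s)$.

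\emph{Weak monotonicity.} I would prove: $F\le F'$ implies $\tr_{\succ s}F\le\tr_{\succ s}F'$. By linearity, reduce to $D=F'-F\ge0$ and show $\tr_{\succ s}D\ge0$. If $\tr_{\succ s}D\neq0$, its leading monomial and leading coefficient coincide with those of $D$, because $U_s$ is upward closed. The sign of a Hahn series is the sign of its leading coefficient, so $\tr_{\succ s}D>0$.

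\emph{The identity for $s\in\mathfrak M$.} The monomials $m\preceq s$ split into $m=s$ and $m\prec s$. The part of the expansion supported strictly below $s$ has leading monomial $\prec s$, hence is $o(s)$ by the same leading-term argument. This gives $F=\tr_{\succ s}F+\coef_s(F)s+o(s)$.

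\emph{The identity $\tr_{\succ hs}(hF)=h\tr_{\succ s}F$ for $h\in\mathfrak M$.} Multiplication by the monomial $h$ maps the expansion of $F$ termwise onto that of $hF$. Also $(hm)/(hs)=m/s$, so $m\in U_s$ if and only if $hm\in U_{hs}$.

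\emph{Expected obstacle.} Nothing here is hard. The only care needed is in the repeatedly used fact that a nonzero Hahn series is Archimedean-equivalent to its leading monomial, with the sign of its leading coefficient. I would derive this once from (LE2): write $F=c\,m(1+\epsilon)$ with $\epsilon\prec1$, where $\epsilon\prec1$ holds because every other support monomial is in a strictly smaller Archimedean class than $m$. I would also note explicitly that $s$ itself need not be a monomial, so $U_s$ is defined through $\lm(s)$: $m/s$ is infinite iff $m\succ\lm(s)$.
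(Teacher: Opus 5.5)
Your proposal is correct and follows essentially the same route as the paper: you reduce $m\succ s$ to $m>\lm(s)$, obtain the remainder bound, the equivalence and monotonicity from leading terms, split off the boundary monomial when $s\in\mathfrak M$, and shift supports by $h$. The only cosmetic difference is that for $\mathcal H$ you cite the support-restriction closure of Proposition~\ref{sf:prop:logsupport}, whereas the paper writes out the formula $\tr_{\succ s}(A+B\log x)=\tr_{\succ s}A+(\log x)\tr_{\succ s/\log x}B$.
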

\begin{proof}
Write $s=a m_s(1+\epsilon)$, with $a>0$ real,
$m_s=\lm(s)$ and $\epsilon\prec1$. Then $s\arch m_s$, and
for any monomial $m$,
\[
 m\succ s\quad\Longleftrightarrow\quad m>m_s.
\]
The retained support is an initial segment of the original Hahn
support, so its restriction lies in the same finite level.
The remaining support is at most $m_s$ and therefore gives
an $O(s)$ remainder. Conversely a nonzero series supported
strictly above $m_s$ has a leading term dominating $s$.
Linearity and this observation give \eqref{sf:eq:trunc-equivalence}.
If $F\ge0$, either the truncation is zero or it retains the
positive leading term; linearity therefore gives monotonicity.
The coefficient formula and shift identity follow directly by
separating the boundary monomial and multiplying supports by $h$.
For $A,B\in K$ one has explicitly
\[
 \tr_{\succ s}(A+B\log x)
 =\tr_{\succ s}A+(\log x)\tr_{\succ s/\log x}B\in\mathcal H.
\]
\end{proof}

\begin{corollary}\label{sf:cor:gaps}
For $f,g\in\Sk$,
\[
 f\prec g\ \Longrightarrow\ f=O(g/x),\qquad
 f\sim g\ \Longleftrightarrow\ f-g=O(g/x).
\]
Also $f-g=o(1)$ implies $f=g$.
\end{corollary}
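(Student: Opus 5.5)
The plan is to reduce all three assertions to the monomial gap, Lemma~\ref{sf:lem:gap}, applied to leading monomials in $G$. By Theorem~\ref{sf:thm:Sk-support}, every $f\in\Sk$ lies in $K^\up+\N$, so its support consists of monomials of $G$ that are at least $1$. The leading monomial $\lm(f)$ is then in $G$, and $f\arch\lm(f)$.

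\emph{First implication.} Suppose $f\prec g$. Then $\lm(f)<\lm(g)$, both in $G$, and since $G$ is a group the quotient $\lm(g)/\lm(f)$ is a monomial of $G$ greater than $1$. By Lemma~\ref{sf:lem:gap} this quotient is at least $x$. Hence
\[
 f\arch\lm(f)\le\lm(g)/x\arch g/x,
\]
which gives $f=O(g/x)$.

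\emph{The equivalence.} The direction $\Leftarrow$ is immediate: $f-g=O(g/x)$ gives $f/g-1=O(1/x)=o(1)$. For $\Rightarrow$, suppose $f\sim g$. Then $f$ and $g$ have the same leading monomial $\mathfrak m$ and the same leading coefficient. The difference $f-g$ lies in $K$, and every monomial in its support belongs to $G$ and is strictly less than $\mathfrak m$. If $f-g\neq0$, its leading monomial $\mathfrak n$ therefore satisfies $\mathfrak m/\mathfrak n\in G$ and $\mathfrak m/\mathfrak n>1$. Lemma~\ref{sf:lem:gap} again gives $\mathfrak m/\mathfrak n\ge x$, so $f-g\arch\mathfrak n\le\mathfrak m/x\arch g/x$. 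This argument does not require $f-g$ itself to be a Skolem function; it only uses the support of $f-g$ in $K$.

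\emph{Final claim.} Suppose $f-g=o(1)$. Both $f$ and $g$ have support at or above $1$, so $f-g$ has support in $\{m\in G:m\ge1\}$. A nonzero series with this support has leading monomial at least $1$ and so is not $o(1)$. Hence $f-g=0$, and since the interpretation of Skolem germs in $\T$ is faithful, $f=g$. Alternatively, one can apply Lemma~\ref{sf:lem:integers}: the difference $f-g$ is a bounded integer, and being infinitesimal it must be $0$.

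The main obstacle I expect is making sure every quotient of monomials stays inside $G$, so that the gap lemma applies. The ambient monomial group $\mathfrak M$ of $\T$ has no such gap: $\log x$, for instance, lies strictly between $1$ and $x$. All the arguments above take leading monomials of elements of $K$ only, which keeps everything inside the group $G$.
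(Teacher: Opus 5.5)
Your proof is correct and follows essentially the same route as the paper: you compare leading monomials in $G$ and apply the monomial gap (Lemma~\ref{sf:lem:gap}) to get the first two assertions, and you use the support statement $f-g\in K^\up+\Z$ from Theorem~\ref{sf:thm:Sk-support} for the last one. Your version just writes out the leading-monomial and leading-coefficient bookkeeping that the paper leaves implicit, and your alternative via Lemma~\ref{sf:lem:integers} for the final claim is also valid.
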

\begin{proof}
For the first assertion compare the leading monomials of $f,g$ in
$G$ and use Lemma~\ref{sf:lem:gap}. For the second, apply the
same gap to $f-g\in K$ when $f-g=o(g)$; the reverse implication
uses $x^{-1}\prec1$. Finally $f-g\in K^\up+\Z$ by
Theorem~\ref{sf:thm:Sk-support}, so a nonzero difference cannot
be infinitesimal.
\end{proof}

\section{Coefficient sets at a fixed prefix}\label{sf:sec:coefficients}

We call a real set \(A\) \emph{\(\N\)-like} if
\begin{equation}
 A\cap(-\infty,R]\quad\hbox{is finite for every }R\in\R.
 \label{f-eq-2.4}
\end{equation}
Such a set is bounded below and has order type at most \(\omega\).
Finite unions and fixed finite sums of \(\N\)-like sets are
\(\N\)-like.  The same is true of all finite sums from a positive
\(\N\)-like set: the set has a positive minimum, so a bounded sum
has bounded length and uses only finitely many possible summands.

\begin{proposition}[Coefficients of Skolem functions]\label{sf:prop:coef}
Let $s>0$ and $U\in\T$ with $U\succ s$. Suppose a nonempty
family $\mathcal A\subseteq\Sk$ satisfies
\[
 f=U+a_f s+o(s),\qquad a_f\in\R\quad(f\in\mathcal A).
\]
Then its coefficient set $\{a_f:f\in\mathcal A\}$ is $\N$-like.
The same holds for a family $f=a_f s+o(s)$ with all $a_f>0$.
\end{proposition}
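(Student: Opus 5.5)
The plan is to reduce both assertions to the Berarducci--Mamino discreteness theorem (Theorem~\ref{sf:thm:discrete}), applied in $F=\T$ with a fixed Skolem reference and a suitable field exponent $c$. Fix once and for all some $f_0\in\mathcal A$ with coefficient $a_0=a_{f_0}$. For every $f\in\mathcal A$ I will show that $(f/f_0)^c=r_f+o(1)$ for a real $r_f>0$ that is a strictly increasing function of $a_f$. Then $r_f\in D_c(f_0)$, and the finiteness of $D_c(f_0)\cap(0,R]$ for every real $R$ translates back into \eqref{f-eq-2.4}.

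For the first family, observe that $U>0$: indeed $s\prec U$ gives $f=U(1+o(1))$, and $f>0$. Put $c=U/s$. This is a positive infinite element of $\T$, so $c\ge1$. Dividing the two expansions gives
\[
 f/f_0=\bigl(1+a_f s/U+o(s/U)\bigr)\bigl(1+a_0s/U+o(s/U)\bigr)^{-1}
      =1+(a_f-a_0)s/U+o(s/U).
\]
The computation uses only that $s/U$ is infinitesimal, together with the transferred real estimate $(1+t)^{-1}=1-t+O(t^2)$. Multiplying by $c$ gives $c(f/f_0-1)=a_f-a_0+o(1)$. Lemma~\ref{sf:lem:scaled}(ii), applied with $z=f/f_0$ and $s$ replaced by the real number $a_f-a_0$, then yields
\[
 (f/f_0)^c=e^{a_f-a_0}+o(1).
\]
Hence $e^{a_f-a_0}\in D_c(f_0)$. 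Given a real $R$, every $f$ with $a_f\le R$ contributes a point of $D_c(f_0)\cap(0,e^{R-a_0}]$. By Theorem~\ref{sf:thm:discrete} this set is finite, and $a\mapsto e^{a-a_0}$ is injective. So only finitely many coefficients lie in $(-\infty,R]$; in particular arbitrarily negative coefficients are excluded automatically.

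For the second family I take $c=1$. Since $a_f,a_0>0$, we get $f/f_0=(a_f+o(1))/(a_0+o(1))=a_f/a_0+o(1)$, so $a_f/a_0\in D_1(f_0)$. Finiteness of $D_1(f_0)\cap(0,R/a_0]$ gives the claim, and boundedness below is immediate from positivity.

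The only delicate point I expect is the bookkeeping of the error terms in the first case. One must make sure that the $o(s)$ remainders become $o(1)$ after scaling by $c=U/s$, and that it is legitimate to apply Lemma~\ref{sf:lem:scaled}(ii) with a non-Skolem exponent $c\in\T$. The latter is exactly why the discreteness theorem was proved for all field exponents $c\ge1$, and the reference $f_0$ is a genuine Skolem function as that theorem requires.
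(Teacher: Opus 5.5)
Your proposal is correct and follows the paper's proof: fix an actual $f_0\in\mathcal A$, use Lemma~\ref{sf:lem:scaled}(ii) to obtain $(f/f_0)^c=e^{a_f-a_0}+o(1)$, and apply Theorem~\ref{sf:thm:discrete} to $D_c(f_0)$ (with $D_1(f_0)$ and ratios $a_f/a_0$ in the second case). The only difference is that the paper takes the exponent $d=f_0/s$ instead of your $U/s$. This makes $d(f/f_0-1)=(f-f_0)/s$ an exact identity and removes the error bookkeeping you flagged as delicate.
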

\begin{proof}
Choose an actual member $f_0\in\mathcal A$, with coefficient
$a_0$. Then $f_0\succ s$ and $d=f_0/s$ is infinite. Exactly,
\[
 d(f/f_0-1)=(f-f_0)/s=a_f-a_0+o(1).
\]
Lemma~\ref{sf:lem:scaled}(ii) gives
$(f/f_0)^d=e^{a_f-a_0}+o(1)$. These positive real values belong
to $D_d(f_0)$, which is finite in every bounded positive interval
by Theorem~\ref{sf:thm:discrete}. Coefficients $a_f\le R$ map
injectively into $(0,e^{R-a_0}]$, proving the assertion even
when the coefficients have either sign. If $U=0$ and $a_f>0$,
use $f/f_0=a_f/a_0+o(1)$ and $D_1(f_0)$ instead.
\end{proof}

\begin{proposition}[Coefficients of logarithms]\label{sf:prop:logcoef}
Let $0<s\le1$ belong to $\T$. Suppose a nonempty family
$\mathcal W\subseteq\Sk$ has a fixed prefix $L\in\T$ with
\[
 \log W=L+c_Ws+o(s),\qquad c_W\in\R.
\]
Then $\{c_W:W\in\mathcal W\}$ is $\N$-like.
\end{proposition}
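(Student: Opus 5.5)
Choose a reference member $W_0\in\mathcal W$ with coefficient $c_0$, and reduce everything to the discreteness theorem at the reference $W_0$, exactly as in the proof of Proposition~\ref{sf:prop:coef}. Subtracting the two expansions gives, for every $W\in\mathcal W$,
\[
 \log(W/W_0)=(c_W-c_0)s+o(s).
\]
The plan is to exponentiate this with the exponent $c=1/s$. Since $0<s\le1$, we have $c\in\T$ with $c\ge1$, so $c$ is an admissible exponent in Theorem~\ref{sf:thm:discrete}.

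Multiplying by $c$ gives
\[
 c\log(W/W_0)=c_W-c_0+o(1),
\]
because $o(s)/s=o(1)$. The right side is finite, so by the standard-part facts of Section~\ref{sf:sec:field} (namely $e^b=e^{\st(b)}+o(1)$ for finite $b$) we obtain
\[
 (W/W_0)^c=\exp\bigl(c\log(W/W_0)\bigr)=e^{c_W-c_0}+o(1).
\]
This step needs no separate treatment of finite versus infinite $s$; Lemma~\ref{sf:lem:scaled}(ii) is not required, since we work directly with the logarithm. Thus each value $e^{c_W-c_0}$ is a positive real lying in $D_c(W_0)$, witnessed by the Skolem function $W$ itself with the actual Skolem reference $Q=W_0$.

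The map $c_W\mapsto e^{c_W-c_0}$ is injective and increasing on $\R$. Hence the coefficients with $c_W\le R$ are sent into $D_c(W_0)\cap(0,e^{R-c_0}]$. By Theorem~\ref{sf:thm:discrete} this set is finite for every real $R$, so $\{c_W\}\cap(-\infty,R]$ is finite. This is exactly the $\N$-like condition \eqref{f-eq-2.4}.

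The only point I expect to need care is the legitimacy of the exponent. The constant $c=1/s$ is an arbitrary field element, possibly infinite and not a Skolem function, so we rely on the theorem holding for all $c\in F_{\ge1}$, which is why it was proved in that generality. The hypothesis $s\le1$ is what guarantees $c\ge1$. The reference $W_0$ is an actual member of $\Sk$, as the theorem requires, and the prefix $L$ is eliminated by the subtraction, so it never has to be a Skolem function.
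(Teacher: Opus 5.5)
Your proposal is correct and follows the paper's proof. Both fix an actual reference $W_0\in\mathcal W$, subtract the expansions, divide by $s$ to obtain $(W/W_0)^{1/s}=e^{c_W-c_0}+o(1)$, and then apply Theorem~\ref{sf:thm:discrete} with the exponent $1/s\in\T_{\ge1}$, followed by the bounded-interval injection used in Proposition~\ref{sf:prop:coef}. Your explicit appeal to $e^b=e^{\st(b)}+o(1)$ for finite $b$ fills in the one step the paper leaves implicit.
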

\begin{proof}
Choose an actual $W_0$ in the family, with coefficient $c_0$.
Subtract the two expansions and divide by $s$ to obtain
\[
 (W/W_0)^{1/s}=e^{c_W-c_0}+o(1).
\]
Here $1/s\in\T_{\ge1}$, and the reference $W_0$ is a Skolem
function. Apply Theorem~\ref{sf:thm:discrete} and the same bounded
interval argument as in Proposition~\ref{sf:prop:coef}.
\end{proof}

\begin{proposition}[Closure of the parameter domain]\label{sf:prop:parameters}
Finite combinations of elements of $\T$ using field operations,
exponential, logarithm, real powers, absolute value, minimum and
maximum remain in $\T$, whenever the displayed operations are
defined. Support restrictions and leading monomials of its series
also belong to $\T$. In particular this field contains all
auxiliary parameters used in the local and window recursions below.
\end{proposition}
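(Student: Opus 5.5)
The plan is to derive every clause from (LE1)--(LE4), without any new construction. I would handle the operations one at a time.

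First, the field operations. $\T$ is a field containing $\R$, so finite sums, products, quotients by nonzero elements and real scalar multiples of elements of $\T$ stay in $\T$.

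Second, exponential, logarithm and real powers. By (LE1), $\T$ is an elementary extension of $\R_{\exp}$ equipped with its standard exponential and logarithm. Hence $\exp$ is total on $\T$ and $\log$ is defined on $\T_{>0}$, both with values in $\T$. For $u>0$ and real $\alpha$, the paper's convention $u^\alpha=\exp(\alpha\log u)$ stays in $\T$. So does $u^v=\exp(v\log u)$ for any $v\in\T$, which covers exponential parameters that are themselves series. This is exactly the proviso "whenever the displayed operations are defined": a logarithm or power needs a positive argument, and a quotient needs a nonzero denominator.

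Third, the order-theoretic operations. $\T$ is linearly ordered, so $|u|=\max(u,-u)$, $\min(u,v)$ and $\max(u,v)$ each equal one of their arguments, possibly negated, and therefore lie in $\T$.

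Fourth, the series operations, which use (LE2)--(LE4). Every $F\in\T$ has a unique Hahn expansion with reverse-well-ordered support in $\mathfrak M$. By (LE4) this expansion already lives in some finite-level Hahn field $\R((\mathfrak M'))$. Restricting the support to any subset keeps it reverse well ordered and inside $\mathfrak M'$, so the restriction is an element of that full Hahn field and hence of $\T$. This is the argument already used in Lemma~\ref{sf:lem:levels} and in Proposition~\ref{sf:prop:trunc} for $\tr_{\succ s}$. For $F\ne0$ the leading monomial $\lm(F)$ is the maximum of the support, which exists by reverse well-ordering. It lies in $\mathfrak M\subseteq\T$, and the leading coefficient $\lc(F)$ is real.

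A finite combination of these operations is then handled by induction on the construction of the combination, each step landing back in $\T$. The final sentence of the statement is a bookkeeping remark. The only point requiring care is that no parameter arises as an infinite sum across unbounded levels, which the paper explicitly avoids after (LE4). I therefore do not expect a genuine obstacle. The one step worth checking is that support restriction really stays inside a single finite-level Hahn field, and (LE4) guarantees this.
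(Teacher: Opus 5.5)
Your proposal is correct and follows the paper's own proof essentially step for step. You use (LE1) for the field, exponential, logarithm and power operations, treat min/max (and absolute value) as one of the arguments, invoke (LE4) to keep support restrictions in the finite Hahn level, and use $\mathfrak M\subseteq\T$ for leading monomials; the only thing the paper adds is to make the ``in particular'' clause concrete, listing parameters such as $\rho_0=2^{-x}$, $\theta$, $\kappa$, $S^\#=\min(xS,V)$, $Y=e^{-p-z}V$ as finite combinations whose starting data are Skolem functions (in $\T$ by the canonical interpretation of (LE1)), and noting that the ordinal limit steps build no series across unbounded levels.
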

\begin{proof}
The field and exponential-logarithmic assertions are (LE1).
A minimum or maximum is one of its arguments. Support restrictions
remain in the finite Hahn level of the original series by (LE4),
and leading monomials lie in $\mathfrak M\subseteq\T$.
For the parameters used later, this applies first to
\[
 \rho_0=2^{-x},\quad \ell=\log H,\quad
 \theta=\log\log x,\quad \kappa=C\theta^2,\quad H^\kappa,
\]
and then to each recursive change, such as
$q+\log b$, $\sigma/b$, $e^{-p}$ and
\[
 S=\varepsilon RV,\quad S^\#=\min(xS,V),\quad
 \eta=\varepsilon V/D,\quad
 Y=e^{-p-z}V,\quad Z=e^zV.
\]
Centers chosen from a frame decomposition remain actual Skolem
functions. Every branch uses only finitely many such operations.
The limit steps of the ordinal estimates take suprema of order
types in ordinary set theory; they do not construct sums of series
across unbounded Hahn levels.
\end{proof}

\section{The monoid and the logarithmic alphabet}

\begin{definition}
For $B\subseteq\Sk$, let $M(B)$ be the family of finite products of
elements of $B$, including the empty product $1$.
\end{definition}

\begin{lemma}[Monoid bound]\label{g-lem:monoid}
Let $B\subseteq\Sk$,
$\alpha\ge\max(2,|B|)$, and $\beta\ge\max(1,|B/\arch|)$.
Then
\[
 |M(B)|\le 1+(\alpha^\omega)^{\od\beta},\qquad
 |M(B)/\arch|\le |M(B)|.
\]
In particular, for $n\ge2$,
\[
 |B|<\omega_{n+1},\quad |B/\arch|<\omega_n
 \quad\Longrightarrow\quad |M(B)|<\omega_{n+1}.
\]
\end{lemma}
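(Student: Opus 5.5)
The plan is to pass to logarithms, so that the multiplicative monoid $M(B)$ becomes an additive family to which (BM3) applies. Every $b\in B$ is a positive Skolem function. Discard the element $1$ if it lies in $B$; this does not change $M(B)$. Then every $b\in B$ satisfies $b\ge2$, so $\log b>0$ in $\T$.

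Put $A=\{\log b:b\in B\setminus\{1\}\}$. This is a positive subset of the ordered additive group of $\T$. Since $\log$ is strictly increasing, $A$ is order isomorphic to $B\setminus\{1\}$, so $|A|\le|B|\le\alpha$. The map $\log\colon M(B)\setminus\{1\}\to\Sigma^+A$, $b_1\cdots b_k\mapsto\log b_1+\cdots+\log b_k$, is a strictly increasing bijection. Hence $|M(B)|=1+|\Sigma^+A|$, where the empty product $1$ is the least element because all factors exceed $1$.

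To apply \eqref{ext-sums}, I need $|A/\arch|\le\beta$, with $\arch$ now the additive Archimedean relation $\log b\arch\log b'$. This is the main obstacle, since the hypothesis controls $|B/\arch|$, which is the multiplicative relation $b\arch b'$, not the additive relation on logarithms. The two relations are not obviously comparable. For instance $x$ and $x^2$ are multiplicatively inequivalent, yet $\log x\arch\log x^2$. Conversely, $2^x$ and $2^{x+\log x}$ would be multiplicatively inequivalent while their logarithms are additively equivalent. So the additive classes are coarser: if $b\arch b'$, then $\log b-\log b'=O(1)$, and for infinite logarithms this gives $\log b\arch\log b'$.

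The required comparison is therefore the following. The map $[b]\mapsto[\log b]$ from multiplicative classes of $B$ (with $b\ge2$) to additive classes of $A$ is well defined up to the finitely many bounded elements, weakly increasing, and surjective. A weakly increasing surjection of well orders has image of order type at most that of its domain. Hence $|A/\arch|\le|B/\arch|\le\beta$. The bounded members of $B$ are the integers $b\ge2$, whose logarithms lie in the single class $[1]$, so they only merge classes and cause no difficulty. Then (BM3) gives $|\Sigma^+A|\le(\alpha^\omega)^{\od\beta}$, hence $|M(B)|\le1+(\alpha^\omega)^{\od\beta}$. The case $\beta\ge1$ with $A$ empty or a singleton is absorbed as the paper remarks after (BM5).

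For the second inequality, $M(B)\to M(B)/\arch$ is a weakly increasing surjection of a well order, so the quotient has order type at most $|M(B)|$ by the same principle. For the ``in particular'' clause, take $k=n\ge2$, $\alpha=\max(2,|B|)<\omega_{n+1}$ and $\beta=\max(1,|B/\arch|)<\omega_n$. The strict clause of (BM3) gives $(\alpha^\omega)^{\od\beta}<\omega_{n+1}$, and adding $1$ in front keeps the bound below the limit ordinal $\omega_{n+1}$.
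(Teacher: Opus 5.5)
Your proof is correct and follows the paper's argument: pass to $\{\log b: b\in B,\ b>1\}$, show that $b\arch b'$ forces $\log b\arch\log b'$, so that $[b]\mapsto[\log b]$ factors through $B/\arch$, identify $M(B)$ with $\{0\}\cup\Sigma^+A$, and apply (BM3), with the monotone quotient projection for the second inequality. One small slip is harmless: the bounded members of $B$ need not be finitely many, since $B$ may contain every integer $\ge2$, but as you note they form a single class on both sides; the paper avoids the case split by using $\log b,\log b'\ge\log2$, so that $\log b-\log b'=O(1)$ already gives $\log b\arch\log b'$.
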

\begin{proof}
Put $L=\{\log b:b\in B,\ b>1\}$.  Every such $b$ is eventually
at least $2$.  If $b_1\arch b_2$, then
$\log b_1-\log b_2=O(1)$; since both logarithms are bounded below by
$\log2$, they have the same Archimedean class.  The weakly increasing
map $b\mapsto[\log b]$ therefore factors through $B/\arch$.
Consequently $|L|\le\alpha$ and $|L/\arch|\le\beta$.
Logarithm identifies $M(B)$ with $\{0\}\cup\Sigma^+L$.
Apply (BM3), and then use the monotone quotient projection.
The last assertion follows from (BM3) and closure under addition of
smaller ordinals.  Empty $L$ causes no difficulty.
\end{proof}

\begin{definition}[Alphabet]
Let $H>2$, $T=2^H$, and $B=\Sk_{<H}$.
Let $\Lam(T)$ consist of the Archimedean classes of $\log x$,
of $\log W$ for $1<W<T$, $W\in\Sk$, and of
$b\log W$ for $b\in B$, $1<W<T$, $W\in\Sk$.
Repeated classes are included only once.
\end{definition}

\begin{lemma}[Cap-derived alphabet]\label{g-lem:alphabet}
For every $W\in\Sk$ with $1<W<T$ there are $c\in M(B)$ and
$\delta\in\{0,1\}$ such that
\[
 \log W\arch c(\log x)^\delta.
\]
Moreover
\[
 \boxed{\quad |\Lam(T)|\le |M(B)/\arch|\od2.\quad}
\]
\end{lemma}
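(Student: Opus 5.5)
Argue by induction on Skolem terms, or more precisely by well-founded induction along the well-order $\Sk$ using additive irreducibles (Lemma~\ref{sf:lem:atoms}), for the first assertion. Since $\log$ of a finite sum is Archimedean-equivalent to the $\log$ of its largest summand, it suffices to treat the additive irreducibles $W$ with $1<W<T$. The only exception is when all summands are constant. Then $W$ is an integer $\ge2$, $\log W\arch1$, and we take $c=1$, $\delta=0$.

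For an irreducible $W=x$ take $c=1$, $\delta=1$. For $W=uv$ with $u,v\ge x$ we have $u,v<W<T$, and $\log W=\log u+\log v$ is Archimedean-equivalent to the larger of the two logarithms. The induction hypothesis then supplies $c,\delta$ directly. For $W=u^v$ with $u\ge2$, $v\ge x$, we have $\log W=v\log u$. From $u^v<T=2^H$ and $u\ge2$ we get $2^v\le u^v<2^H$, hence $v<H$, so $v\in B$. Also $u\le W<T$. If $u$ is constant, $\log u\arch1$ and $\log W\arch v$, so take $c=v$, $\delta=0$. Otherwise $u\ge x$ is smaller than $W$, and by induction $\log u\arch c'(\log x)^{\delta}$ with $c'\in M(B)$. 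Then $\log W\arch (vc')(\log x)^\delta$ with $vc'\in M(B)$. The induction is legitimate because $u<W$ in the well-order; the case $u=W$ is impossible since $v\ge x>1$ and $u\ge2$.

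For the boxed bound, note that every class in $\Lam(T)$ is of the form $[c(\log x)^\delta]$ with $c\in M(B)$. This holds for $\log x$ (take $c=1$) and for $\log W$ by the first part. For $b\log W$ with $b\in B$, multiply: $bc\in M(B)$. So $\Lam(T)$ embeds, as an ordered set of Archimedean classes, into the image of the map
\[
 M(B)/\arch\times\{0,1\}\to\text{classes},\qquad([c],\delta)\mapsto[c(\log x)^\delta].
\]
This map is well defined and weakly increasing in each coordinate. By (BM2), the order type of the image of a product of well orders under a weakly increasing map is at most the natural product, giving $|\Lam(T)|\le|M(B)/\arch|\od2$.

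The main obstacle is bookkeeping rather than depth: confirming that $\log$ of a sum is Archimedean-equivalent to $\log$ of the maximal summand. This holds because all summands are $\ge1$, so $\log(\sum)\le\log(k\max)=\log\max+O(1)$, and $\log\max$ is bounded below by a positive constant unless the summand is $1$. A second check is that the target map is order-compatible, so that (BM2) applies. Equality $\arch$ between $c(\log x)^0$ and $c'(\log x)^1$ can occur, but that only merges classes and so lowers the count.
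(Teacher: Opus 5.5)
Your proof is correct and essentially the same as the paper's. It uses the same case analysis (sum via the largest summand, product via the larger logarithm, power via $2^v\le u^v<2^H$ forcing the exponent into $B$) and the same (BM2) count through the weakly increasing map $([c],\delta)\mapsto[c(\log x)^\delta]$; the only difference is that you run the induction along the well-order $\Sk$ via the additive-irreducible forms of Lemma~\ref{sf:lem:atoms}, whereas the paper inducts on reduced terms.
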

\begin{proof}
Reduce a term by deleting the identities $1^g=1$, $1g=g1=g$,
and $g^1=g$.  Every proper subterm of a reduced term $W>1$
is smaller than $W$.  For a nontrivial power $U^b$, this uses
$U,b\ge2$, $U<U^b$, and $b<2^b\le U^b$.
These inequalities hold eventually: by induction on terms a
nonconstant Skolem function is eventually at least $x$, and a constant
Skolem function is a positive integer.

Induct on the reduced term.  A constant $W>1$ has
$[\log W]=[1]$, and $W=x$ has $[\log W]=[\log x]$.
If $W=U+V$ is nonconstant, put $M=\max(U,V)>1$.  Then
\[
 \log M\le\log(U+V)\le\log M+\log2.
\]
Thus $[\log(U+V)]=[\log M]$.  The constant case was already handled.
If $W=UV$, the logarithm is the sum of the two nonnegative
logarithms, so its class is the class of the larger nonzero one.

Finally suppose $W=U^b<T$.  Since $U\ge2$, the inequality
$2^b\le U^b<2^H$ gives $b<H$.  The induction hypothesis for $U$
therefore yields
\[
 \log W=b\log U\arch (bc_U)(\log x)^{\delta_U},
 \qquad bc_U\in M(B).
\]
Only $b<H$ is used: there is no assertion that the base $U$ is $<H$.
The term is finite, so the accumulated product is finite.

For the count, every atom also has the form
$[b\log W]=[(bc)(\log x)^\delta]$, and $bc\in M(B)$.
The empty product captures $[1]$ and $[\log x]$.
The weakly increasing map
\[
 (M(B)/\arch)\times\{0,1\}\longrightarrow
 \{\text{Archimedean classes of positive germs}\},\qquad
 ([c],\delta)\longmapsto[c(\log x)^\delta]
\]
has an image containing the whole alphabet.  Apply (BM2).
This also proves that the alphabet is well ordered.
\end{proof}

\begin{corollary}\label{g-cor:alphabet}
For $n\ge2$, if $|B|<\omega_{n+1}$ and $|B/\arch|<\omega_n$,
then $|\Lam(T)|<\omega_{n+1}$.  In particular
$|\Lam(T)|\le\omega^\lambda$ for some $\lambda<\omega_n$.
\end{corollary}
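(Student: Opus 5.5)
Combine the boxed estimate of Lemma~\ref{g-lem:alphabet} with the monoid bound of Lemma~\ref{g-lem:monoid}, and then use closure of the finite towers under natural operations from (BM4).

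First, the hypotheses $|B|<\omega_{n+1}$ and $|B/\arch|<\omega_n$ with $n\ge2$ are exactly those of the final assertion of Lemma~\ref{g-lem:monoid}. That assertion gives $|M(B)|<\omega_{n+1}$. The same lemma gives $|M(B)/\arch|\le|M(B)|$, so $|M(B)/\arch|<\omega_{n+1}$.

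Second, Lemma~\ref{g-lem:alphabet} gives $|\Lam(T)|\le|M(B)/\arch|\od2$. Since $\omega_{n+1}$ is closed under finite natural products of smaller ordinals (last sentence of (BM4)), and $2<\omega_{n+1}$, we get $|M(B)/\arch|\od2<\omega_{n+1}$. Hence $|\Lam(T)|<\omega_{n+1}$.

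For the ``in particular'' clause, put $\gamma=|\Lam(T)|<\omega_{n+1}=\omega^{\omega_n}$. The Cantor normal form of $\gamma$ has leading exponent $\lambda_0<\omega_n$, so $\gamma<\omega^{\lambda_0+1}$. Take $\lambda=\lambda_0+1$. This is still below $\omega_n$, because $\omega_n$ is a limit ordinal for $n\ge1$. The case $\gamma=0$ is trivial with $\lambda=0$. Then $|\Lam(T)|\le\omega^\lambda$.

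There is no real obstacle. The only point to check is that the closure statement in (BM4) applies to natural products with the finite factor $2$; alternatively, $u\od2=u\hs u\le u+u\cdot2=u\cdot3$ by \eqref{ext-ordinals}, and $u\cdot3<\omega_{n+1}$ whenever $u<\omega_{n+1}$, since $\omega_{n+1}$ is additively principal.
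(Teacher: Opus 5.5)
Your proposal is correct and follows the paper's own proof. You feed the monoid bound and $|M(B)/\arch|\le|M(B)|$ into the boxed alphabet estimate, and then use closure of $\omega_{n+1}$ under natural products of smaller ordinals; the ``in particular'' clause is the cofinality of the powers $\omega^\lambda$, $\lambda<\omega_n$, in $\omega_{n+1}$, which you verify explicitly through the Cantor normal form and the fact that $\omega_n$ is a limit ordinal.
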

\begin{proof}
Use Lemmas~\ref{g-lem:monoid} and \ref{g-lem:alphabet} and closure of
$\omega_{n+1}$ under natural products of strictly smaller ordinals.
Since $\omega_n$ is a limit ordinal, the powers $\omega^\lambda$,
$\lambda<\omega_n$, are cofinal in $\omega_{n+1}$.
\end{proof}

\section{Frames and the local estimates for the global ledger}

An unrestricted frame is an expression
\[
 D=P\prod_{j=1}^r W_j^{b_j},\qquad
 P\in\N[x]\setminus\{0\},\quad
 b_j\in\Sk,\quad b_j\ge x,\quad
 W_j\in\Sk,\quad W_j>1.
\]
The case $r=0$ is allowed. Its logarithmic terms $b_j\log W_j$
are called atoms. By (BM1), every Skolem function is a finite
positive sum of such frames.

Now fix $H>2$, $T=2^H$, and $B=\Sk_{<H}$, with $x<H$.
A frame below the cap means an unrestricted frame with value $<T$.
Every factor and every base of such a frame is $<T$. Moreover,
for each label, $2^{b_j}\le W_j^{b_j}\le D<T$ implies $b_j<H$,
so its labels automatically belong to $B$.
In the arguments with a fixed cap, \emph{frame} means a frame below
that cap; the coefficient induction, which ranges over all of
$\Sk$, uses unrestricted frames.

Set $\rho_0=2^{-x}$.

For $\xi\in\Lam(T)$ put
\[
 C_\xi=\{W\in\Sk_{<T}:W=1\text{ or }[\log W]\le\xi\},
 \qquad C_{<\xi}=\bigcup_{\mu<\xi}C_\mu.
\]
Define $\Ycal_\xi$ to be the set of frames whose values belong to
$C_\xi$, and $\Zcal_\xi$ to be their products with $P$ stripped.
Let $\Acal_\xi$ consist of their atoms.  In particular,
\begin{equation}
 \Ycal_\xi\subseteq C_\xi,\qquad
 \Ycal_\xi\subseteq(\N[x]\setminus\{0\})\cdot\Zcal_\xi.
 \label{g-eq-3.1}
\end{equation}
The second inclusion is not asserted to be an equality.

\begin{lemma}[Descent and initial segments]\label{g-lem:descent}
Every member of $C_\xi$ is a finite positive sum of elements of
$\Ycal_\xi$.  Every atom of such a frame has class at most $\xi$,
and its base belongs to $C_{<\xi}$.  For every $\rho$,
\[
 Q_\rho(C_{<\xi})=\sup_{\mu<\xi}Q_\rho(C_\mu),\qquad
 |C_{<\xi}|=\sup_{\mu<\xi}|C_\mu|.
\]
\end{lemma}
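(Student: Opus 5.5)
The plan is to prove the three assertions of the lemma in order, the descent statement first, since the atom and base claims rest on it.

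\emph{Descent.} Let $W\in C_\xi$. If $W=1$, then $W$ is itself the frame with $P=1$ and $r=0$. Otherwise (BM1) writes $W$ as a finite positive sum of unrestricted frames $D_1,\dots,D_k$. Each $D_i$ satisfies $1\le D_i\le W<T$, so it is a frame below the cap. If $D_i>1$, then $\log D_i\le\log W$, and hence $[\log D_i]\le[\log W]\le\xi$. Thus $D_i\in C_\xi$ and $D_i\in\Ycal_\xi$. A summand equal to $1$ is trivially in $C_\xi$.

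\emph{Atoms.} Take a frame $D=P\prod W_j^{b_j}$ in $\Ycal_\xi$. Its logarithm $\log P+\sum_j b_j\log W_j$ is a sum of nonnegative terms, so each atom satisfies $b_j\log W_j\le\log D$. This gives $[b_j\log W_j]\le[\log D]\le\xi$. The case $D$ constant is impossible when $r\ge1$, since $b_j\ge x$ forces $D\ge2^x$.

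\emph{Bases.} Each base satisfies $1<W_j\le D<T$. I will show $[\log W_j]<[b_j\log W_j]\le\xi$. Since $\log W_j\ge\log 2$ is bounded below and $b_j\ge x$ is infinite, we get $\log W_j\prec b_j\log W_j$, so the class of $\log W_j$ is strictly smaller.

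The delicate point is that I need $[\log W_j]$ to be an element $\mu$ of the alphabet $\Lam(T)$ with $\mu<\xi$. This holds because $\Lam(T)$ contains, by definition, the class of $\log W$ for every Skolem $1<W<T$. Hence $\mu=[\log W_j]\in\Lam(T)$, and $W_j\in C_\mu\subseteq C_{<\xi}$.

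\emph{Suprema.} The sets $C_\mu$ increase with $\mu$, because the condition $[\log W]\le\mu$ is monotone in $\mu$. Each $C_\mu$ is an initial segment of $\Sk_{<T}$, since $W\le W'$ implies $[\log W]\le[\log W']$. So $C_{<\xi}$ is an increasing union of initial segments of a well order, and its order type is the supremum of their order types.

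For $Q_\rho$, I note that $\equiv_\rho$-classes are convex, as in Lemma~\ref{sf:lem:scaled}(i): $\log(f/g)=O(\rho)$ is preserved for elements between $f$ and $g$. The image of an initial segment in the quotient $C_{<\xi}/{\equiv_\rho}$ is therefore an initial segment of that quotient. Any proper initial segment of $C_{<\xi}/{\equiv_\rho}$ is bounded by the class of some element, and that element lies in some $C_\mu$. So the quotient of $C_{<\xi}$ is the increasing union of the quotients of the $C_\mu$.

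Restricting $\equiv_\rho$ to $C_\mu$ gives exactly the classes of $C_{<\xi}$ met by $C_\mu$. This needs no extra argument, because the relation is defined pointwise and does not depend on the ambient set. Hence $Q_\rho(C_{<\xi})=\sup_{\mu<\xi}Q_\rho(C_\mu)$.

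A boundary class of $C_{<\xi}$ cut by $C_\mu$ still counts as one element in each quotient, so the quotient order types are monotone and their union has the supremum order type. I expect the only real care to lie in this convexity and initial-segment bookkeeping, together with membership of $[\log W_j]$ in $\Lam(T)$.
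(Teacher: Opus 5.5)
Your proposal is correct and follows essentially the same route as the paper. Frames are bounded by the member, each atom is at most the frame's logarithm, $b\ge x$ gives strict class descent for the base, and the suprema follow because the $C_\mu$ form an increasing chain of initial segments whose images under the convex $\equiv_\rho$-projection remain initial segments. You also spell out two points the paper leaves implicit: that $[\log W_j]$ actually belongs to $\Lam(T)$, and that the restricted quotient $C_\mu/{\equiv_\rho}$ embeds order-isomorphically into $C_{<\xi}/{\equiv_\rho}$.
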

\begin{proof}
Each frame $D$ in a positive decomposition of $W$ satisfies $D\le W$.
Each atom is positive and at most $\log D$.
Since $b\ge x$,
$\log W_j\prec b\log W_j$, so its class is strictly smaller
than the atom class.  This proves the descent.
The $C_\mu$ are initial segments and form an increasing chain.
Convex quotient projections preserve the initial-segment property.
\end{proof}

\begin{hypothesis}[Uniform local fibre estimates]\label{g-hyp:fibres}
Let $g,n$ be ordinals and put
\[
 \Theta=\omega^{\omega^{g+1}},\qquad
 \Phi=\omega^{\omega^{n+1}}.
\]
The following local inequalities will be proved in
Theorem~\ref{local-roots}; they are recorded here to state the
global counting implication:
\begin{enumerate}[label=\textup{(H\arabic*)},leftmargin=3.5em]
\item For every $b\in B$ and every $\equiv_{\rho_0}$-class
$E$ of $\Sk_{<T}$, one has $Q_{1/b}(E)<\Theta$.
\item For every Archimedean class $E$ of $\Sk_{<T}$,
one has $Q_{\rho_0}(E)<\Phi$.
\end{enumerate}
Both bounds are uniform in the class and in all indicated labels.
They imply the same bounds on intersections with initial fragments.
\end{hypothesis}

In (H1), $1/b\succeq\rho_0$ is a trivial case, since the relation is
then coarser on $E$.  Otherwise the relations refine one another, and
the ordered-sum bound on the convex fibres gives
\begin{equation}
 Q_{1/b}(C_\xi)\le Q_{\rho_0}(C_\xi)\od\Theta.
 \label{g-eq-3.2}
\end{equation}
Every member and its dominant frame have the same Archimedean class:
if there are $k$ frames and $D$ is the largest, then
$D\le W\le kD$.  Consequently (H2) gives
\begin{equation}
 Q_{\rho_0}(C_\xi)\le |\Ycal_\xi/\arch|\od\Phi.
 \label{g-eq-3.3}
\end{equation}
Thus the formulation of (H2) is precisely the dominant-frame fibre
condition, expressed without an auxiliary choice of decomposition.

\section{The global ordinal ledger}

\begin{theorem}[One-step counting with explicit hypotheses]\label{g-thm:ledger}
Suppose
\[
 |B|\le\omega^{\omega^a},\qquad
 |\Lam(T)|\le\omega^\lambda,\qquad \lambda\ge1,
\]
and suppose (H1)--(H2) hold with $g,n$.  Define
\begin{equation}
 \begin{aligned}
 \Gamma&=\max(a,g+1,n+1,\lambda+4),\\
 \bar\gamma&=\Gamma+\omega^{\lambda\cdot2+1},\\
 \bar\tau&=\bar\gamma+1+\omega^\lambda,\qquad
 d=\max(\bar\gamma,g+1)+1.
 \end{aligned}
\label{g-ledger-parameters}\end{equation}
Then
\[
 \begin{aligned}
 Q_{\rho_0}(\Sk_{<T})&\le\omega^{\omega^{\bar\gamma}},&
 |\Sk_{<T}/\arch|&\le\omega^{\omega^{\bar\gamma}},\\
 |\Sk_{<T}|&\le\omega^{\omega^{\omega^{\bar\tau}}},&
 Q_{1/b}(\Sk_{<T})&\le\omega^{\omega^d}\quad(b\in B).
 \end{aligned}
\]
\end{theorem}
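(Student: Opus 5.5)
We prove the statement by transfinite induction along the alphabet $\Lam(T)$. For every $\xi\in\Lam(T)$ we control $|\Ycal_\xi/\arch|$, $Q_{\rho_0}(C_\xi)$ and $|C_\xi|$ in terms of the position of $\xi$. Since $\Sk_{<T}=\bigcup_\xi C_\xi$ and these sets form an increasing chain of initial segments, the final bounds are suprema over $\xi<\omega^\lambda$. The limit steps are handled by Lemma~\ref{g-lem:descent}. The four asserted inequalities then follow in the order $|\Sk_{<T}/\arch|$, $Q_{\rho_0}$, $|\Sk_{<T}|$, $Q_{1/b}$.

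\textbf{Step 1: Archimedean classes of frames.} A frame in $\Ycal_\xi$ has logarithm $\log P+\sum_j b_j\log W_j$. Its Archimedean class is the class of a natural combination: the polynomial part contributes classes $x^k$, and the atom part contributes the largest atom class, refined by multiplicities. We use the map $D\mapsto$ (multiset of atom classes, each with a real leading coefficient, together with $\deg P$). Atom classes are at most $\xi$, so they lie in an initial segment of $\Lam(T)$ of type at most $\omega^\lambda$. By Proposition~\ref{sf:prop:logcoef}, the coefficients at a fixed prefix are $\N$-like, so each contributes at most $\omega$. The base of each atom lies in $C_{<\xi}$, so the induction hypothesis controls the fibres through $|C_{<\xi}|$. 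Assembling this with (BM2) and (BM3) as a sum over finitely many atoms, we aim for
\[
|\Ycal_\xi/\arch|\le\omega^{\omega^{\Gamma+\omega^{\lambda\cdot 2}\cdot k}}
\]
when $\xi$ is the $k$-th step inside a block. The summand $\omega^{\lambda\cdot2+1}$ in $\bar\gamma$ is the supremum of these exponents. Here $\lambda\cdot2$ accounts for the alphabet contributing twice: once for atom classes and once, by Lemma~\ref{g-lem:alphabet}, for the pair $(c,\delta)$ with $\delta\in\{0,1\}$. The terms $a$ and $\lambda+4$ in $\Gamma$ absorb the label set $B$ and the finitely many extra factors $\omega$ from coefficients and polynomial degree.

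\textbf{Step 2: from frame classes to the coarse quotients.} Inserting Step 1 into \eqref{g-eq-3.3} and using (H2) with $\Phi=\omega^{\omega^{n+1}}\le\omega^{\omega^\Gamma}$ gives
\[
Q_{\rho_0}(C_\xi)\le\omega^{\omega^{\bar\gamma}},
\]
because (BM4) makes $\omega^{\omega^{\bar\gamma}}$ closed under natural products of smaller ordinals. The bound for $|\Sk_{<T}/\arch|$ follows, since $\equiv_{\rho_0}$ refines $\arch$. Taking the supremum over $\xi$ gives the first line of the statement.

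\textbf{Step 3: the full order type.} Inside one $\equiv_{\rho_0}$-class, two members $W,W'$ satisfy $\log(W/W')=O(2^{-x})$, so by Corollary~\ref{sf:cor:gaps} they agree up to a very fine error. We bound $|C_\xi|$ by writing each member as a finite sum of frames, as in Lemma~\ref{g-lem:descent}, and applying (BM3) to the set of frames with $\alpha=|\Ycal_\xi|$ and $\beta=|\Ycal_\xi/\arch|$. Here $|\Ycal_\xi|$ is bounded via $\Zcal_\xi$ as a monoid over the atoms, using Lemma~\ref{g-lem:monoid}. This yields a bound of the shape
\[
(\alpha^\omega)^{\od\beta}\le\omega^{\omega^{\omega^{\bar\gamma}\cdot\omega^{\lambda}}},
\]
with the extra $+1$ in $\bar\tau$ coming from the factor $\alpha^\omega$. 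Since $\omega^{\bar\gamma}\cdot\omega^{\lambda}\cdot\omega=\omega^{\bar\gamma+\lambda+1}\le\omega^{\bar\tau}$, this gives $|\Sk_{<T}|\le\omega^{\omega^{\omega^{\bar\tau}}}$.

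\textbf{Step 4: the fine quotients.} The bound $Q_{1/b}$ is immediate from \eqref{g-eq-3.2}, Step 2 and (H1):
\[
\omega^{\omega^{\bar\gamma}}\od\omega^{\omega^{g+1}}\le\omega^{\omega^{\max(\bar\gamma,g+1)}\cdot2}<\omega^{\omega^{d}}.
\]

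The main obstacle is Step 1. It requires showing that the Archimedean class of a frame below the cap is determined, up to a uniformly bounded ambiguity, by finitely many data drawn from sets of controlled order type. The difficulty lies in cancellations among the leading terms of $\log D$ when several atoms share a class. To handle these I would order the atoms by class, pass to the leading monomial of each atom in $\mathcal H$ (Proposition~\ref{sf:prop:logsupport}), and use Proposition~\ref{sf:prop:logcoef} to show that the coefficient sums at each monomial range over an $\N$-like set. This turns the count into a finite ordered sum over the alphabet, which (BM3) can handle.
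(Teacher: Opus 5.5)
Your Step~1 contains a genuine gap, and it sits at the core of the theorem: your plan never uses (H1), although it is an explicit hypothesis. The Archimedean class of a frame is the class of $\log D$ modulo $O(1)$, that is, of a sum of atoms $b\log W$ modulo $O(1)$. You propose to control the bases ``through $|C_{<\xi}|$''. That is a total order type, one exponential level above the quotients (compare $\omega^{\omega^{\omega^{\bar\tau}}}$ with $\omega^{\omega^{\bar\gamma}}$). Bounding the atoms modulo $O(1)$ by $|B|\od|C_{<\xi}|$ makes $|\Ycal_\xi/\arch|$, and hence $Q_{\rho_0}(C_\xi)$, as large as the previous total. Feeding that $\beta$ into (BM3) in your Step~3 then adds a whole exponential level at every alphabet step, so after $\omega^\lambda\ge\omega$ steps no finite-tower bound survives. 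Your alternative coding by atom classes and leading coefficients does not determine the class either. For example, $2^{x^2}$ and $2^{x^2+x}$ are frames with a single atom of class $[x^2]$ and leading coefficient $\log2$, yet their ratio is $2^x$. Refining to the coefficient at every monomial of $\log D$ above $1$ is not available. That support can be infinite, already for a label such as $(x+1)^x$. Moreover, $\N$-likeness of coefficients is proved only at scales $\le1$ (Proposition~\ref{sf:prop:logcoef}) and at the finitely many scales in $(1,x^r]$ (Proposition~\ref{f-prop:coefficients}).

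The missing idea is to make $Q_{\rho_0}(C_\mu)$, not $|C_\mu|$, the inductive quantity for the quotient bounds, and to reach the bases through (H1). Partition the atoms by class $\mu\le\xi$, and then by the finitely many factorizations $([b],[\log W])$ of $\mu$ (Lemma~\ref{g-lem:factor}). Take a block whose label class has representative $b_0$. There, $b\log W$ modulo $O(1)$ depends only on $b$ and on the $\equiv_{1/b_0}$-class of $W$. So the block is an increasing image of $B\times(C_{<\xi}/{\equiv_{1/b_0}})$, and \eqref{g-eq-3.2} with (H1) bounds the second factor by $Q_{\rho_0}(C_{<\xi})\od\Theta$. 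Sum over the at most $\omega^\lambda$ convex atom-class blocks, and apply (BM3) in the group of germs modulo bounded germs; this bounds $\Zcal_\xi/\arch$ and $\Ycal_\xi/\arch$. Then (H2), via \eqref{g-eq-3.3}, closes the recursion $\gamma_\xi=\max(\Gamma,\sup_{\mu<\xi}\gamma_\mu)\hs(\lambda+4)$. The summand $\omega^{\lambda\cdot2+1}$ comes from a per-step cost below $\omega^{\lambda+1}$ over fewer than $\omega^\lambda$ steps, not from a doubling of the alphabet. The totals need a separate recursion, because $\alpha=|\Ycal_\xi|$ depends on $|C_{<\xi}|$ and is not uniform in $\xi$, as your Step~3 treats it. With $E=\omega^{\tau'}$ one gets $|C_\xi|\le\omega^{\omega^{E\cdot4}}$, using $\beta=|\Ycal_\xi/\arch|\le Q_{\rho_0}(C_\xi)\le\omega^E$. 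Each alphabet step thus raises $\tau$ by one, which is the source of $+\omega^\lambda$ in $\bar\tau$. Once Step~1 is replaced in this way, your Steps~2 and~4 are correct as written.
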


\begin{lemma}[Finite class factorizations]\label{g-lem:factor}
For a fixed atom class $\mu$ there are finitely many pairs
$([b],[\log W])$ realizing $[b\log W]=\mu$.
\end{lemma}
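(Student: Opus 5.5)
The plan is a pure order-theoretic argument. It uses only that both coordinates range over well-ordered sets of classes and that the first coordinate determines the second. No series computation is needed.

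First, I would record that the Archimedean classes of positive elements of $\T$ form a linearly ordered abelian group under multiplication: $[u][v]=[uv]$, with inverse $[u]^{-1}=[1/u]$. Each class set is compatible with the order, since $u\preceq u'$ and $v\preceq v'$ give $uv\preceq u'v'$. Hence, if $[b\log W]=\mu$, then $[\log W]=\mu[b]^{-1}$. Thus a realizing pair $([b],[\log W])$ is determined by its first coordinate. Moreover, if $([b],[\log W])$ and $([b'],[\log W'])$ both realize $\mu$ and $[b]<[b']$, then $[\log W]>[\log W']$. So along the set of realizing pairs, the two coordinates are strictly anti-monotone.

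Second, I would check that both coordinate sets are well ordered. The map $b\mapsto[b]$ is weakly increasing on $B\subseteq\Sk$, and $\Sk$ is well ordered by (BM1). So its image, the set of classes $[b]$ with $b\in B$, is well ordered: a nonempty subset of the image has as least element the image of the least member of its inverse image. In the same way, $W\mapsto\log W\mapsto[\log W]$ is weakly increasing on $\{W\in\Sk:1<W<T\}$. So the classes $[\log W]$ also form a well-ordered set. This is also contained in the count of Lemma~\ref{g-lem:alphabet}.

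Third, I would derive finiteness. Suppose infinitely many pairs realize $\mu$. Their first coordinates are pairwise distinct by the first step. By Lemma~\ref{sf:lem:subsequence}, a sequence of distinct elements of a well order has an infinite strictly increasing subsequence $[b_0]<[b_1]<\cdots$. The corresponding second coordinates then form a strictly decreasing infinite sequence $[\log W_0]>[\log W_1]>\cdots$ inside a well-ordered set, which is impossible. Hence only finitely many pairs realize $\mu$.

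The only point needing care is the group structure in the first step, namely that $[b][\log W]=\mu$ pins down $[\log W]$ from $[b]$. This is immediate in the ordered field $\T$ by cancellation, since $[u]=[u']$ implies $[uv^{-1}]=[u'v^{-1}]$. I expect no genuine obstacle beyond stating this cleanly.
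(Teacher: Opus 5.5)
Your proposal is correct and follows essentially the same approach as the paper. Both arguments use the same three ingredients: the label classes and base-logarithm classes are well ordered as monotone images of subsets of $\Sk$; the Archimedean classes form an ordered group, so $[\log W]=\mu[b]^{-1}$ is determined by $[b]$ and decreases as $[b]$ increases; and infinitely many factorizations would therefore yield a strictly decreasing sequence in a well-ordered set.
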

\begin{proof}
The label classes and the base-logarithm classes are well ordered:
they are monotone images of subsets of $\Sk$.
Positive Archimedean classes form an ordered multiplicative group.
Infinitely many distinct factorizations of $\mu$ would give an
increasing sequence of distinct first factors, and therefore a
decreasing sequence of second factors.  This contradicts well ordering.
\end{proof}

\begin{lemma}[Sector bound]\label{g-lem:sector}
Assume (H1), and suppose
$Q_{\rho_0}(C_\mu)\le\omega^{\omega^{\gamma_\mu}}$ for $\mu<\xi$.
Put
\[
 \gamma_{<\xi}=\sup_{\mu<\xi}\gamma_\mu,\qquad
 \gamma'=\max(\Gamma,\gamma_{<\xi}),\qquad \sup\varnothing=0.
\]
Then
\[
 \begin{aligned}
 |\Acal_\xi/O(1)|&\le\omega^{\omega^{\gamma'+1}},\\
 |\Zcal_\xi/\arch|&\le\omega^{\omega^{\gamma'+\lambda+3}},\\
 |\Ycal_\xi/\arch|&\le
       \omega^{\omega^{\gamma'+\lambda+3}\cdot2}.
 \end{aligned}
\]
\end{lemma}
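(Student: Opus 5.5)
The plan is to bound the three families in turn: first the atoms modulo bounded error, then the stripped products by a monoid argument in additive form, and finally the frames by multiplying with polynomials.

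\emph{Atoms.} An atom of a frame in $\Ycal_\xi$ is $b\log W$ with $b\in B$, and by Lemma~\ref{g-lem:descent} the base satisfies $W\in C_{<\xi}$ and $[b\log W]\le\xi$. Fix the label $b$. If $W\equiv_{1/b}W'$, then $b\log W-b\log W'=O(1)$. So the atoms with label $b$ map weakly increasingly, modulo $O(1)$, onto a quotient of $C_{<\xi}/{\equiv_{1/b}}$. By \eqref{g-eq-3.2} and Lemma~\ref{g-lem:descent}, this quotient has type at most
\[
 \sup_{\mu<\xi}Q_{\rho_0}(C_\mu)\od\Theta\le\omega^{\omega^{\gamma_{<\xi}}}\od\omega^{\omega^{g+1}}.
\]
To combine the labels, I would not take a plain union over $b$; that would only give a natural product with $|B|\le\omega^{\omega^a}$. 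Instead I would view $\Acal_\xi/O(1)$ as the image of the lexicographic product of $B$ with the fibre quotient. The map $(b,W)\mapsto b\log W$ is weakly increasing in $W$ for fixed $b$ and weakly increasing in $b$ for fixed $W$, so (BM2) applies. It gives the natural-product bound
\[
 \omega^{\omega^a}\od\omega^{\omega^{\gamma_{<\xi}}}\od\omega^{\omega^{g+1}}\le\omega^{\omega^{\gamma'}\cdot3}\le\omega^{\omega^{\gamma'+1}},
\]
using $a,g+1\le\Gamma\le\gamma'$. This is the first inequality.

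\emph{Stripped products.} A member of $\Zcal_\xi$ is a product $\prod W_j^{b_j}$, and its logarithm is a finite sum of atoms. Archimedean equivalence of two such products means their log-difference is $O(1)$. Hence $\Zcal_\xi/\arch$ is a monotone image of $\Sigma^0(\Acal_\xi/O(1))$, computed in the additive group modulo $O(1)$. I would apply (BM3) to a set of representatives with $\alpha=\omega^{\omega^{\gamma'+1}}$. For $\beta$ I need the number of Archimedean classes of atoms. These lie in $\Lam(T)$ below $\xi$, so $\beta\le\omega^\lambda$; Lemma~\ref{g-lem:factor} makes this count legitimate despite repeated factorizations. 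Then
\[
 (\alpha^\omega)^{\od\beta}=\bigl(\omega^{\omega^{\gamma'+1}\cdot\omega}\bigr)^{\od\omega^\lambda}=\omega^{\omega^{\gamma'+2}\od\omega^\lambda},
\]
and $\omega^{\gamma'+2}\od\omega^\lambda\le\omega^{\gamma'+2+\lambda}$. Since $\lambda+4\le\Gamma\le\gamma'$, this is at most $\omega^{\gamma'+\lambda+3}$, absorbing the extra $+1$ from the $\Sigma^0$ zero. The main obstacle is here. Passing from $\Sigma^+$ in a quotient group modulo the convex subgroup $O(1)$ to (BM3) requires that the quotient is an ordered abelian group and that the representatives form a positive well-ordered set. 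Positivity holds because atoms are at least $x\log 2$. One must also check that the number of Archimedean classes of the representatives is governed by $\Lam(T)$ and not by something finer. Lemma~\ref{g-lem:factor} is what I expect to invoke there.

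\emph{Frames.} By \eqref{g-eq-3.1}, every member of $\Ycal_\xi$ is $P\cdot Z$ with $P\in\N[x]\setminus\{0\}$ and $Z\in\Zcal_\xi$. Its Archimedean class is $[x^{\deg P}][Z]$. So $\Ycal_\xi/\arch$ is a weakly increasing image of $\omega\times(\Zcal_\xi/\arch)$. By (BM2) its type is at most $\omega\od\omega^{\omega^{\gamma'+\lambda+3}}$, which is bounded by $\omega^{\omega^{\gamma'+\lambda+3}\cdot2}$ with room to spare.
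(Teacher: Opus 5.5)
Your second and third steps match the paper's argument; your degree reduction in the third step is a harmless sharpening. The first step, however, has a genuine gap in how you combine the labels. (BM2) bounds a weakly increasing image of a product $B\times X$ of two \emph{fixed} well orders. The quotient you want as second factor, $C_{<\xi}/{\equiv_{1/b}}$, depends on $b$, and no single choice works for all labels.

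\begin{itemize}
\item If $W\equiv_{1/b}W'$ and $b'\succ b$, then $b'\log W-b'\log W'=O(b'/b)$ need not be $O(1)$. So $(b',[W]_{1/b})\mapsto b'\log W \bmod O(1)$ is not well defined.
\item Since $B=\Sk_{<H}$ has no largest element, there is no finest relation among the $\equiv_{1/b}$ to fall back on.
\item Taking the bases themselves as second factor makes the map well defined. But then it only yields $|B|\od|C_{<\xi}|$, a total that the quotient hypotheses do not control.
\item The lexicographic sum $\sum_{b\in B}(C_{<\xi}/{\equiv_{1/b}})$ has acceptable size. But the map from it to $\Acal_\xi/O(1)$ is not order preserving: a larger label with a much smaller base gives a smaller atom. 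It therefore gives no bound on the order type of the image.
\end{itemize}

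The missing idea is to make the base relation uniform by fixing a label \emph{Archimedean class}. For every $b\arch b_0$, $\log(W/W')=O(1/b_0)$ gives $b\log(W/W')=O(1)$. So on $\{b\in B:b\arch b_0\}$ times the least representatives of $C_{<\xi}/{\equiv_{1/b_0}}$, the map is well defined and increasing in each argument. Then (BM2) and \eqref{g-eq-3.2} give your bound $\omega^{\omega^{\gamma'}\cdot3}$, but only for this block.

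Reassembling the blocks is the real work, because label-class blocks are neither convex in $\Acal_\xi/O(1)$ nor finitely many. The paper proceeds as follows.
\begin{itemize}
\item It partitions atoms by their Archimedean class $\mu\le\xi$. These blocks are convex in $\Acal_\xi/O(1)$ because atoms are infinite.
\item Lemma~\ref{g-lem:factor} shows that only finitely many label classes occur inside one $\mu$. A finite natural sum then stays below $\omega^{\omega^{\gamma'}\cdot3+1}$.
\item An ordered sum over at most $\omega^\lambda$ atom classes gives $\omega^{\omega^{\gamma'}\cdot3+1+\lambda}\le\omega^{\omega^{\gamma'+1}}$, using $\lambda<\omega^{\gamma'}$.
\end{itemize}
This is where Lemma~\ref{g-lem:factor} and $\gamma'\ge\lambda+4$ are actually needed. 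In your second step neither is required: atom classes lie in $\Lam(T)$ by definition, and infinite atoms keep their Archimedean class modulo $O(1)$.

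There is also a small slip in your second step. The inequality $\omega^{\gamma'+2}\od\omega^\lambda\le\omega^{\gamma'+2+\lambda}$ is false in general: for $\gamma'=\omega+4$ and $\lambda=\omega$ the left side is $\omega^{\omega\cdot2+6}$, while the right side is $\omega^{\omega\cdot2}$. The bound still holds, because (BM4) makes the natural power at the limit exponent $\omega^\lambda$ (recall $\lambda\ge1$) an ordinary power. That directly gives the exponent $\omega^{\gamma'+2}\cdot\omega^\lambda=\omega^{\gamma'+2+\lambda}$.
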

\begin{proof}
Partition atoms by their Archimedean class $\mu\le\xi$, and then by
the finitely many class factorizations in Lemma~\ref{g-lem:factor}.
Fix a label class with representative $b_0$.
For every label $b\arch b_0$, replacing a base by another base in
the same $\equiv_{1/b_0}$-class changes $b\log W$ by $O(1)$.
Choose the least representatives of these convex classes in
$C_{<\xi}$.  The map from the label and the chosen representative
to $b\log W\bmod O(1)$ is increasing in each argument.
Its image contains the block under consideration; equality with the
whole image is unnecessary.  Hence (BM2), \eqref{g-eq-3.2}, and
Lemma~\ref{g-lem:descent} give
\begin{equation}
 |\text{block}|
 \le\omega^{\omega^a}\od
      \omega^{\omega^{\gamma_{<\xi}}}\od
      \omega^{\omega^{g+1}}
 \le\omega^{\omega^{\gamma'}\cdot3}.
 \label{g-eq-4.1}
\end{equation}
Representatives equal to $1$ may be discarded: an atom base $>1$
cannot belong to its $\equiv_{1/b_0}$-class, since $b_0\ge x$.
For the same reason all atoms considered are infinite.

A finite union of the blocks in \eqref{g-eq-4.1} is bounded by
$\omega^{\omega^{\gamma'}\cdot3+1}$, uniformly in the finite number
of factorizations.  The atom-class blocks are convex in
$\Acal_\xi/O(1)$: atom classes separated by an infinite ratio
cannot be identified by a bounded change.
There are at most $\omega^\lambda$ such blocks.  Their ordered sum
is bounded by
\[
 \omega^{\omega^{\gamma'}\cdot3+1}\cdot\omega^\lambda
 =\omega^{\omega^{\gamma'}\cdot3+1+\lambda}
 \le\omega^{\omega^{\gamma'+1}},
\]
because $\gamma'\ge\lambda+4$ implies $\lambda<\omega^{\gamma'}$.

Logarithm identifies products modulo $\arch$ with their sums of atoms
modulo $O(1)$.  The latter atoms are positive in the ordered group
of germs modulo bounded germs.  Their Archimedean classes in this
group agree with their original classes, because the atoms are infinite.
Use (BM3), with the alphabet bound $\omega^\lambda$.
Since $\lambda\ge1$, $\omega^\lambda$ is a limit ordinal, and (BM4)
gives
\[
 \begin{aligned}
 |\Zcal_\xi/\arch|
 &\le 1+
  \bigl((\omega^{\omega^{\gamma'+1}})^\omega
                \bigr)^{\od\omega^\lambda}\\
 &=1+\omega^{\omega^{(\gamma'+2)+\lambda}}
 \le\omega^{\omega^{\gamma'+\lambda+3}}.
 \end{aligned}
\]
The initial $1$ accounts for the empty product and is absorbed by
the infinite limit majorant.
Finally $|\N[x]\setminus\{0\}|=\omega^\omega$.
Apply the quotient of the increasing map $(P,Z)\mapsto PZ$
to \eqref{g-eq-3.1} and absorb this polynomial factor into the displayed bound.
\end{proof}

\begin{proof}[Proof of Theorem~\ref{g-thm:ledger}: fine quotients]
Define recursively on the alphabet
\begin{equation}
 \gamma_\xi=
 \max\bigl(\Gamma,\sup_{\mu<\xi}\gamma_\mu\bigr)\hs(\lambda+4).
 \label{g-eq-4.2}
\end{equation}
With $\gamma'$ as in Lemma~\ref{g-lem:sector}, \eqref{g-eq-3.3} implies
\[
 \begin{aligned}
 Q_{\rho_0}(C_\xi)
 &\le |\Ycal_\xi/\arch|\od\Phi\\
 &\le\omega^{(\omega^{\gamma'+\lambda+3}\cdot2)
                         \hs\omega^{n+1}}\\
 &\le\omega^{\omega^{\gamma'+\lambda+3}\cdot3}
 \le\omega^{\omega^{\gamma'+\lambda+4}}
 \le\omega^{\omega^{\gamma_\xi}}.
 \end{aligned}
\]
Here $n+1\le\Gamma\le\gamma'$, and ordinary sum is bounded by
natural sum.

Put $\delta=\lambda\cdot2+8$.
For the rank $r$ of $\xi$ in the alphabet, induction gives
\[
 \gamma_\xi\le\Gamma+\delta\cdot(r+1).
\]
Indeed the earlier supremum is at most $\Gamma+\delta r$,
and (BM4) bounds its natural sum with $\lambda+4$ by
$\Gamma+\delta r+(\lambda+4)\cdot2
 \le\Gamma+\delta(r+1)$.
Since $r<\omega^\lambda$ and $\delta<\omega^{\lambda+1}$,
\begin{equation}
 \delta(r+1)
 <\omega^{\lambda+1}\cdot\omega^\lambda
 =\omega^{(\lambda+1)+\lambda}
 \le\omega^{\lambda\cdot2+1}.
 \label{g-eq-4.3}
\end{equation}
Thus $\gamma^*:=\sup_\xi\gamma_\xi\le\bar\gamma$.
Taking the increasing union of the initial fragments proves
the asserted fine-quotient bound.  The Archimedean quotient is coarser.
\end{proof}

\begin{proof}[Proof of Theorem~\ref{g-thm:ledger}: totals and deep quotients]
Put $\tau_0=\bar\gamma+1$ and define
\begin{equation}
 \tau_\xi=\max\bigl(\tau_0,\sup_{\mu<\xi}\tau_\mu\bigr)+1.
 \label{g-eq-4.4}
\end{equation}
We prove $|C_\xi|\le\omega^{\omega^{\omega^{\tau_\xi}}}$.
Write $\tau'=\max(\tau_0,\sup_{\mu<\xi}\tau_\mu)$ and $E=\omega^{\tau'}$.
Then $E>\omega^{\bar\gamma}$, $E>\lambda$, and $E$ is an infinite
additively indecomposable ordinal.

By class descent, $\Acal_\xi$ is a subset of the image of
$B\times(C_{<\xi}\setminus\{1\})$ under $(b,W)\mapsto b\log W$.
The induction hypothesis and (BM2) therefore give
\[
 |\Acal_\xi|
 \le\omega^{\omega^a}\od\omega^{\omega^E}
 \le\omega^{\omega^E\cdot2}.
\]
Applying (BM3)--(BM4) to the atoms, whose class alphabet has type
at most $\omega^\lambda$, yields
\[
 \begin{aligned}
 |\Zcal_\xi|
 &\le1+\bigl((\omega^{\omega^E\cdot2})^\omega
                                 \bigr)^{\od\omega^\lambda}\\
 &=1+\omega^{\omega^{(E+1)+\lambda}}
 \le\omega^{\omega^{E\cdot3}},\\
 |\Ycal_\xi|&\le\omega^{\omega^{E\cdot3}\cdot2}.
 \end{aligned}
\]
The slack in the exponent avoids any endpoint issue for the empty
product or the polynomial factor.

Crucially $\Ycal_\xi\subseteq C_\xi$, so
\begin{equation}
 |\Ycal_\xi/\arch|
 \le Q_{\rho_0}(C_\xi)
 \le\omega^{\omega^{\bar\gamma}}
 \le\omega^E.
 \label{g-eq-4.5}
\end{equation}
Every member of $C_\xi$ is a positive sum of these frames.
A second application of (BM3)--(BM4), now using the limit exponent
$\omega^E$, gives
\[
 \begin{aligned}
 |C_\xi|
 &\le\bigl((\omega^{\omega^{E\cdot3}\cdot2})^\omega
                                      \bigr)^{\od\omega^E}\\
 &=\omega^{\omega^{(E\cdot3+1)+E}}
 =\omega^{\omega^{E\cdot4}}
 <\omega^{\omega^{E\cdot\omega}}
 =\omega^{\omega^{\omega^{\tau'+1}}}.
 \end{aligned}
\]
The finite $1$ is absorbed in $(E\cdot3+1)+E$, since $E$ is
infinite and additively indecomposable.
Recursion \eqref{g-eq-4.4} gives
$\tau_\xi\le\tau_0+\operatorname{rank}(\xi)+1$.
The increasing union of the $C_\xi$ consequently has the asserted
total bound with $\bar\tau=\tau_0+\omega^\lambda$.

Finally \eqref{g-eq-3.2}, the fine bound, and continuity on the initial fragments
give
\[
 Q_{1/b}(\Sk_{<T})
 \le\omega^{\omega^{\bar\gamma}}\od\omega^{\omega^{g+1}}
 \le\omega^{\omega^{\max(\bar\gamma,g+1)+1}}.
 \qedhere
\]
\end{proof}

\section{Support and finite representations}

\begin{lemma}[Logarithmic support and truncation]\label{support-expanded}
Every $\log W$, $W\in\Sk$, belongs to
$\mathcal H=K+K\log x$, has support in $M_1=G\cup G\log x$,
and has an integer coefficient at $\log x$. The same support
restriction holds for atoms $b\log W$ and finite sums of atoms
and polynomial logarithms.
For every positive $s\in\T$, truncation retaining precisely the
monomials $m\succ s$ is linear and weakly increasing, and
\[
 \tr_{\succ s}F=\tr_{\succ s}F'
       \quad\Longleftrightarrow\quad F-F'=O(s).
\]
It preserves $\mathcal H$ and satisfies the monomial shift
identity of Proposition~\ref{sf:prop:trunc}. Products and coefficients
are well defined for infinite supports. For $f,g\in\Sk$,
\begin{equation}
 f\prec g\quad\Longrightarrow\quad f=O(g/x).
 \label{l-eq-2.4}
\end{equation}
\end{lemma}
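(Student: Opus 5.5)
The lemma collects statements already proved, so the plan is to assemble them and add one small closure argument.

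For the first assertion, apply Proposition~\ref{sf:prop:logsupport} to $W$. It gives $\log W\in\mathcal H$, support in $M_1$, and an integer coefficient at $\log x$.

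For atoms $b\log W$ with $b\in\Sk$:
\begin{itemize}
\item By Theorem~\ref{sf:thm:Sk-support}, $b\in K^\up+\N\subseteq K$.
\item Since $\mathcal H$ is a $K$-module (Proposition~\ref{sf:prop:logsupport}), $b\log W=bA+(bB)\log x\in\mathcal H$, where $\log W=A+B\log x$ with $A,B\in K$.
\item The support of $bA$ lies in $G$, because $G$ is a group and $K=\bigcup_n\R((G_n))$ with products formed inside a common level $K_n$ (Lemma~\ref{sf:lem:levels}).
\item Likewise the support of $(bB)\log x$ lies in $G\log x$.
\end{itemize}
So the support restriction to $M_1$ survives multiplication by labels. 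The integer-coefficient claim is not asserted for atoms, so nothing more is needed there. Finite sums of atoms and of $\log P$ with $P\in\N[x]\setminus\{0\}\subseteq\Sk$ stay in $\mathcal H$ with support in $M_1$ by linearity, since a finite union of subsets of $M_1$ is a subset of $M_1$.

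The truncation statements are Proposition~\ref{sf:prop:trunc} verbatim: linearity, monotonicity, the equivalence \eqref{sf:eq:trunc-equivalence}, preservation of $\mathcal H$, and the shift identity $\tr_{\succ hs}(hF)=h\tr_{\succ s}F$. For the claim that products and coefficients are well defined for infinite supports, I would argue as follows. Any two elements of $\mathcal H$ lie in a common finite Hahn level $T_{h}(y)$ by Lemma~\ref{sf:lem:levels} and (LE3)--(LE4). Hahn multiplication there is the usual convolution over reverse-well-ordered supports, with finitely many representations of each monomial. The coefficient $\coef_m$ is read off the unique Hahn expansion of (LE2). No sums across unbounded levels are formed, in line with the remark after (LE4).

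Finally, \eqref{l-eq-2.4} is the first assertion of Corollary~\ref{sf:cor:gaps}. Its proof compares leading monomials $\lm(f),\lm(g)\in G$ (by Theorem~\ref{sf:thm:Sk-support} both lie in $K^\up+\N$). If $f\prec g$, then $\lm(g)/\lm(f)$ is a monomial of $G$ greater than $1$, so by Lemma~\ref{sf:lem:gap} it is at least $x$, and $f\arch\lm(f)\le\lm(g)/x\arch g/x$.

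The only point needing care, and the one I expect to be the main obstacle, is checking that multiplying $\log W$ by an infinite label $b$ never moves a monomial out of $M_1$. In particular, no term of $bB$ may combine with $\log x$ to land in $G$. This holds because $bB\in K$ has support in $G$ and the cosets $G$ and $G\log x$ are disjoint, as noted in the proof of Proposition~\ref{sf:prop:logsupport}.
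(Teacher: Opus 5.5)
Your proposal is correct and matches the paper's proof, which likewise just cites Proposition~\ref{sf:prop:logsupport}, Proposition~\ref{sf:prop:trunc}, the Hahn product and summability rules of (LE4) inside one finite level, and Corollary~\ref{sf:cor:gaps}. Your explicit check that multiplying by a label $b\in K$ keeps the support inside $G\cup G\log x$ is a detail the paper leaves implicit; the disjointness of the two cosets is not actually needed there, because the support of a sum always lies in the union of the supports.
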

\begin{proof}
The support and module assertions are
Proposition~\ref{sf:prop:logsupport}, and the truncation assertions
are Proposition~\ref{sf:prop:trunc}. All series in any fixed
calculation lie in one finite Hahn level. The Hahn product and
summability rules in (LE4) give the coefficient assertions.
Equation~\eqref{l-eq-2.4} is Corollary~\ref{sf:cor:gaps}.
A zero truncation is omitted when forming a positive summand set.
\end{proof}

\begin{lemma}[Finite representations]\label{l-lem:representations}
If $A$ is a well-ordered positive subset of an ordered abelian group,
each fixed element has only finitely many representations as a
finite multiset sum of elements of $A$.
\end{lemma}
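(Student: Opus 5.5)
The plan is to argue by contradiction, using well-foundedness together with a finite-branching tree. The key simplification is positivity: every element of $A$ is positive, so the elements of $A$ that can occur in a representation of a fixed $g$ are bounded by $g$. Moreover, a well-ordered positive set has a least element $a_0>0$.

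Fix $g$ and suppose $g$ has infinitely many representations $g=a_1+\cdots+a_k$ as multisets. Two subcases arise.

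\emph{Unbounded length.} First I will show that the lengths $k$ of the representations need not be bounded in general, because the group may be non-Archimedean; for instance $A=\{1\}$ against an infinite $g$ has no representation at all, but longer sums do occur in other groups. So I will not try to bound $k$ directly. Instead I will build a tree. List each representation in decreasing order $a_1\ge a_2\ge\cdots\ge a_k$. The nodes are the prefixes $(a_1,\dots,a_j)$ of such lists, ordered by extension. Since there are infinitely many representations, the tree is infinite.

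\emph{Branching and branches.} Finite branching may fail, since a node may have infinitely many children. I handle this with Lemma~\ref{sf:lem:subsequence}-style well-order arguments rather than König's lemma.

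\emph{Main argument (Ramsey/Higman style).} Take infinitely many distinct representations and view each as a decreasing finite sequence in $A$. Finite decreasing sequences in a well order, ordered by the following rule, form a well-quasi-order: $s\le t$ if $s$ embeds into $t$ as a sub-multiset with pointwise domination. This is Higman's lemma applied to the wqo $A$ with multiset embedding. Among the infinitely many representations, there are therefore two distinct ones $s\ne t$ with an injection $\iota:s\to t$ satisfying $a\le\iota(a)$. Summing gives
\[
g=\sum s\le\sum_{a\in s}\iota(a)\le\sum t=g,
\]
where the second inequality uses positivity of the unmatched elements of $t$. Hence every inequality is an equality. So $a=\iota(a)$ for every $a$, and $t\setminus\iota(s)$ is empty since its elements are positive. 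Thus $s=t$ as multisets, a contradiction.

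The main obstacle is justifying the wqo step cleanly. I would derive it from Lemma~\ref{sf:lem:subsequence} plus the standard minimal-bad-sequence argument for Higman's lemma. Alternatively, I would cite (BM2)-type facts from \cite{BM}, which treat finite multisets over a well order as a well-partial order whose maximal order type is the natural-sum bound.
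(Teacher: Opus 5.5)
Your main argument is correct, but it follows a different route from the paper. The paper does not use Higman's lemma. It takes the well-ordering of $\Sigma^0A$ from (BM3) and argues by descent in two steps. First, if infinitely many distinct summand values occurred among the representations of $t$, an increasing sequence $a_i$ of them would make $t-a_i$ strictly decreasing in $\Sigma^0A$. Second, if one value $a$ occurred with unbounded multiplicity, $t-na$ would decrease in $\Sigma^0A$. Hence only finitely many values occur, each with bounded multiplicity, and so there are only finitely many multisets.

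You instead show that two representations of the same element that are comparable under dominated multiset embedding must coincide. Each $\iota(a)-a\ge0$ and the unmatched summands are positive, yet everything sums to $0$. The representations therefore form an antichain, and Higman's lemma makes that antichain finite. Your route never needs the well-ordering of $\Sigma^0A$; it is the classical proof of the finiteness half of Neumann's lemma. The price is importing Higman's lemma. The paper's proof is only a few lines because it reuses (BM3), which the ordinal counting needs anyway.

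Two clean-ups. Delete the opening paragraphs on lengths and the prefix tree. They are abandoned, and the length claim is wrong in the relevant sense: for a fixed $g$ the lengths \emph{are} bounded, as a consequence of the lemma itself. Also cite Higman's lemma rather than (BM2) for the wqo step. (BM2), as recalled here, gives natural-product bounds for monotone images of products, not the well-quasi-ordering of finite multisets.
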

\begin{proof}
The set $\Sigma^0 A=\{0\}\cup\Sigma^+A$ is well ordered by \eqref{ext-sums}.
For a fixed sum $t$, infinitely many possible summand values would
give an increasing sequence $a_i$ with
$t-a_i\in\Sigma^0A$ strictly decreasing.  There are therefore finitely
many possible summand values.  The multiplicity of any one value $a$
is also bounded: otherwise $t-na\in\Sigma^0A$ gives a decreasing
sequence.  There are consequently finitely many multisets.
\end{proof}

Both the atom family and its nonzero truncations are well ordered:
the atom family is an increasing image of a product of two well
orders.  The same is true after adjoining the polynomial logarithms
$\log P$.  Lemma~\ref{l-lem:representations} therefore applies to a
fixed logarithmic truncation of frames. Separately, the positive
frame family is well ordered, so the lemma applies to fixed
additive truncations of sums of frames as well.  We call each resulting finite list of prescribed nonzero
summand truncations a \emph{cell}.  A cell has finitely many slots,
with multiplicities included.

\section{Finite polynomial-logarithmic scales}

\begin{lemma}[Polynomial-logarithmic scales]\label{f-lem:scales}
For every fixed integer \(r\ge1\),
\begin{equation}
 M_1\cap[1,x^r]
 =\{1,\log x,x,x\log x,\ldots,
                         x^{r-1},x^{r-1}\log x,x^r\}.
 \label{f-eq-2.3}
\end{equation}
In particular there are \(2r\) scales in this interval strictly
above \(1\).
\end{lemma}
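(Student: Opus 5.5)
This lemma repeats equation~\eqref{sf:eq:finite-scales} of Proposition~\ref{sf:prop:logsupport}. The plan is to cite that proposition and then count. For completeness, I would also recall the argument from the monomial gap, Lemma~\ref{sf:lem:gap}.

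First, the inclusion from right to left. Each listed element is visibly in $M_1=G\cup G\log x$. The powers $x^k$ lie in $G_0\subseteq G$, and $x^k\log x\in G\log x$. Each also lies in $[1,x^r]$, because $1\le\log x<x$ and hence $x^{k}\le x^k\log x\le x^{k+1}$ for $0\le k\le r-1$.

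Next, the inclusion from left to right. Take $m\in M_1$ with $1\le m\le x^r$.

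If $m=g\in G$, then $|\log g|\le r\log x=O(\log x)$. So the second alternative of Lemma~\ref{sf:lem:gap} fails, and $g=x^k$ for some $k\in\Z$. The bounds $1\le x^k\le x^r$ force $0\le k\le r$.

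If $m=g\log x$ with $g\in G$, then $\log g=\log m-\log\log x$. This gives $|\log g|\le r\log x+\log\log x=O(\log x)$, so again $g=x^k$. Now $1\le x^k\log x\le x^r$ excludes $k<0$, since then $x^k\log x\le(\log x)/x<1$. It also excludes $k\ge r$, since then $x^k\log x>x^r$. Hence $0\le k\le r-1$.

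The only point needing care is this bound on $|\log g|$ in the logarithmic coset. It comes down to $\log\log x\prec\log x$, which holds in $\T$.

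Finally, the count. The elements strictly above $1$ are $x^k\log x$ for $0\le k\le r-1$, and $x^k$ for $1\le k\le r$. That gives $r+r=2r$ elements, and they are distinct because the cosets $G$ and $G\log x$ are disjoint (the proof of Lemma~\ref{sf:lem:gap} notes $\log x\notin G$).

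I expect no real obstacle. The lemma is a restatement placed here for local reference in the counting that follows.
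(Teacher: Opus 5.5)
Your proposal is correct and takes the same route as the paper. The paper's proof simply cites the finite-scale formula of Proposition~\ref{sf:prop:logsupport}, which is itself proved by applying the monomial gap (Lemma~\ref{sf:lem:gap}) to $g$ and to $g\log x$, exactly as you do, and then counts the $2r$ listed members above $1$. Your explicit bound $|\log g|\le r\log x+\log\log x$ in the logarithmic coset, and your use of disjointness of $G$ and $G\log x$ for distinctness, just fill in details the paper leaves implicit.
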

\begin{proof}
This is the finite-scale formula of
Proposition~\ref{sf:prop:logsupport}; its displayed list has
$2r$ members strictly greater than $1$.
\end{proof}

\section{A coefficient induction through the shallow scales}

We first isolate the product calculation used in every coefficient
slot.  The fixed leading prefix will be retained throughout the calculation.

\begin{lemma}[One coefficient of an atom]\label{f-lem:atom}
Let
\[
 a=bL,\qquad b\in\Sk,\quad b\ge x,\quad L=\log W,\quad
                  1<W\in\Sk,
\]
and fix \(v\in M_1\) and a nonzero prefix
\(\tr_{\succ v}(a)=\tau\).
There are only finitely many possible configurations of:
\begin{equation}
 m_1=\lm(b),\quad s_1=\lm(L),\quad
 \lc(b),\quad\lc(L),\quad
 \tr_{\succ v/s_1}b,\quad\tr_{\succ v/m_1}L.
 \label{f-eq-3.1}
\end{equation}
Inside each configuration, the coefficient of \(a\) at \(v\) is
\begin{equation}
 c_1\beta+\beta_1c+c_0,
 \label{f-eq-3.2}
\end{equation}
where the fixed positive numbers \(\beta_1,c_1\) are the leading
coefficients of \(b,L\), while
\[
 \beta=\coef_{v/s_1}(b),\qquad
 c=\coef_{v/m_1}(L).
\]
The possible \(\beta\)'s are \(\N\)-like.
The possible \(c\)'s are \(\N\)-like if \(v/m_1\le1\), or if the
coefficient property of Proposition~\ref{f-prop:coefficients} is
already known at the smaller scale \(v/m_1\).
\end{lemma}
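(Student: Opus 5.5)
Fix the atom $a=bL$ and expand both factors as Hahn series. By Theorem~\ref{sf:thm:Sk-support}, $b\in K^\up+\N$ with leading monomial $m_1\in G$, $m_1\ge x$. By Proposition~\ref{sf:prop:logsupport}, $L\in\mathcal H$ has support in $M_1$ and leading monomial $s_1$.

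\emph{Finiteness of configurations.} The leading monomial of $a$ is $m_1s_1$, and $\lc(a)=\lc(b)\lc(L)$. Both are read off from the nonzero prefix $\tau$, since $\tau$ retains the leading term of $a$. Thus the product of the two leading monomials and the product of the two leading coefficients are fixed. For the monomials I would argue as in Lemma~\ref{g-lem:factor}. The classes $[b]$ form a well-ordered set (images of $\Sk$), and so do the classes $[L]=[\log W]$. A fixed product of classes therefore admits only finitely many factorizations; otherwise one factor increases strictly while the other decreases. Hence there are finitely many pairs $(m_1,s_1)$.

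For the leading coefficients, $\lc(b)$ ranges over an $\N$-like set: take $s=m_1$, $U=0$ in Proposition~\ref{sf:prop:coef}. The same holds for $\lc(L)$: apply Proposition~\ref{sf:prop:logcoef} after noting $\log W=\lc(L)s_1+o(s_1)$, or treat $s_1\ge 1$ by a rescaling with $W$ as reference. Both coefficients are positive and their product is fixed, so both are bounded above. Each therefore takes only finitely many values.

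For the two truncations, expand
\[
 \tau=\tr_{\succ v}(bL).
\]
Monomials of $b$ above $v/s_1$, multiplied by $s_1$, land above $v$. I would then show by descending recursion along the support that $\tr_{\succ v/s_1}b$ and $\tr_{\succ v/m_1}L$ are determined up to finitely many choices. At each stage, subtracting the known cross terms leaves the next coefficient of $b$ times $\lc(L)$ plus the next coefficient of $L$ times $\lc(b)$, and this sum is fixed. The monomials of $b$ above $v/s_1$ and of $L$ above $v/m_1$ form reverse-well-ordered sets. Only finitely many products fall into any given monomial of $\tau$, by the Hahn--Neumann finiteness recalled after (LE4). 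To turn this into finiteness of configurations, the cleanest route is Lemma~\ref{l-lem:representations}. The pair of truncations is an element of a well-ordered product, mapped increasingly (after fixing leading data) to the fixed value $\tau$. I would then use a K\"onig-type argument along the support, via the finite-branching tree principle, together with well-ordering to exclude infinitely many distinct pairs. \textbf{This step is the main obstacle}: the nonnegativity of the coefficients and the increasing nature of the map in each argument must be used to bound the number of preimages.

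\emph{Coefficient formula.} Once a configuration is fixed, write $b=\tr_{\succ v/s_1}b+\beta\,(v/s_1)+o(v/s_1)$ and similarly for $L$ at $v/m_1$. Multiply out and collect the coefficient at $v$. The term $\beta\cdot c_1$ comes from $\beta(v/s_1)\cdot c_1s_1$, and the term $\beta_1c$ comes from $\beta_1m_1\cdot c(v/m_1)$. Every other contribution pairs monomials strictly above both thresholds with a product equal to $v$. These products depend only on the fixed truncations, so they give a constant $c_0$.

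\emph{The $\N$-like claims.} For $\beta$, apply Proposition~\ref{sf:prop:coef} with $U=\tr_{\succ v/s_1}b$ fixed and $s=v/s_1$; the case $U=0$ forces $\beta=\beta_1$ or uses the second clause. For $c$ with $v/m_1\le1$, apply Proposition~\ref{sf:prop:logcoef} with prefix $\tr_{\succ v/m_1}L$ and $s=v/m_1$. For $v/m_1>1$, the hypothesis at the smaller scale is exactly what is invoked.
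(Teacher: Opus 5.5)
Your handling of the leading monomials, of the coefficient formula \eqref{f-eq-3.2}, and of the two $\N$-like clauses agrees with the paper. The core of the lemma, however, is left unproved: that only finitely many pairs $(\tr_{\succ v/s_1}b,\tr_{\succ v/m_1}L)$ occur. The route you sketch cannot supply this, for four reasons.

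\begin{itemize}
\item A descent monomial by monomial along the supports need not terminate. The supports are reverse well ordered but may be infinite, so a K\"onig-type argument gives at best an infinite branch of partial determinations, and that is no contradiction.
\item At each product monomial you obtain a single linear relation among unknown coefficients of $b$ and $L$. It has infinitely many real solutions unless discreteness of the coefficients of $\log W$ is already known at the relevant scales. That is not available here: Proposition~\ref{f-prop:coefficients} is itself proved from this lemma, and only for scales up to $x^r$.
\item Positivity does not help, since interior coefficients can be negative. For instance $\log\bigl((2^x+1)^{8^x}\bigr)=8^xx\log2+4^x-\tfrac12\,2^x+\cdots$.
\item Lemma~\ref{l-lem:representations} concerns exact finite additive representations. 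Here you only know $B_+L_+=\tau+O(v)$, where $B_+=\tr_{\succ u}b$, $L_+=\tr_{\succ z}L$, $u=v/s_1$ and $z=v/m_1$.
\end{itemize}

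The missing idea is to treat the whole prefixes as elements of two well-ordered families. These are the weakly increasing truncation images of the labels and of $\{\log W\}$. You then rerun the increasing-versus-decreasing argument you already used for the leading monomials.

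First, $B_+$ determines $L_+$. Two distinct prefixes above $z$ differ by something $\succ z$. Multiplying by $B_+\arch m_1$ gives something $\succ v$, which contradicts $B_+L_+=\tau+O(v)$ holding for both.

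Second, suppose infinitely many $B_+$ occurred. Take $B_0<B_1<\cdots$ with partners $L_i$. For $i<j$,
\[
 L_i-L_j=\frac{\tau(B_j-B_i)}{B_iB_j}+O(z).
\]
The main term is $\arch(s_1/m_1)(B_j-B_i)\succ(s_1/m_1)u=z$, because $B_j-B_i$ is a nonzero series supported above $u$. Hence $L_0>L_1>\cdots$, contradicting well ordering.

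There are also two smaller points.
\begin{itemize}
\item Your claim that $\{\lc(L)\}$ is $\N$-like is unjustified when $s_1>1$. Proposition~\ref{sf:prop:logcoef} needs $s\le1$, and rescaling by $1/s_1$ produces an exponent below $1$, outside Theorem~\ref{sf:thm:discrete}. You do not need it: $\lc$ is weakly increasing on a family with fixed leading monomial, so both leading-coefficient sets are well ordered and the product argument applies.
\item The case $U=0$ in your $\beta$ clause never arises. Since $m_1s_1=\lm(\tau)\succ v$, we get $m_1/u=m_1s_1/v\succ1$, which forces $B_+\ne0$.
\end{itemize}
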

\begin{proof}
Write $M=\lm(\tau)$ and $t=\lc(\tau)>0$.
Since $\tau\ne0$ is the truncation above $v$, we have $M\succ v$.
The leading term of $bL$ is unchanged by this truncation, so
\[
 m_1s_1=M,\qquad \beta_1c_1=t,
 \quad\hbox{where }\beta_1=\lc(b)>0,
                         \ c_1=\lc(L)>0.
\]
The leading-monomial sets of $\Sk_{\ge x}$ and
$\{\log W:W\in\Sk,\ W>1\}$ are well ordered: both underlying
families are well ordered, and taking the leading monomial of a
positive series is weakly increasing.  If $m_1s_1=M$ admitted
infinitely many pairs, their first coordinates would be distinct;
an increasing sequence of them would force a strictly decreasing
sequence of second coordinates.  Hence only finitely many leading
monomial pairs occur.

Fix one such pair.  On a family with a fixed leading monomial,
taking the leading coefficient is weakly increasing.  The possible
$\beta_1$ and $c_1$ therefore belong to well-ordered subsets of
$\R_{>0}$.  An infinite set of pairs with $\beta_1c_1=t$ would
again have increasing first coordinates and decreasing second
coordinates.  Thus only finitely many leading-coefficient pairs
occur.  Fix one of them for the remainder of the configuration
argument.

Set $u=v/s_1$, $z=v/m_1$ and
\[
 B_+=\tr_{\succ u}b,\qquad L_+=\tr_{\succ z}L.
\]
Both are nonzero and positive, with leading terms
$\beta_1m_1$ and $c_1s_1$, because
$m_1/u=s_1/z=M/v\succ1$.  Their separate families are well
ordered, being weakly increasing truncation images of the label
family and logarithm family, respectively.  The remainder bounds
$b-B_+=O(u)$ and $L-L_+=O(z)$ give
\[
 bL-B_+L_+
 =(b-B_+)L_++B_+(L-L_+)+(b-B_+)(L-L_+)=O(v),
\]
because $us_1=zm_1=v$ and $uz=v^2/M\prec v$.
As $bL=\tau+O(v)$, it follows that
$B_+L_+=\tau+O(v)$.

For a fixed $B_+$, at most one $L_+$ can occur.  Indeed, a
nonzero difference of two such logarithmic prefixes is supported
strictly above $z$ and hence strictly dominates $z$.  Its product
with $B_+\arch m_1$ strictly dominates $v$, whereas subtracting
the two displayed product identities makes that product $O(v)$.

If infinitely many $B_+$ occurred, their well ordering would give
$B_0<B_1<\cdots$.  Write $L_i$ for the uniquely associated prefix
and $E_i=B_iL_i-\tau$, so $E_i=O(v)$.
For every fixed $i<j$, division gives the exact identity and bound
\begin{equation}
 \begin{split}
 L_i-L_j
 &=\frac{\tau(B_j-B_i)}{B_iB_j}
                  +\frac{E_i}{B_i}-\frac{E_j}{B_j}\\
 &=\frac{\tau(B_j-B_i)}{B_iB_j}+O(z)>0.
 \end{split}
 \label{f-eq-3.3}
\end{equation}
Here each error quotient is $O(v/m_1)=O(z)$, since
$B_i,B_j\arch m_1$.  The positive main term satisfies
\[
 \frac{\tau(B_j-B_i)}{B_iB_j}
 \arch\frac{s_1}{m_1}(B_j-B_i)
 \succ\frac{s_1}{m_1}u=z:
\]
$B_j-B_i>0$ is a nonzero difference of prefixes supported
strictly above $u$.  Thus the main term dominates the full error,
with no uniform choice of its real $O$-constant needed as $i,j$
vary.  Equation \eqref{f-eq-3.3} would make $L_0>L_1>\cdots$,
contradicting the well ordering of the prefix family.  There are
therefore finitely many pairs $(B_+,L_+)$, proving the asserted
finiteness of all configurations in \eqref{f-eq-3.1}.

Now fix a configuration.  The ambient monomials $u,z$ are valid
coefficient cutoffs even if $u\notin G$; in that case
$\coef_u(b)=0$.  Put
$\beta=\coef_u(b)$, $c=\coef_z(L)$ and write
\[
 b=B_++\beta u+r_b,\qquad
 L=L_++cz+r_L,\qquad r_b=o(u),\quad r_L=o(z).
\]
Since
$L_+=c_1s_1+o(s_1)$ and
$B_+=\beta_1m_1+o(m_1)$, multiplication yields
\[
 bL=B_+L_++(c_1\beta+\beta_1c)v+o(v).
\]
Indeed $\beta uL_+=c_1\beta v+o(v)$,
$czB_+=\beta_1cv+o(v)$, products involving $r_b$ or $r_L$
are $o(v)$, and the two boundary terms multiply to
$\beta c uz=o(v)$.  Taking the Hahn coefficient at $v$ gives
\eqref{f-eq-3.2} with the fixed real constant
$c_0=\coef_v(B_+L_+)$.  Its coefficient sum is finite by the
Hahn support argument above, even when either prefix is infinite.

The expansion $b=B_++\beta u+o(u)$ has a fixed prefix
$B_+\succ u$. Proposition~\ref{sf:prop:coef}, applied to the
actual Skolem labels in this configuration, therefore shows that
the possible $\beta$ form an $\N$-like set.
If $z\le1$, apply Proposition~\ref{sf:prop:logcoef} to the
expansions $\log W=L_++cz+o(z)$ with their fixed prefix $L_+$.
It gives the same conclusion for the possible $c$.
Both propositions use actual Skolem references and exponents in $\T$,
so no additional ambient-field assumption enters this step.
If $z>1$, then $z=v/m_1\in M_1$, since $m_1\in G$, and
$z\le v/x\prec v$.  The assumed coefficient property at this
smaller scale, applied with the fixed prefix $L_+$, gives the
remaining assertion.
\end{proof}

\begin{proposition}[Finite-scale coefficient induction]
\label{f-prop:coefficients}
Fix an integer \(r\ge1\).  For every \(v\in M_1\cap(1,x^r]\), the following
property holds:
\begin{quote}
if a family \(\calW\subseteq\Sk_{>1}\) has a fixed truncation
\(\tr_{\succ v}\log W\), then the coefficients
\(\coef_v(\log W)\), \(W\in\calW\), form an \(\N\)-like set.
\end{quote}
\end{proposition}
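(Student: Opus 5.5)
The plan is an induction on $v$ running through the finite list \eqref{f-eq-2.3}. By Lemma~\ref{f-lem:scales}, $M_1\cap(1,x^r]$ consists of the $2r$ scales $\log x<x<x\log x<\dots<x^r$, so the induction has finitely many steps and each step may assume the property at every smaller scale in $(1,v)$.

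Fix $v$ and a family $\calW$ with a common truncation $\tau=\tr_{\succ v}\log W$. We decompose each $W$ into frames, $W=\sum_k P_k\prod_j W_{kj}^{b_{kj}}$, and write
\[
 \log W=\log D+\log\Bigl(1+\sum_{k\ne k_0}D_k/D\Bigr),
\]
where $D$ is the dominant frame. The second term is a function of the ratios $D_k/D$. Each such ratio is either $\arch 1$, or is $O(1/x)$ by \eqref{l-eq-2.4}. Its contribution at a monomial $v>1$ therefore comes only from a bounded real constant, so it cannot reach $v$. More precisely, everything not supported above $1$ is irrelevant: $v>1$ means only $\log D=\log P+\sum_j b_j\log W_j$ matters at $v$, plus the real term $\log(1+\sum\cdots)$, which is $O(1)$.

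Hence $\coef_v(\log W)=\coef_v(\log P)+\sum_j\coef_v(b_j\log W_j)$. Here $\log P=\deg P\cdot\log x+O(1)$ contributes an integer only at $v=\log x$.

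Next, the fixed truncation $\tau$ constrains the atom truncations. By the remark after Lemma~\ref{l-lem:representations}, $\tau$ has finitely many representations as a multiset of nonzero atom truncations $\tr_{\succ v}(b_j\log W_j)$ together with $\tr_{\succ v}\log P$. So $\calW$ splits into finitely many cells, and within a cell the number of slots is fixed. Slots whose truncation above $v$ is zero have atoms that are $O(v)$. Since atoms are infinite with $b\ge x$, we need to control their coefficient at $v$ as well. I would treat these by noting $a=O(v)$ with $a\succeq x\log W\succeq x$, so $\coef_v(a)$ equals the leading coefficient when $a\arch v$ and is $0$ otherwise. These coefficients form an $\N$-like set by Proposition~\ref{sf:prop:coef} (positive case, $U=0$) applied to the label family, using that leading coefficients of $b\log W$ are products $\lc(b)\lc(\log W)$ with finitely many monomial factorizations. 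This zero-prefix case is the step I expect to be the main obstacle, and I may need to absorb it into a finite number of leading-monomial configurations as in the first part of Lemma~\ref{f-lem:atom}.

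For nonzero slots, Lemma~\ref{f-lem:atom} gives finitely many configurations. In each, the coefficient is $c_1\beta+\beta_1c+c_0$, where $\beta$ ranges over an $\N$-like set and $c$ ranges over an $\N$-like set. The set of $c$ is $\N$-like either directly (when $v/m_1\le1$) or by the induction hypothesis at the smaller scale $v/m_1\in M_1\cap(1,v)$, since $m_1\ge x$. With $\beta_1,c_1>0$ fixed, the set of values $c_1\beta+\beta_1c+c_0$ is a fixed finite sum of positively scaled $\N$-like sets, hence $\N$-like.

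Summing over the fixed finite number of slots in a cell gives an $\N$-like set, by closure of $\N$-like sets under fixed finite sums. The union over finitely many cells and configurations is again $\N$-like. A subtlety is that different slots are not independent, but an $\N$-like superset suffices, so independence is not needed.
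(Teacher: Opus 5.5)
Your architecture is the paper's: a finite induction over the scales in \eqref{f-eq-2.3}, passage to a dominant frame (where $\log(W/D)\in[0,\log k]$ is $O(1)$ and so invisible at $v\succ1$), finitely many cells of nonzero truncated atoms, Lemma~\ref{f-lem:atom} with the induction hypothesis at $v/m_1$ for nonzero slots, and $\log P$ contributing only at $\log x$. The gap lies in the atoms with $\tr_{\succ v}a=0$, and it is larger than the per-atom step you flag.

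\textbf{Unbounded number of zero-prefix atoms.} These atoms are \emph{not} slots of a cell. A cell lists only nonzero truncations, and the number of zero-prefix atoms in a frame is unbounded. For example, at $v=x$ the frames $\prod_{j\le N}n_j^{x}$ with $n_j\ge2$ all have prefix $0$ above $x$, while their coefficient is $\sum_{j\le N}\log n_j$ with $N$ arbitrary. So your final step, closure under fixed finite sums over a fixed finite number of slots, does not cover the aggregate contribution of these atoms. You need two things:
\begin{enumerate}
\item each such atom contributes either $0$ or an element of one fixed \emph{positive} $\N$-like set $S$;
\item the observation following \eqref{f-eq-2.4}: since $\min S>0$, a bounded sum has bounded length, so the set of all finite sums from $S$ is $\N$-like.
\end{enumerate}

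\textbf{The per-atom claim.} Producing $S$ is not done by the argument you sketch. Proposition~\ref{sf:prop:coef} controls only the label factor $\lc(b)$. The paper gets this even more cheaply: $b\log A=O(x^r)$ and $\log A\ge\log2$ give $b=O(x^r)<2^x$, so $b$ is a polynomial with $\lm(b)=x^j$, $1\le j\le r$, and a positive integer leading coefficient. The other factor, $\lc(\log A)$ at the scale $s_1=v/\lm(b)\preceq v/x$, is not covered by any proposition you cite. You need:
\begin{itemize}
\item for $s_1>1$, the induction hypothesis at $s_1$, applied to the family with \emph{zero} prefix above $s_1$;
\item for $s_1=1$, the observation that $A$ is a constant integer, so the coefficient lies in $\log\N_{\ge2}$.
\end{itemize}
There are finitely many factorizations $v=\lm(b)\lm(\log A)$. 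A product of two positive $\N$-like sets is $\N$-like, because each factor has a positive minimum, which bounds the other factor. Together these give the required $S$, and with the finite-sum closure above the zero-prefix part is finished exactly as in the paper.
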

\begin{proof}
Induct through the finite ordered set in Lemma~\ref{f-lem:scales}.
For each \(W\), choose a dominant unrestricted frame \(D\) in a
component decomposition.  Then
\begin{equation}
 D\le W\le kD,\qquad \log W-\log D=O(1),
 \label{f-eq-3.6}
\end{equation}
where \(k\) may depend on \(W\).
Since \(v>1\), the logarithms of these frames have the same prefix
above \(v\), and their \(v\)-coefficient equals that of \(\log W\).

The fixed prefix of a frame logarithm has only finitely many
representations by nonzero truncated atoms.  Indeed, the truncated
atom family is a weakly increasing image of a well order, and hence
is well ordered; the positive-sum theorem of \cite[\S4]{BM} makes
its finite-sum family well ordered.  Infinitely many possible
summands in a representation of a fixed total would give increasing
summands and decreasing complementary sums.  Once the finitely many
summand values are fixed, their multiplicities are also bounded.
Thus there are finitely many cells and finitely many nonzero atom
slots in each cell.

For a nonzero slot apply Lemma~\ref{f-lem:atom}.
If \(z=v/m_1>1\), then \(m_1\ge x\) implies \(z\le v/x\prec v\).
Moreover \(z\in M_1\), because \(m_1\in G\).
It is therefore one of the earlier scales in
Lemma~\ref{f-lem:scales}, and the induction hypothesis applies.
The possible coefficient in each nonzero slot is consequently
\(\N\)-like.

It remains to include all atoms whose truncation above \(v\) is
zero.  Only atoms with leading monomial exactly \(v\) affect the
coefficient at \(v\).
For such an atom \(b\log A\), the bound
\(b\log A=O(v)\le O(x^r)\) and \(\log A\ge\log2\) imply
\(b=O(x^r)\).  Since \(x^r<2^x\), the basic identity
\(\Sk_{<2^x}=\N[x]\setminus\{0\}\) from \cite{BM} shows that
\(b\) is a polynomial, and
\(\lm(b)=x^j\) for one of the finitely many \(1\le j\le r\).
Its leading coefficient is a positive integer, while
\[
 \lm(\log A)=v/x^j\prec v.
\]
If this last scale is \(>1\), its leading coefficient is
\(\N\)-like by the induction hypothesis, applied to the family with
zero prefix above that scale.  If it is at most \(1\), it must be
\(1\), and \(A\) is a positive integer; the coefficient belongs to
\(\log\N_{\ge2}\).  Products with positive integer leading
coefficients and the finite union over \(j\) remain \(\N\)-like.
The aggregate coefficient of every finite sum of these small atoms
is also \(\N\)-like by the observation following \eqref{f-eq-2.4}.

Finally, \(\log P\) for \(P\in\N[x]\setminus\{0\}\) contributes
only its degree at the scale \(\log x\), and contributes zero at
the other scales \(>1\).  Finite sums over the slots, followed by
the finite union over cells, prove the induction step.
\end{proof}

\section{Shallow frame alphabets}
\begin{proposition}[Polynomial-width frame alphabet]\label{shallow}
For each fixed integer $r\ge1$, frames in a window $[Y,YR]$ with
$R\ge1$ and $\log R=O(x^r)$ have at most $\omega^{2r}$
Archimedean classes, uniformly in $Y,R$. In particular their
alphabet has type below $\omega^\omega$. The same conclusion
holds for their nonzero images modulo the additive subgroup $O(Y)$.
\end{proposition}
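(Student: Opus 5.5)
The plan is to pass to logarithms and reduce the Archimedean classification of the frames to the finitely many shallow coefficient slots of Lemma~\ref{f-lem:scales}. Let $D,D'$ be frames with values in $[Y,YR]$. Then $0\le\log D-\log Y\le\log R=O(x^r)$, so $\log D-\log D'=O(x^r)$. By \eqref{sf:eq:trunc-equivalence} with $s=x^r$, all frames in the window share one prefix $\tr_{\succ x^r}\log D$, independent of $D$. Moreover $D\arch D'$ iff $\log D-\log D'=O(1)$, iff $\tr_{\succ1}\log D=\tr_{\succ1}\log D'$. By Proposition~\ref{sf:prop:logsupport}, $\log D$ has support in $M_1$. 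So once the common prefix is fixed, the Archimedean class of $D$ is determined by the finite tuple of coefficients
\[
 \bigl(\coef_v(\log D)\bigr)_{v\in M_1\cap(1,x^r]},
\]
and \eqref{f-eq-2.3} says this tuple has exactly $2r$ entries. Ordering the slots from $x^r$ downwards, the map from classes to tuples is order preserving when tuples are compared lexicographically, because truncation is weakly increasing.

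The count then goes one slot at a time. Fix the coefficients at all scales above $v$; together with the common prefix this fixes $\tr_{\succ v}\log D$. Frames are actual members of $\Sk_{>1}$, or else the single trivial case $D=1$, which costs one point. Proposition~\ref{f-prop:coefficients} therefore makes the set of possible values of $\coef_v(\log D)$ an $\N$-like set, so of type at most $\omega$. This holds uniformly in the fixed data, and hence uniformly in $Y$ and $R$. The image is a lexicographic iterated sum: over each admissible value at the top slot we place the family determined by the lower slots. An induction on the number of remaining slots, using the ordered-sum bound $\alpha\beta$ for convex fibres, bounds the type by $\omega\cdot\omega\cdots\omega=\omega^{2r}$. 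Since every bound comes from Proposition~\ref{f-prop:coefficients}, the result does not depend on $Y$ or $R$.

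The final assertion, about nonzero images modulo $O(Y)$, is where I expect the main difficulty. A nonzero image forces $D\succ Y$, and $D\bmod O(Y)$ is the same thing as $\tr_{\succ Y}D$. By Corollary~\ref{sf:cor:gaps}, frames that are Archimedean-equivalent but have different leading coefficients are already separated modulo $O(Y)$ as soon as $D\succ Y$. I would therefore refine each Archimedean class by the value of $\st(D/D_0)$ for a fixed Skolem reference $D_0$ in that class. Theorem~\ref{sf:thm:discrete} with $c=1$ makes these values discrete and of type $\le\omega$. The hope is that, within a window of bounded multiplicative width, a fixed class and a fixed leading ratio leave only finitely many (or boundedly many, absorbed by the limit) truncations above $Y$. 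The first thing I would try is to run the same slot argument on $\log D$ at the additional precision $Y/D_0$. When $D_0\arch Y R$ this precision is at least $1/R$, and the shallow-scale analysis would have to be extended to the corresponding negative scales. If that extension fails, I would fall back on an ordered-sum estimate: $\omega^{2r}$ classes, each with fibre modulo $O(Y)$ of type at most $\omega$. Since $\omega^{2r}\cdot\omega$ is still below $\omega^\omega$, this keeps the alphabet statement intact even if the exponent $2r$ is not attained exactly.
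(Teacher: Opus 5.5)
Your argument for the main assertion is correct and is essentially the paper's proof. You use the common prefix above $x^r$, the $2r$ scales of Lemma~\ref{f-lem:scales}, Proposition~\ref{f-prop:coefficients} one slot at a time, and the lexicographic ordered-sum bound $\omega^{2r-1}\cdot\omega=\omega^{2r}$. The class of $1$ is absorbed; the paper does this by adjoining a zero coefficient, which keeps each coefficient set $\N$-like.

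The gap is in the last assertion, which you have misread. ``The same conclusion'' means that the nonzero images $q_Y(D)$ modulo $O(Y)$ have at most $\omega^{2r}$ \emph{Archimedean classes} in the quotient group. It is an alphabet statement, not a bound on the order type of the set of images.

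That statement needs one observation which your proposal never makes. A positive frame has nonzero image iff $D\succ Y$. For $D,D'\succ Y$ the projection preserves Archimedean equivalence and inequivalence: $q_Y(D)\le n\,q_Y(D')$ gives $D\le nD'+O(Y)\le(n+1)D'$, because $O(Y)=o(D')$, and the converse passes directly to the quotient. The alphabet of the nonzero images is therefore order-isomorphic to the set of classes of the subfamily $\{D\succ Y\}$, and it inherits the bound $\omega^{2r}$ from the first part.

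The quantity you set out to bound instead is not controlled by anything like $\omega^{2r}\cdot\omega$. Take $r=1$, $Y=1$, $R=2^x$; the window then contains all of $\N[x]\setminus\{0\}$. Inside the class of $x^k$ with a fixed leading coefficient, the images modulo $O(1)$ already have type $\omega^{k-1}$. So your hope of finitely many truncations above $Y$ fails for $k\ge2$. The fallback fibre bound $\omega$ fails too, since each class fibre has type $\omega^k$, and the set of all images has type $\omega^\omega$. Finally, even if the fallback had worked, it would give $\omega^{2r+1}$, not the stated $\omega^{2r}$.
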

\begin{proof}
Fix an actual frame $D_0$ in a nonempty window. Every frame there
has $\log(D/D_0)=O(x^r)$, so its logarithm has a fixed prefix
above $x^r$. Its class modulo $\arch$ is determined by the
coefficients at the $2r$ scales above $1$ in
Lemma~\ref{f-lem:scales}.
Proceed through these scales in decreasing order. After the
preceding coefficients are fixed, the entire prefix above the
next scale is fixed, because there are no intervening support
monomials. Proposition~\ref{f-prop:coefficients} gives a set
of at most $\omega$ choices for the next coefficient. The possible
vectors therefore have lexicographic order type at most
$\omega^{2r}$, by induction on the finite number of remaining
coordinates. The class of $1$, if present, causes no exception:
adjoining a zero coefficient preserves the coefficient property.
Comparison at the first differing coefficient is precisely the
order of the Archimedean classes, including negative coefficients.

Frames $D=O(Y)$ vanish in the additive quotient. For $D\succ Y$,
the subgroup $O(Y)$ is negligible compared with $D$, so that
quotient preserves Archimedean equivalence and inequivalence.
\end{proof}

\section{Parameters for the local proof}\label{local-setup}
All the following scales belong to $\T$, as stipulated in the
conventions and justified by Proposition~\ref{sf:prop:parameters}.
For the local argument fix
\begin{equation}
 H=2^h,\quad T=2^H,\quad B=\Sk_{<H},\quad
 \ell=\log H,\quad \theta=\log\log x,\quad
 \kappa=C\theta^2,\qquad C\ge4,\quad x^2\preceq h.
 \label{local-cap}
\end{equation}
There is no polynomial upper bound on $\log h$ here. Suppose
\begin{equation}
 |B|\le\EE(a_2),\quad |\Sk_{<H^\kappa}|\le\EE(a'),\quad
 |\Lambda(T)|\le\omega^\lambda,\qquad a_2\ge\omega^3,\quad\lambda\ge1.
 \label{local-sizes}
\end{equation}
Define
\begin{equation}
 a_{\rm leaf}=\max(a_2,a')+1,\qquad
 a_0=\max((a_2+2)\hs\lambda,a_{\rm leaf})+1.
 \label{local-a0}
\end{equation}
The cap and leaf bounds are size inputs for a single height
step; the final induction will supply them at the preceding height.
No local fibre estimate is assumed in this construction.

\section{Coarse families and a finite tree}

\begin{definition}[Coarse node]\label{l-def:coarse}
A coarse node is a pair $\calB=(U,q)$ with
$U\in\Sk_{<T}$, $q\ge x$, and $q=O(\ell)$.  Put
\begin{equation}
 \sigma_{\calB}=q\theta,\qquad t_{\calB}=q\theta/x,\qquad
 \Fr(\calB)=\{D<T:\ D\text{ is a frame},\
                                  \log(D/U)=O(q)\}.
 \label{l-eq-4.1}
\end{equation}
It is a leaf if $\log U\le\ell\theta^2$.
\end{definition}

All the logarithms in $\Fr(\calB)$ have the same truncation above
$\sigma_{\calB}$.  Since $\log P=O(\log x)\prec\sigma_{\calB}$,
this common truncation is a sum of nonzero atom truncations.
Lemma~\ref{l-lem:representations} supplies finitely many cells.

\begin{lemma}[Coarse label classes]\label{l-lem:labels}
In a coarse slot with prescribed nonzero truncation $\tau$, the
labels fall into finitely many equivalence classes for
\begin{equation}
 b\sim_\tau b'\quad\Longleftrightarrow\quad
               b/b'=1+O(\sigma_{\calB}/\tau).
 \label{l-eq-4.2}
\end{equation}
Choose an actual atom $b_s\log W_s$ in each class.
If $\tau\succ b_s\sigma_{\calB}$, every label in that class
equals $b_s$ exactly.
Otherwise every base in that class is $<H^\kappa$.
Every coarse-zero atom has its base $<H$.
\end{lemma}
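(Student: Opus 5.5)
We work inside one fixed cell for $\Fr(\calB)$ and one nonzero slot of it, so the slot truncation $\tau=\tr_{\succ\sigma_{\calB}}(b\log W)$ is prescribed for every atom $b\log W$ filling it. The argument has four steps: finiteness of the label classes, exactness of labels when $\tau$ is large, the base bound $<H^\kappa$ otherwise, and the base bound $<H$ for coarse-zero atoms. Throughout, labels satisfy $b\in B$, so $x\le b<H$.

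\textbf{Finitely many classes.} We apply the configuration argument of Lemma~\ref{f-lem:atom}, with the ambient cutoff $\sigma_{\calB}$ in place of $v$. The cutoff need not lie in $M_1$; the lemma uses only truncation in $\T$, so this is harmless. It yields finitely many configurations, each fixing $m_1=\lm(b)$, $s_1=\lm(\log W)$ with $m_1s_1=\lm(\tau)$, the two leading coefficients, and the prefix $B_+=\tr_{\succ\sigma_{\calB}/s_1}b$.

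Within one configuration, $b-B_+=O(\sigma_{\calB}/s_1)$. Since $b\arch m_1$, this gives
\[
 b/b'=1+O(\sigma_{\calB}/(s_1m_1)).
\]
We have $s_1m_1=\lm(\tau)\arch\tau$, so this is exactly the relation $\sim_\tau$ of \eqref{l-eq-4.2}. Hence each configuration lies in a single $\sim_\tau$-class, and there are finitely many classes.

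\textbf{Exactness when $\tau\succ b_s\sigma_{\calB}$.} In this case the class width satisfies
\[
 b_s\sigma_{\calB}/\tau\prec1.
\]
For $b\sim_\tau b_s$ we get $b-b_s=O(b_s\sigma_{\calB}/\tau)=o(1)$. Corollary~\ref{sf:cor:gaps} then forces $b=b_s$. One must check that $b_s\sigma_{\calB}/\tau$ is infinitesimal, not merely $\preceq1$. Strict $\succ$ between elements of $\T$ means an infinite ratio, so this holds.

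\textbf{The base bound in the other case.} If $\tau\preceq b_s\sigma_{\calB}$, then
\[
 b\log W\arch\tau\preceq b_s\sigma_{\calB}.
\]
All labels in the class satisfy $b\arch b_s$, so $\log W\preceq\sigma_{\calB}=q\theta=O(\ell\theta)$. We want $\log W<\kappa\ell=C\theta^2\ell$. The margin is a factor of $\theta$, which is infinite, so any fixed real constant is absorbed and we obtain $W<H^\kappa$.

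\textbf{Coarse-zero atoms.} For such an atom, $b\log W=O(\sigma_{\calB})$. Since $b\ge x$,
\[
 \log W=O(\sigma_{\calB}/x)=O(\ell\theta/x)\prec\ell,
\]
because $\theta\prec x$. Hence $W<2^{\log_2 H}=H$.

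The main obstacle is the first step. We must verify that the configuration finiteness of Lemma~\ref{f-lem:atom} survives an arbitrary cutoff $\sigma_{\calB}\in\T$. In particular, the well-ordering of the prefix families and the comparison in \eqref{f-eq-3.3} must go through when $u=\sigma_{\calB}/s_1$ is not a monomial. If it is not, I would replace it by its leading monomial, which changes truncations only by an Archimedean-equivalent cutoff.
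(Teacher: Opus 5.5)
Your proof is correct. The exactness step (you cite Corollary~\ref{sf:cor:gaps} where the paper cites Lemma~\ref{sf:lem:integers}, which is interchangeable), the bound $\log W=O(\ell\theta)<\kappa\ell$, and the coarse-zero bound $\log W=O(\ell\theta/x)=o(\ell)$ all match the paper. Only the finiteness step takes a different route.

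The paper argues directly. It checks that $\sim_\tau$ is an equivalence relation with convex classes. It then takes strictly increasing representatives $b_i$ of infinitely many distinct classes and shows, via \eqref{label-decrease}, that the actual logarithms $\log W_i$ strictly decrease, contradicting well ordering.

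You instead reuse the configuration finiteness of Lemma~\ref{f-lem:atom} with cutoff $v=\sigma_{\calB}$, and observe that a fixed prefix $B_+$ forces $b/b'=1+O(\sigma_{\calB}/\tau)$. This is legitimate. In Lemma~\ref{f-lem:atom}, the hypothesis $v\in M_1$ (and monomiality of $v$) enters only the final coefficient clause. The finiteness of configurations, including \eqref{f-eq-3.3}, uses only Proposition~\ref{sf:prop:trunc}, which holds for every positive cutoff in $\T$. So your fallback to $\lm(\sigma_{\calB})$ is unnecessary; truncations at $s$ and at $\lm(s)$ coincide in any case.

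Your route is more economical and gives a sharper picture of the classes. Since $b\sim_\tau b'$ forces $\lm(b)=\lm(b')$, the classes are exactly the fibres of $b\mapsto\tr_{\succ\sigma_{\calB}/s_1}b$, by \eqref{sf:eq:trunc-equivalence}. The price is re-auditing a lemma that is stated only for $v\in M_1$. The paper's route is self-contained.

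One small addition is needed: verify that $\sim_\tau$ is an equivalence relation. Symmetry uses $\sigma_{\calB}/\tau\prec1$, which holds because $\tau\neq0$ is supported above $\sigma_{\calB}$. Without this check, ``each configuration lies in a single class'' does not yet give finitely many classes.
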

\begin{proof}
Write $s=\sigma_{\calB}$ and $\delta=s/\tau\prec1$.
The relation $b/b'=1+O(\delta)$ is an equivalence relation:
inversion and multiplication preserve $1+O(\delta)$ because
$\delta$ is infinitesimal. Its classes are convex in the
well-ordered label set. Indeed, if $b_1\le b\le b_2$ and
$b_2/b_1=1+O(\delta)$, then
$0\le b/b_1-1\le b_2/b_1-1=O(\delta)$.

Suppose there were infinitely many classes. Choose strictly
increasing representatives $b_i$ from distinct classes and actual
atoms $b_iL_i$, $L_i=\log W_i$, in the prescribed slot. Write
$b_iL_i=\tau+e_i$ with $e_i=O(s)$. For $i<j$, put
$d_{ij}=1-b_i/b_j>0$. Inequivalence implies
$b_j/b_i-1\succ\delta$, and hence $d_{ij}\succ\delta$:
if $b_j/b_i-1$ is infinitesimal this follows by division by
$b_j/b_i\sim1$; otherwise $d_{ij}$ is bounded below by a
positive real number. Thus
\begin{equation}
 \begin{aligned}
 L_i-L_j
 &=\frac{\tau}{b_i}d_{ij}
       +\frac{e_i}{b_i}-\frac{e_j}{b_j}\\
 &=\frac{s}{b_i}\left(\frac{d_{ij}}{\delta}+O(1)\right)>0.
 \end{aligned}
 \label{label-decrease}
\end{equation}
The constants in the remainder may depend on $i,j$, which are
fixed in each comparison. Since $d_{ij}/\delta$ is infinite,
the sign follows. This is a decreasing sequence in the
well-ordered family $\{\log W:W\in\Sk,\ W>1\}$, a contradiction.

In a fixed label class, $b/b_s=1+O(\delta)$, so
\[
 b-b_s=O(b_s\delta)=O(b_ss/\tau).
\]
If $\tau\succ b_ss$, this is $o(1)$.
Lemma~\ref{sf:lem:integers} makes the difference an integer;
as it is infinitesimal, it is zero. Thus $b=b_s$ as germs.

In the other case $\tau=O(b_ss)$ and $b\sim b_s$; consequently
\[
 \log W=\frac{\tau+O(s)}b=O(s)=O(\ell\theta)
                              <\kappa\ell
\]
eventually. Thus $W<H^\kappa$.
For a coarse-zero atom, $b\log W=O(s)$ and $b\ge x$, whence
\[
 \log W=O(s/x)=O(\ell\theta/x)=o(\ell),
\]
so $W<H$. These are bounds on each member germ; the diverging
factor $\theta$ absorbs every fixed real constant.
\end{proof}

Call the first kind of class \emph{exact}.
For every exact class attach the child
\begin{equation}
 \calB_s=(W_s,q_s),\qquad q_s=q+\log b_s.
 \label{l-eq-4.3}
\end{equation}
Only finitely many children are attached, including repetitions
from different cells if necessary.  Every base in the corresponding
class satisfies
\begin{equation}
 \log(W/W_s)=O(\sigma_{\calB}/b_s)
                  =O(t_{\calB_s}).
 \label{l-eq-4.4}
\end{equation}
At a leaf no children are needed.

\begin{lemma}[Termination and parameters]\label{l-lem:tree}
This construction yields a finite tree.  Every node satisfies the
conditions of Definition~\ref{l-def:coarse}.
\end{lemma}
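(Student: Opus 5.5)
We must show the coarse-tree construction is a finite tree and that every node $(U,q)$ satisfies $U\in\Sk_{<T}$, $q\ge x$, $q=O(\ell)$. The plan is to verify the parameter conditions by induction along paths, and then get finiteness from König's lemma (the finite-branching tree principle recalled after (BM5)) together with a well-foundedness argument ruling out infinite branches.

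\textbf{Parameters.} Fix a node $\calB=(U,q)$ satisfying Definition~\ref{l-def:coarse}, and an exact class with representative atom $b_s\log W_s$ and child $(W_s,q_s)$, $q_s=q+\log b_s$.

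\emph{Bound on $W_s$.} Since $W_s$ is the base of an atom of a frame $D<T$, and every base of a frame below the cap is $<T$, we get $W_s\in\Sk_{<T}$.

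\emph{Lower bound on $q_s$.} Since $b_s\ge x$, we have $\log b_s>0$, so $q_s\ge q\ge x$.

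\emph{Upper bound on $q_s$.} Since $b_s\in B=\Sk_{<H}$, we have $\log b_s<\ell$. This gives $q_s=O(\ell)$, but the constant grows by $1$ at each step. That is harmless node-by-node, since each node lies at finite depth and $O$-constants may depend on the node. A cleaner uniform bound would be preferable, and I would try to obtain one from exactness. In an exact class $\tau\succ b_s\sigma_{\calB}=b_sq\theta$, while $\tau$ is a prefix of $\log D=O(\log T)=O(H)$. So $b_s\prec H/(q\theta)$, and hence $\log b_s\le\ell-\log q-\log\theta$. Summing along a path then telescopes to a bound $q\le O(\ell)$ that is uniform in depth. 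I would use this telescoping as the main verification.

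\textbf{Finite branching.} At a non-leaf node there are finitely many cells, by Lemma~\ref{l-lem:representations}. Each cell has finitely many slots. Each slot has finitely many label classes, by Lemma~\ref{l-lem:labels}. So each node has finitely many children.

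\textbf{No infinite branch.} Suppose $(U_k,q_k)$ is an infinite path. The key comparison is that a child's base is much smaller than its parent's:
\[
 \log W_s=\frac{\tau+O(\sigma)}{b_s}
 \le\frac{\log D+O(1)}{x}
 \prec \log U+O(q).
\]
Here $\log D=\log U+O(q)$ and $q=O(\ell)$. Along a non-leaf path we have $\log U_k>\ell\theta^2$, which dominates $q_k$. It follows that $\log U_{k+1}\preceq\log U_k/x$, so $U_{k+1}<U_k$ as Skolem functions. This gives a strictly decreasing sequence in the well-ordered set $\Sk$, which is a contradiction.

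Combining finite branching with the absence of infinite branches, the finite-branching tree principle shows the tree is finite.

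\textbf{Main obstacle.} The delicate step is the descent $U_{k+1}<U_k$. It needs $q_k\prec\log U_k$ along non-leaf nodes, and this requires the depth-uniform $O(\ell)$ bound from the telescoping estimate. If that estimate fails, I would instead compare the scales $q_k$, which increase, against the fixed bound $O(\ell)$. Each step multiplies the base-logarithm size by at most $1/x$, so after finitely many steps $\log U$ would drop below $\ell\theta^2$, forcing a leaf.
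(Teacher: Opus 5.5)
Your core argument is the paper's. The parameters are checked node-wise, with $q_j\le q_0+j\ell$ and a depth-dependent constant. The descent is $\log W_s\le\log D/b_s\le\log D/x\prec\log U$ at a nonleaf, followed by well-ordering and K\"onig's lemma. Using $U_{k+1}<U_k$ in $\Sk$ instead of the well-ordered classes $[\log W]$ is an equivalent variant. However, the step you call the ``main verification'' is wrong, and the ``main obstacle'' you describe does not exist.

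\emph{The telescoping fails.} The bound $\log b_s\le\ell-\log q-\log\theta$ does not telescope: nothing cancels between consecutive steps. The subtracted terms are $O(\log\ell)$, negligible against $\ell$, so summing along a path still gives a bound of order $k\ell$, not a depth-uniform one.

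\emph{No uniform bound is needed.} At a fixed nonleaf, $q\le C\ell$ for some real $C$ depending on that node. Since $\log U>\ell\theta^2$ and $\theta$ is infinite, $q=o(\log U)$ whatever $C$ is. That is exactly why the paper says no uniform constant in $j$ is required. Your own remark that the growing constant is ``harmless node-by-node'' already finishes the job, and your descent paragraph is valid on that basis alone. If you want a uniform bound anyway, the estimate that genuinely telescopes is $\log b_s\le\log\log D-\log\log W_s=\log\log U-\log\log W_s+o(1)$, which gives $q_k\le q_0+\ell+O(1)$.

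\emph{The fallback is invalid.} In this non-Archimedean setting, $\log U$ can be as large as roughly $H\log 2=e^{\ell}\log2$. Since $H/x^k\succ\ell\theta^2$ for every finite $k$, dividing by $x$ at each step never forces a leaf by counting steps. Termination has to come from the well-ordering, as in your main argument.

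Delete the telescoping and the fallback, and the proof is correct.
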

\begin{proof}
Along a branch of length $j$,
\begin{equation}
 q_j=q_0+\sum_{i=1}^j\log b_i\le q_0+j\ell.
 \label{l-eq-4.5}
\end{equation}
For each finite $j$ this is $O(\ell)$ and is at least $x$.
No uniform real constant in $j$ is required by \eqref{l-eq-4.1}.

At a nonleaf, $q=o(\log U)$, since
$\log U>\ell\theta^2$ and $q=O(\ell)$.
An atom used to choose a child occurs in a frame $D\in\Fr(\calB)$,
so
\begin{equation}
 \log W_s\le\frac{\log D}{b_s}
       \le\frac{\log D}{x}\prec\log U .
 \label{l-eq-4.6}
\end{equation}
The positive classes $[\log W]$, $1<W\in\Sk$, are well ordered
as a monotone image of $\Sk$.  Thus there is no infinite branch.
Finite branching and K\"onig's lemma imply that the tree is finite.
\end{proof}

\begin{remark}
The essential invariant is \eqref{l-eq-4.1}, not a literal bound
$|\log(D/U)|\le q$.  The factor $\theta$ absorbs every real
constant when taking a common truncation.
Equation \eqref{l-eq-4.5} replaces any estimate on a product of labels.
\end{remark}

\section{The complete small-atom payload and frame coding}

Define the actual ordered set
\begin{equation}
 \Bsmall=\{0\}\cup
 \Sigma^+\{b\log W:\ b,W\in B,\ b\ge x,\ W>1\}.
 \label{l-eq-5.1}
\end{equation}
It is not a quotient or a truncation.

\begin{lemma}[Nonrecursive bounds]\label{l-lem:payload}
The order type of $\Bsmall$ is $<\EE(a_0)$.
The full atom family with label in $B$ and base in
$\Sk_{<H^\kappa}$ has type at most $\EE(a_{\rm leaf})<\EE(a_0)$.
The polynomial factors also have type $<\EE(a_0)$.
At a coarse leaf, all frames under consideration and all their
finite positive sums belong to $\Sk_{<H^\kappa}$.
\end{lemma}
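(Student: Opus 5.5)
The four assertions of Lemma~\ref{l-lem:payload} are handled separately; each reduces to (BM2)--(BM4) applied to the size inputs \eqref{local-sizes}.

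\emph{The payload $\Bsmall$.} The atom family $\{b\log W: b,W\in B,\ b\ge x,\ W>1\}$ is a weakly increasing image of $B\times B$. By (BM2) its order type is at most
\[
 \EE(a_2)\od\EE(a_2)=\omega^{\omega^{a_2}\cdot2}.
\]
Its Archimedean classes form a subset of the alphabet $\Lam(T)$, since $W<H<T$ and $b\in B$. Hence there are at most $\omega^\lambda$ of them. We apply (BM3) with $\alpha=\omega^{\omega^{a_2}\cdot2}$ and $\beta=\omega^\lambda$. By (BM4) the natural power at the limit exponent $\omega^\lambda$ equals the ordinary power, so
\[
 \bigl(\alpha^\omega\bigr)^{\omega^\lambda}
 =\omega^{\omega^{a_2}\cdot2\cdot\omega\cdot\omega^\lambda}
 =\omega^{\omega^{a_2+1+\lambda}}.
\]
We have $a_2+1+\lambda\le(a_2+2)\hs\lambda<a_0$. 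Adjoining $0$ adds one point, which the infinite majorant absorbs. Therefore $|\Bsmall|<\EE(a_0)$. The only care needed is at this exponent comparison: ordinary sums of ordinals are bounded by natural sums, and $\omega^{a_2}$ is infinite, so the factor $2$ and the $+1$ are harmless.

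\emph{The full atom family and the polynomial factors.} The atoms with label in $B$ and base in $\Sk_{<H^\kappa}$ form an increasing image of $B\times\Sk_{<H^\kappa}$. By (BM2) their type is at most
\[
 \EE(a_2)\od\EE(a')\le\omega^{\omega^{\max(a_2,a')}\cdot2}\le\EE(a_{\rm leaf}).
\]
This is $<\EE(a_0)$ because $a_{\rm leaf}<a_0$. The polynomial factors form $\N[x]\setminus\{0\}$, which by (BM5) has type $\omega^\omega$. Since $a_0>\omega^3>1$, we get $\omega^\omega<\EE(a_0)$.

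\emph{The leaf claim.} Let $\calB=(U,q)$ be a leaf, so $\log U\le\ell\theta^2$, and let $D\in\Fr(\calB)$. Then $\log D=\log U+O(q)$. Since $q=O(\ell)$ and $\ell\theta^2\succ\ell$, this gives $\log D\le 2\ell\theta^2<\kappa\ell$ eventually, using $C\ge4$. Hence $D<H^\kappa$.

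For finite positive sums of frames, each has $k$ summands, each $\le H^{2\theta^2}$. So the sum is at most $kH^{2\theta^2}<H^{C\theta^2}$, because $H$ is infinite and $\theta^2\to\infty$ absorbs the real factor $k$. This is why the slack $C\ge4$ rather than $C\ge2$ is built into $\kappa$. The bound is per member, as in Lemma~\ref{l-lem:labels}, and no uniformity in $k$ is needed.
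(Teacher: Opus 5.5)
Your proof is correct and follows the paper's argument: (BM2) bounds the two atom families, (BM3)--(BM4) with the alphabet bound $\omega^\lambda$ bound $\Bsmall$, and at a leaf the estimate $\log D\le\ell\theta^2+O(\ell)<2\ell\theta^2$ is used with the real factor $k$ absorbed by $\theta^2$. The only difference is cosmetic: you keep $\EE(a_2)\od\EE(a_2)=\omega^{\omega^{a_2}\cdot2}$ and obtain $\EE(a_2+1+\lambda)$, whereas the paper first rounds up to $\EE(a_2+1)$ and obtains $\EE((a_2+2)+\lambda)$; both are below $\EE(a_0)$, and your aside about $C\ge4$ is not needed, since any $C>2$ suffices in your chain.
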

\begin{proof}
The atom set in \eqref{l-eq-5.1} has order at most
$\EE(a_2)\od\EE(a_2)<\EE(a_2+1)$ and has at most
$\omega^\lambda$ Archimedean classes.  Equations \eqref{ext-sums}--\eqref{ext-ordinals} give
\begin{equation}
 |\Bsmall|
 \le\bigl(\EE(a_2+1)^\omega\bigr)^{\od\omega^\lambda}
 =\EE((a_2+2)+\lambda)
 \le\EE((a_2+2)\hs\lambda)<\EE(a_0).
 \label{l-eq-5.2}
\end{equation}
Adjoining zero is absorbed by this infinite limit bound.
The second assertion follows from the natural product of the two
size inputs.  Polynomial order type is $\omega^\omega$.

At a leaf, a frame in \eqref{l-eq-4.1} satisfies
\begin{equation}
 \log D\le\ell\theta^2+O(\ell)<2\ell\theta^2.
 \label{l-eq-5.3}
\end{equation}
Any particular finite sum of such frames is consequently
$<H^{4\theta^2}\le H^\kappa$.
The statement is uniform as a containment of germs:
each finite real coefficient is eventually absorbed by $\theta^2$.
\end{proof}

\begin{lemma}[Coding a fine frame family]\label{l-lem:coding}
Fix a coarse nonleaf $\calB$ and a family
$\calA\subseteq\Fr(\calB)$ whose logarithms have a common truncation
above $\sigma$, where $0<\sigma\preceq\sigma_{\calB}$.
Fix $\eta>0$. To count $\log\calA/O(\eta)$, it suffices to count finitely many
products of the following ordered coordinates:
\begin{enumerate}[label=\textup{(\roman*)},leftmargin=3em]
\item a polynomial factor, an element of $\Bsmall$, and finitely
many atoms with label in $B$ and base below $H^\kappa$;
\item for each remaining slot, a base band
$\log(W/W_f)=O(\sigma/b_s)$, counted modulo
$\equiv_{\eta/b_s}$, at one of the children $\calB_s$.
\end{enumerate}
Here $W_f$ is an actual base and satisfies
$\log(W_f/W_s)=O(t_{\calB_s})$.
If every primitive recursive coordinate used in \textup{(ii)}
has order at most $\EE(d)$, with $d\ge a_0$, finite products and
finite unions in this coding have order at most $\EE(d+2)$.
Each coordinate in \textup{(ii)} may itself be bounded by a product of
two such primitive bounds without changing this conclusion.
\end{lemma}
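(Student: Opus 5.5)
The plan is to decompose each frame logarithm according to the finite cell structure of the coarse node $\calB$, and then sort the slots of a fixed cell by the trichotomy of Lemma~\ref{l-lem:labels}.

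\textbf{Step 1: reduce to a fixed cell.} For $D\in\calA$ write
\[
 \log D=\log P+\sum_j b_j\log W_j .
\]
The common truncation above $\sigma_{\calB}$ is a fixed finite sum of nonzero atom truncations, so by Lemma~\ref{l-lem:representations} there are only finitely many cells. Fixing a cell costs only a finite union. Within the cell, every atom is one of three kinds:
\begin{enumerate}[label=\textup{(\alph*)},leftmargin=3em]
\item a coarse-zero atom, whose base is $<H$ by Lemma~\ref{l-lem:labels};
\item an atom in a non-exact label class, whose base is $<H^\kappa$;
\item an atom in an exact class, whose label equals $b_s$ exactly.
\end{enumerate}
The coarse-zero atoms with base in $B$ are summed into a single element of $\Bsmall$. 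Note that the labels already lie in $B$ because the frame is below the cap. The finitely many atoms of kind (b) form coordinate (i), bounded by the full atom family of Lemma~\ref{l-lem:payload}. The polynomial factor $\log P$ is the remaining part of (i).

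\textbf{Step 2: the exact slots.} For an exact slot, the atom is $b_s\log W$ with $b_s$ fixed, and $\log(W/W_s)=O(t_{\calB_s})$ by \eqref{l-eq-4.4}. The common truncation of $\log\calA$ above $\sigma$ fixes the sum of these contributions only up to $O(\sigma)$. This is the point that needs care. I would first pass to finitely many sub-cells in which each exact atom has its own fixed truncation above $\sigma$, again by Lemma~\ref{l-lem:representations} applied to the truncated atom family. Then I choose an actual base $W_f$ realizing one such truncation. Every base in the sub-cell then satisfies $b_s\log(W/W_f)=O(\sigma)$, that is, $\log(W/W_f)=O(\sigma/b_s)$. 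Modulo $O(\eta)$, the atom $b_s\log W$ is determined by the $\equiv_{\eta/b_s}$-class of $W$. Moreover $\log(W_f/W_s)=O(t_{\calB_s})$ follows from \eqref{l-eq-4.4} because $W_f$ lies in the class. This gives coordinate (ii).

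\textbf{Step 3: assemble the map.} The map from the tuple of coordinates to $\log D \bmod O(\eta)$ is the sum of the coordinates. It is weakly increasing in each coordinate, since each summand enters positively and the addition respects $O(\eta)$. Hence (BM2) bounds $|\log\calA/O(\eta)|$ by the natural product of the coordinate types, followed by a finite natural sum over cells and sub-cells.

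\textbf{Step 4: the ordinal arithmetic.} Each coordinate is at most $\EE(d)$ or at most $\EE(d)\od\EE(d)$. This holds both for the primitive recursive ones and for those in (i), using Lemma~\ref{l-lem:payload} and $d\ge a_0$. Finitely many of them have natural product at most $\EE(d)^{\od k}<\EE(d+1)$, since $\omega^{\omega^d}\od\cdots=\omega^{\omega^d\cdot k}$. A finite natural sum of such bounds stays below $\EE(d+1)\cdot\omega\le\EE(d+2)$ by (BM4).

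\textbf{Main obstacle.} The step I expect to be hardest is Step 2. There the one global common truncation has to be split into per-slot fixed truncations, and one has to show that the remainders combine to $O(\eta)$ independently of the other slots. I would handle this by using the finite-representation lemma at the level $\sigma$ to fix each slot's prefix, so that no cancellation between slots needs to be controlled.
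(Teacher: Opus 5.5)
Your proposal is correct and follows the paper's proof essentially step by step. The shared steps are: coarse cells with their finitely many label classes; a finite refinement at level $\sigma$ via Lemma~\ref{l-lem:representations}; the coarse-zero atoms lumped into one element of $\Bsmall$; nonexact atoms recorded whole; exact bases coded as bands about one actual $W_f$ from the same class; and a coordinatewise increasing addition map into the quotient modulo $O(\eta)$, bounded by (BM2).

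Two points need tightening, both of which the paper handles with its matching injections $\iota$ and polynomial tag. First, the fine representation must also admit the nonzero truncations $\tr_{\succ\sigma}\log P$ as summands, since these need not vanish (e.g.\ for $\sigma=1$ in the $\KK$ operation). Second, the label class of each coarse occurrence must be part of the finite index.
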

\begin{proof}
Write $c=\sigma_{\calB}$ and
$\pi_u=\tr_{\succ u}$.  Since $\sigma\preceq c$,
$\pi_c\pi_\sigma=\pi_c$.  Equal $\pi_u$-truncations differ by
$O(u)$.  Apply Lemma~\ref{l-lem:representations} to the common
fine truncation, using both atom truncations and the nonzero
truncations of $\log P$ as summands.  This gives finitely many
fine cells.  The coarse cells are those already used in constructing
$\calB$; the polynomial logarithm has zero coarse truncation.

Here is an explicit common refinement of these two finite lists.
Number the occurrences in a coarse cell as
$(\tau_1,\ldots,\tau_r)$ and those in a fine cell as
$(\nu_1,\ldots,\nu_m)$, even when some entries coincide.
Retain every injection
\[
 \iota:\{1,\ldots,r\}\longrightarrow\{1,\ldots,m\}
\]
such that
\[
 \pi_c(\nu_{\iota(i)})=\tau_i\quad(1\le i\le r),
 \qquad
 \pi_c(\nu_j)=0\quad(j\notin\operatorname{im}\iota).
\]
For each coarse occurrence also specify one of its finitely many
label classes from Lemma~\ref{l-lem:labels}.  Also record a
polynomial tag: either one unmatched fine occurrence representing
the nonzero fine truncation of $\log P$, or the symbol $0$ when
that truncation is zero.  There are finitely
many choices of all this data.  Discard choices realized by no
frame decomposition in $\calA$, and denote the remaining finite
index set by $\Delta$.

Every decomposition of every $D\in\calA$ realizes at least one
index in $\Delta$: match the coarse and fine truncations of each
actual coarse-nonzero atom occurrence, and then choose the recorded
label classes.  Repeated occurrences are matched individually.
No uniqueness of a decomposition or of a matching is asserted or
needed; all compatible choices were retained.

Fix $\delta\in\Delta$.  Let $I_\delta$ and $J_\delta$ be its
exact and nonexact coarse occurrences, respectively.  In every
decomposition realizing $\delta$, an occurrence $i\in I_\delta$
has the fixed label $b_{s(i)}$ and the prescribed fine truncation
$\nu_{\iota(i)}$.  Choose one actual decomposition realizing
$\delta$ and let $W_{f,i}$ be its base at that occurrence.
For every other base $W$ occurring there,
\[
 b_{s(i)}\log(W/W_{f,i})=O(\sigma),
 \qquad
 \log(W/W_{f,i})=O(\sigma/b_{s(i)}).
\]
The reference base belongs to the chosen coarse label class, so
\eqref{l-eq-4.4} gives
\[
 \log(W_{f,i}/W_{s(i)})=O(t_{\calB_{s(i)}}).
\]
Use the whole band
\[
 X_{\delta,i}=
 \{W\in\Sk_{<T}:\log(W/W_{f,i})=O(\sigma/b_{s(i)})\}
\]
as the coordinate before quotienting.  Its radius is
$O(t_{\calB_{s(i)}})$, because
$\sigma/b_{s(i)}=O(q\theta/x)$ and $q_{s(i)}\ge q$.
Thus the band has the required child and reference parameters.

Every coarse-zero atom has both its label and its base in $B$ by
Lemma~\ref{l-lem:labels}.  Record the sum of all such atoms,
including any that are nonzero at the fine cutoff, as one actual
element of $\Bsmall$; use zero if there are none.  Record $P$
separately.  At every $j\in J_\delta$, the base is below
$H^\kappa$, so record the entire actual atom as an element of
\[
 \mathcal A_{\rm sm}=
 \{b\log W:b\in B,\ b\ge x,\ W\in\Sk_{<H^\kappa},\ W>1\}.
\]
There are only $|J_\delta|$ such coordinates.  In particular, an
unbounded number of coarse-zero occurrences creates no additional
coordinates.

We now construct the ordered map which performs the counting.
Let $\mathcal H=K+K\log x$, regarded as an ordered additive group,
and let $J_\eta=\{h\in\mathcal H:h=O(\eta)\}$, a convex
subgroup.  All logarithmic coordinates above belong to
$\mathcal H$.  Put $\mathcal P=\N[x]\setminus\{0\}$ and form
the full Cartesian product of well orders
\[
 \mathcal D_\delta=
 \mathcal P\times\Bsmall\times
 \prod_{j\in J_\delta}\mathcal A_{\rm sm}\times
 \prod_{i\in I_\delta}
       (X_{\delta,i}/{\equiv_{\eta/b_{s(i)}}}).
\]
Empty products have their usual singleton interpretation.
Define a map to the ordered quotient group by
\[
 \Phi_\delta\bigl(P,A,(a_j)_j,([W_i])_i\bigr)
 =\left[\log P+A+\sum_{j\in J_\delta}a_j
                   +\sum_{i\in I_\delta}b_{s(i)}\log W_i\right]_{J_\eta}.
\]
This is well defined: changing a representative $W_i$ changes its
weighted logarithm by $O(\eta)$, and there are only finitely many
such terms.  It is weakly increasing in each coordinate.
For a quotient coordinate this follows directly from the convex
quotient orders and $b_{s(i)}>0$: the map
$[W]\mapsto[b_{s(i)}\log W]_{J_\eta}$ is increasing.
For the other coordinates it follows from monotonicity of
$P\mapsto\log P$, addition, and the quotient projection.
This argument requires no choice of representatives compatible
with the product order.

For any actual decomposition realizing $\delta$, inserting its
polynomial, its complete coarse-zero sum, its nonexact atoms, and
its exact-base classes in $\mathcal D_\delta$ gives precisely
$[\log D]_{J_\eta}$.  Consequently
\[
 \log\calA/O(\eta)
 \ \subseteq\ \bigcup_{\delta\in\Delta}
                         \Phi_\delta(\mathcal D_\delta)
\]
as ordered subsets of $\mathcal H/J_\eta$.
The full products may produce additional values; this containment
is exactly what an upper bound requires.  By (BM2), each image
has the natural-product bound of its coordinate order types, and
their finite union has the natural-sum bound.  This proves the
claimed reduction without assigning a unique code to a frame.

Finally, Lemma~\ref{l-lem:payload} bounds each coordinate of
type \textup{(i)} by $\EE(a_0)\le\EE(d)$.  If a coordinate
of type \textup{(ii)} has a bound which is a natural product of
two primitive bounds, expand that bound into two factors when
forming the natural product.  There are still only finitely many
factors in each of finitely many products.  For every fixed
positive integer $N$,
\[
 \EE(d)^{\od N}=\omega^{\omega^d N}<\EE(d+1).
\]
A finite natural sum of these bounds is still below
$\EE(d+1)$, and in particular is at most $\EE(d+2)$, as claimed.
\end{proof}

\begin{lemma}[Actual quotient maps and assembly]\label{l-lem:quotient-maps}
Write $q_Y$ for the projection of the ambient ordered additive
group modulo $O(Y)$.  The following rules will be used on actual
well-ordered families of positive values.
\begin{enumerate}[label=\textup{(\roman*)},leftmargin=3em]
\item If a convex equivalence relation $\sim_f$ refines another
convex relation $\sim_c$ on a well-ordered family $A$, then
\[
 A/{\sim_f}\longrightarrow A/{\sim_c},\qquad
 [a]_f\longmapsto[a]_c
\]
is a weakly increasing surjection.  If the coarse quotient has
type at most $\beta$ and each fine fibre has type at most $\alpha$,
then
\[
 |A/{\sim_f}|\le\alpha\beta\le\alpha\od\beta.
\]
The same bounds hold after restricting $A$ to any subfamily.

\item Suppose $0<\eta\preceq1$ and every $D\in\mathcal D$
satisfies $D=O(Z)$.  There is a weakly increasing surjection
\begin{equation}
 (\log\mathcal D)/O(\eta)\longrightarrow q_{\eta Z}(\mathcal D),
 \qquad [\log D]_{O(\eta)}\longmapsto q_{\eta Z}(D).
 \label{l-eq:log-to-additive}
\end{equation}
In particular, a bound for the actual logarithmic quotient
bounds the indicated additive image.

\item Suppose $\varepsilon\prec1$ and
$Y=O(\varepsilon W)$ for every $W\in\mathcal M$.  There is a
weakly increasing surjection
\begin{equation}
 q_Y(\mathcal M)\longrightarrow\mathcal M/{\equiv_\varepsilon},
 \qquad q_Y(W)\longmapsto[W]_{\equiv_\varepsilon}.
 \label{l-eq:additive-to-log}
\end{equation}
When $Y=\varepsilon V$ and every $W\in\mathcal M$ is
Archimedean-equivalent to $V$, the two relations on $\mathcal M$
coincide.

\item For a positive family $\mathcal D$, let
$P_Y=q_Y(\mathcal D)\setminus\{0\}$.  Then $P_Y$ is positive,
\begin{equation}
 |P_Y/\arch|\le|\mathcal D/\arch|,\qquad
 q_Y(\Sigma^0\mathcal D)=\Sigma^0P_Y.
 \label{l-eq:positive-quotient-alphabet}
\end{equation}
For any fixed finite list of well-ordered subsets $X_i$ of the
additive quotient, the addition map
\begin{equation}
 \prod_{i=1}^kX_i\longrightarrow X_1+\cdots+X_k,
 \qquad (u_1,\ldots,u_k)\longmapsto\sum_{i=1}^ku_i
 \label{l-eq:finite-quotient-sum}
\end{equation}
is increasing in every coordinate, and its image has type at
most $\bigodot_{i=1}^k|X_i|$.  A finite union of such images is
bounded by the natural sum of their bounds.
\end{enumerate}
\end{lemma}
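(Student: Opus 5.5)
The four parts are independent, so I will treat them separately. Each rests on three things: convexity of the relations involved, the quotient order on an ordered abelian group modulo a convex subgroup, and (BM2). In every part the main task is to check that the stated map is well defined and weakly increasing. Once that is done, the order-type bounds follow from the ordered-sum bound recalled in the conventions or from (BM2).

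For (i), the map $[a]_f\mapsto[a]_c$ is well defined because $\sim_f$ refines $\sim_c$. It is surjective, and it is weakly increasing because both quotient orders are induced by the order of $A$ on convex classes. Its fibres are convex sets of $\sim_f$-classes lying inside one $\sim_c$-class. So $A/{\sim_f}$ is the ordered sum, indexed by $A/{\sim_c}$, of those fibres. The ordered-sum bound gives $\alpha\beta$, and $\alpha\beta\le\alpha\od\beta$ is standard. Restricting to a subfamily only shrinks the fibres and the index set, so the same bounds hold.

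For (ii), well definedness is the main point. If $\log D-\log D'=O(\eta)$ with $\eta\preceq1$, then $D/D'=1+O(\eta)$, since $e^t-1=O(t)$ for $t=O(1)$ by transfer. Hence $D-D'=O(\eta D')=O(\eta Z)$ and $q_{\eta Z}(D)=q_{\eta Z}(D')$. The map is weakly increasing because $D\mapsto\log D$ is increasing and $q_{\eta Z}$ is monotone. Surjectivity holds by construction.

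For (iii), suppose $q_Y(W)=q_Y(W')$. Then $W-W'=O(Y)=O(\varepsilon W)$, so $W'/W=1+O(\varepsilon)$. Because $\varepsilon\prec1$, this gives $\log(W'/W)=O(\varepsilon)$, i.e. $W\equiv_\varepsilon W'$. Monotonicity follows from convexity of the $\equiv_\varepsilon$-classes. For the coincidence claim, take $Y=\varepsilon V$ with every $W\arch V$ and run the argument backwards. From $\log(W/W')=O(\varepsilon)$ we get $W-W'=O(\varepsilon W')=O(\varepsilon V)=O(Y)$.

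For (iv), positivity of $P_Y$ holds because the images of positive elements are $\ge0$ and zero has been removed. For the class bound, the map $D\mapsto[q_Y(D)]$ on those $D$ with nonzero image is weakly increasing. If $D\arch D'$, their images are Archimedean-equivalent in the quotient: $D\le CD'$ is preserved by the monotone homomorphism $q_Y$. So the map factors through $\mathcal D/\arch$, and the bound follows by monotone image. The identity $q_Y(\Sigma^0\mathcal D)=\Sigma^0P_Y$ holds because $q_Y$ is additive and summands with zero image can be dropped. The addition map in \eqref{l-eq:finite-quotient-sum} is increasing in each coordinate since the quotient is an ordered group. Its image therefore has type at most the natural product by (BM2), and (BM2) also gives the natural-sum bound for finite unions.

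I expect no real obstacle. The one delicate point is the transfer estimates in (ii)--(iii), passing between $\log(D/D')=O(\eta)$ and $D/D'-1=O(\eta)$, and these follow from elementarity (LE1) as in Lemma~\ref{sf:lem:scaled}.
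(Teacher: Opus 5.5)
Your proposal is correct and follows the paper's proof essentially step for step: the ordered-sum decomposition over convex fibres in (i), the transfer estimates between $\log(D/D')=O(\eta)$ and $D-D'=O(\eta D')$ in (ii)--(iii), and additivity of $q_Y$ together with (BM2) in (iv). The one small difference is the alphabet bound in (iv): you carry $D\arch D'$ forward to the images and use a weakly increasing surjection from the classes of $\{D:q_Y(D)\ne0\}$, whereas the paper proves both directions for $D\succ Y$ and so identifies the two alphabets exactly; your one direction is enough for the stated inequality.
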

\begin{proof}
For (i), the fine quotient is the ordered sum of its convex
fibres over the coarse quotient.  The displayed bounds follow
from the ordered-sum estimate and (BM2).  Convex quotient maps
and inclusions of ordered subfamilies preserve the relevant
order bounds.

For (ii), if $\log(D/D')=O(\eta)$, then
$D/D'-1=O(\eta)$ because $\eta=O(1)$.  Therefore
$D-D'=O(\eta D')=O(\eta Z)$.  This proves well-definedness;
monotonicity follows from the monotonicity of logarithm,
exponential and the convex quotient projections.  For (iii),
$W-W'=O(Y)$ implies $(W-W')/W=O(\varepsilon)$ and hence
$\log(W/W')=O(\varepsilon)$.  If $W,W'\arch V$, the converse
follows from $W-W'=O(\varepsilon W')=O(\varepsilon V)$.

For (iv), a positive value has nonzero image precisely when it
strictly dominates $Y$.  For $D,D'\succ Y$, Archimedean
equivalence of $q_Y(D)$ and $q_Y(D')$ is equivalent to that of
$D$ and $D'$.  Indeed an inequality
$q_Y(D)\le n q_Y(D')$ gives
$D\le nD'+O(Y)=O(D')$, and conversely an inequality in the
original group passes to the quotient.  The ordered alphabet
of the positive images is thus the alphabet of the subfamily
$\{D\in\mathcal D:D\succ Y\}$.  The equality for finite sums
follows from additivity of $q_Y$.  Finally, addition is increasing
in each coordinate, so (BM2) proves the finite-product and
finite-union assertions.
\end{proof}

\begin{remark}[Restriction before exponentiation]\label{l-rem:restricted-exp}
The containing image in Lemma~\ref{l-lem:coding} need not consist
of logarithms of frames in the original family.  Thus, when a
bound such as $D=O(Z)$ is needed, first restrict that containing
image to the actual quotient $(\log\mathcal D)/O(\eta)$.
Choose in each of its classes the least representative belonging
to $\log\mathcal D$, exponentiate that representative, and then
apply $q_{\eta Z}$.  This gives exactly the map
\eqref{l-eq:log-to-additive}; every representative satisfies the
required value bound.  Values produced by arbitrary combinations
of coding coordinates are used only to bound the containing
ordered image.  Likewise, after adding quotient coordinates,
restrict to the actual member quotient before applying
\eqref{l-eq:additive-to-log}.
\end{remark}

\section[Local families at a prescribed width]{Local families at a prescribed window width}\label{l-ordinary-defs}

Fix a positive width scale $L\succeq x$. For this section suppose
one ordinal $\omega^s$, $s>0$, bounds all frame alphabets in windows
with $\log R=O(L)$, uniformly in their centers and ratios. We will
prove these alphabet bounds successively in Section~\ref{window-tiers}.
We now specify exactly which local nodes the coarse induction controls.
All frames, all members of the centered families, and all members
of the Archimedean classes are restricted to values $<T$.
The auxiliary finite sums defining $\SSS$ are unrestricted;
at leaves they lie below $H^\kappa$ by Lemma~\ref{l-lem:payload}.
Fix $\calB=(U,q)$.
Write
\[
 \calL(V;w)=\{W\in\Sk_{<T}:|\log(W/V)|=O(w)\}.
\]
The counts below, together with their parameter restrictions, define
the admissible nodes at $\calB$.

\begin{enumerate}[label=\textup{(\Alph*)},leftmargin=3em]
\item $\NN(V;\varepsilon,R)=Q_\varepsilon(\calL(V;\varepsilon R))$,
where
\begin{equation}
 \varepsilon\prec1,\quad R\ge1,\quad\varepsilon R\prec1,\quad
 \log R=O(L),\quad \log(1/\varepsilon)=O(q),\quad
 |\log(V/U)|=O(t_{\calB}).
 \label{l-eq-6.1}
\end{equation}
\item $\SSS(Y;R)$ is the order type, modulo additive $O(Y)$,
of all finite nonnegative sums of frames in $[Y,YR]$, where
\begin{equation}
 R\ge1,\quad \log R=O(L),\quad
 |\log(Y/U)|=O(q),\quad \log R=O(q).
 \label{l-eq-6.2}
\end{equation}
\item $\KK(E;t)=Q_{e^{-t}}(E)$, where $E$ is an Archimedean class,
has a representative $V$ with $|\log(V/U)|=O(t_{\calB})$, and
\begin{equation}
 \log x\le t,\qquad t=O(L),\qquad t=O(q).
 \label{l-eq-6.3}
\end{equation}
\item $\FF(V;w)=Q_1(\calL(V;w))$, where
\begin{equation}
 w>0,\quad w=O(t_{\calB}),\quad \log(1+w)=O(L),\quad
 |\log(V/U)|=O(t_{\calB}).
 \label{l-eq-6.4}
\end{equation}
\item $\GG(V;w,p)=Q_{e^{-p}}(\calL(V;w))$, with the radius and center conditions of \eqref{l-eq-6.4}, but without
its $\log(1+w)$ restriction, and
\begin{equation}
 p\ge\log x,\qquad p=O(q).
 \label{l-eq-6.5}
\end{equation}
\end{enumerate}
The reference $V$ in a centered node belongs to $\Sk_{<T}$.
The scale $Y$ belongs to $\T_{>0}$ and need not be a Skolem function.
All subsequent changes of scale remain in $\T$ by
Proposition~\ref{sf:prop:parameters}.
If an intersection used during the proof is a subfamily of one of
these nodes, the full node is used as an upper bound.

At a coarse leaf every node in this list is bounded by $\EE(a_0)$.
For (B), use \eqref{l-eq-5.3} and the full finite-sum containment.
For the other nodes a member satisfies
$\log W\le\log U+O(t_{\calB})+O(1)$ and therefore
$W<H^\kappa$.  Since $t_{\calB}=O(\ell\theta/x)$,
the same fixed leaf fragment works for all five types.

\subsection{A common check for recursive base nodes}

Whenever Lemma~\ref{l-lem:coding} is used, a recursive slot has
label $b=b_s$ and fine base center $W_f$ satisfying \eqref{l-eq-4.4}.
Its radius is a constant multiple, in the $O$ sense, of $\sigma/b$.
Thus
\begin{equation}
 \frac{\sigma}{b}=O(q\theta/x)=O(t_{\calB_s}).
 \label{l-eq-6.6}
\end{equation}
All recursive centers and their member bands therefore satisfy the
child location restrictions.  The child depth allowance is
$q_s=q+\log b$.
We verify the other restrictions separately in the five operations
below.  In particular no literal radius inequality follows merely
from \eqref{l-eq-6.6}, and none is required.

\subsection{The fixed-ratio operation \texorpdfstring{$\NN$}{N}}\label{l-sec:N}

Put $S=\varepsilon R V$.
The members have a common additive truncation above $S$.
Take its finite cells of large frames, with representative $D_m$
in each slot.  Then
\begin{equation}
 D-D_m=O(S),\qquad
 s_m=S/D_m\prec1,\qquad D_m\le2V.
 \label{l-eq-6.7}
\end{equation}
The last inequality follows because the member is asymptotic to $V$.
Use Lemma~\ref{l-lem:coding} on this lift family with
\begin{equation}
 \sigma=\theta s_m,\qquad \eta_m=\varepsilon V/D_m.
 \label{l-eq-6.8}
\end{equation}
Here $\sigma\prec\theta\prec\sigma_{\calB}$.
For each exact label $b$, its base quotient is bounded by the child
\begin{equation}
 \NN(W_f;\eta_m/b,\theta R).
 \label{l-eq-6.9}
\end{equation}
The width is $\theta s_m/b\prec\theta/x\prec1$ and
$\log(\theta R)=O(L)$.
Its depth obeys
\begin{equation}
 \log(b/\eta_m)
   \le \log b+\log(1/\varepsilon)+O(1)=O(q+\log b).
 \label{l-eq-6.10}
\end{equation}
All lifted frames lie in $\Fr(\calB)$: from $D_m\succ S$ and
$D_m\le2V$, their logarithmic distance from $V$ is
$O(\log(1/\varepsilon))=O(q)$.
The logarithmic precision $\eta_m$ makes differences of the frames
$O(\varepsilon V)$.

The remaining frames satisfy $D=O(S)$.
The upper endpoint must include every such frame, including $D>S$.
Set
\begin{equation}
 S^\#=\min(xS,V),\qquad
 R^\#=\frac{S^\#}{\varepsilon V}
                  =\min(xR,1/\varepsilon).
 \label{l-eq-6.11}
\end{equation}
Since $S\prec V$, every $D=O(S)$ satisfies $D<S^\#$ eventually.
Those $D<\varepsilon V$ disappear modulo $O(\varepsilon V)$;
all others belong to the same-coarse node
$\SSS(\varepsilon V;R^\#)$.
It is admissible because
\[
 \log R^\#\le\log R+\log x=O(L),\quad
 \log R^\#\le\log(1/\varepsilon)=O(q),\quad
 \log(\varepsilon V/U)=O(q).
\]
For clarity, put $Y_0=\varepsilon V$ and
$\mathcal M=\calL(V;\varepsilon R)$.  Index the finitely many
additive cells by $c\in\mathcal C$, and let
$\mathcal D_{c,m}$ be the actual lift family in its $m$-th
large-frame slot.  Put
\[
 X_{c,m}=q_{Y_0}(\mathcal D_{c,m}),\qquad
 X_0=q_{Y_0}\bigl(\Sigma^0\{D:\ D\text{ is a frame},\
                                      Y_0\le D\le Y_0R^\#\}\bigr).
\]
For $D\in\mathcal D_{c,m}$ one has $D/D_m=1+o(1)$ and
$\eta_mD_m=Y_0$.  Lemma~\ref{l-lem:quotient-maps}(ii), applied
after the restriction in Remark~\ref{l-rem:restricted-exp}, gives
\[
 |X_{c,m}|\le |(\log\mathcal D_{c,m})/O(\eta_m)|.
\]
Also $|X_0|=\SSS(Y_0;R^\#)$.  In the quotient, every member of
$\mathcal M$ is the sum of one coordinate from each large-frame
slot of some cell and one coordinate from $X_0$; frames below
$Y_0$ have zero image.  Consequently
\begin{equation}
 q_{Y_0}(\mathcal M)\subseteq
 \bigcup_{c\in\mathcal C}
 \bigl(X_{c,1}+\cdots+X_{c,k_c}+X_0\bigr),\qquad
 \NN(V;\varepsilon,R)
 \le\bigoplus_{c\in\mathcal C}
       \left(|X_0|\od\bigodot_{m=1}^{k_c}|X_{c,m}|\right).
 \label{l-eq:N-assembly}
\end{equation}
Here an empty list of large slots contributes the singleton
$\{0\}$ and an empty natural product is $1$.
The inequality follows from the explicit addition maps
\eqref{l-eq:finite-quotient-sum}, the finite-union bound, and
Lemma~\ref{l-lem:quotient-maps}(iii), since every member of
$\mathcal M$ is asymptotic to $V$.  The containing sums need not
belong to $\mathcal M$: the final map to
$\mathcal M/{\equiv_\varepsilon}$ is used only on the actual
subfamily $q_{Y_0}(\mathcal M)$.  This proves the $\NN$
operation, including its boundary region.

\subsection{The shallow positive-sum operation \texorpdfstring{$\SSS$}{S}}\label{l-sec:S}

If $R=O(1)$, all sums are $O(Y)$ and the count is one.
Otherwise put
\begin{equation}
 \sigma=\theta\log R,\qquad \eta=1/R.
 \label{l-eq-6.12}
\end{equation}
The input frames belong to $\Fr(\calB)$ and have a common logarithmic
truncation above $\sigma\preceq\sigma_{\calB}$.
Counting their logarithms modulo $O(1/R)$ bounds their images modulo
$O(Y)$, since they are at most $YR$.

In an exact slot split according to the fixed comparison of $b$
and $\sigma$.
If $b\succ\sigma$, use
\begin{equation}
 \NN(W_f;1/(Rb),R\sigma).
 \label{l-eq-6.13}
\end{equation}
Its width $\sigma/b$ is infinitesimal; its logarithmic ratio is
$\log R+\log\sigma=O(L)$ and its depth is
$\log R+\log b=O(q+\log b)$.
If $b\preceq\sigma$, use
\begin{equation}
 \FF(W_f;\sigma/b)\ \od\
       \sup_E\KK(E;\log(Rb)).
 \label{l-eq-6.14}
\end{equation}
The supremum ranges over the Archimedean classes meeting that base
band.  Each has an actual representative within it and so is located
in the child by \eqref{l-eq-6.6}.
The moving child in \eqref{l-eq-6.14} also satisfies
$\log(1+\sigma/b)=O(\log\sigma)=O(L)$.
Here $b=O(\sigma)$ gives $\log b=O(\log\sigma)=O(L)$, making
the $\KK$ node admissible; $\log(Rb)=O(q+\log b)$ and
$\log(Rb)\ge\log x$.
To justify \eqref{l-eq-6.14} explicitly, let
$\mathcal W=\calL(W_f;\sigma/b)$ and
$\delta=1/(Rb)$.  Since $\delta\prec1$, the map
\[
 \mathcal W/{\equiv_\delta}\longrightarrow
 \mathcal W/{\arch},\qquad [W]_{\equiv_\delta}\longmapsto[W]
\]
is a convex quotient map.  Its index has type at most
$\FF(W_f;\sigma/b)$, and its fibre over any class $E$ has type
at most $\KK(E;\log(Rb))$.  Lemma~\ref{l-lem:quotient-maps}(i)
gives exactly the natural product in \eqref{l-eq-6.14}; using
the whole admissible class $E$ only enlarges its fibre.

Let $\mathcal D=\{D:\ D\text{ is a frame},\ Y\le D\le YR\}$,
and let $A\ge2$ bound the actual quotient
$(\log\mathcal D)/O(1/R)$ furnished by the coding lemma.
With $Z=YR$, the map
\[
 (\log\mathcal D)/O(1/R)\longrightarrow q_Y(\mathcal D),
 \qquad [\log D]\longmapsto q_Y(D)
\]
is justified by Lemma~\ref{l-lem:quotient-maps}(ii), with
representatives chosen in $\mathcal D$.  Thus the positive
set $P_Y=q_Y(\mathcal D)\setminus\{0\}$ has type at most $A$.
Its Archimedean alphabet has type at most $\omega^s$ by the
width-$L$ hypothesis and \eqref{l-eq:positive-quotient-alphabet}.
Additivity now identifies the required quotient with
$\Sigma^0P_Y$.  Applying (BM3) gives
\begin{equation}
 \SSS(Y;R)=|\Sigma^0P_Y|
       \le(A^\omega)^{\od\omega^s}.
 \label{l-eq-6.15}
\end{equation}
The empty and singleton cases are immediate; otherwise adjoining
the least element $0$ is absorbed by the displayed infinite
limit majorant.  Every recursive node in
\eqref{l-eq-6.13}--\eqref{l-eq-6.14} is at a coarse child.

\subsection{The Archimedean-fibre operation \texorpdfstring{$\KK$}{K}}\label{l-sec:K}

Fix a representative $V\in E$ and put $\varepsilon=e^{-t}$.
In every decomposition of a member of $E$, separate the frames
$D\arch V$ from those $D\prec V$.
The top frames have common logarithmic prefix above $1$.
Apply Lemma~\ref{l-lem:coding} with $\sigma=1$, $\eta=\varepsilon$.
The recursive base nodes are
\begin{equation}
 \NN(W_f;\varepsilon/b,e^t).
 \label{l-eq-6.16}
\end{equation}
Their widths are $1/b$, their logarithmic ratios are $t=O(L)$,
and their depths are $t+\log b=O(q+\log b)$.
Since $D\arch V$, this coding also counts their additive images
modulo $O(\varepsilon V)$.
All those positive images have one Archimedean class.
If their count is at most $A\ge2$, \eqref{ext-sums} bounds all their finite sums
by $A^\omega$.

For each smaller frame, \eqref{l-eq-2.4} gives $D=O(V/x)<V/\theta$.
Discarding $D<\varepsilon V$, the other small frames belong to
the same-coarse node
\begin{equation}
 \SSS(\varepsilon V;e^t/\theta).
 \label{l-eq-6.17}
\end{equation}
Its logarithmic ratio is $t-\log\theta=O(L)$ and $O(q)$;
it is at least $1$ because $t\ge\log x$.
The center condition is $\log(\varepsilon V/U)=O(q)$.
Put $Y_0=\varepsilon V$ and let
$\mathcal D_{\rm top}$ be the actual top-frame family.
Its logarithmic coding quotient maps increasingly onto
$P_{\rm top}=q_{Y_0}(\mathcal D_{\rm top})$, by
Lemma~\ref{l-lem:quotient-maps}(ii) with $Z=V$.
The memberwise bound $D=O(V)$ suffices for this map; its real
constant need not be uniform over the family.  All these images
are positive and have one Archimedean class.  Thus, if $A\ge2$ bounds the
logarithmic frame count, put
\[
 X_{\rm top}=\Sigma^+P_{\rm top},\qquad
 X_{\rm small}=q_{Y_0}\bigl(\Sigma^0\{D:\ D\text{ is a frame},\
                                  Y_0\le D\le V/\theta\}\bigr).
\]
Then $|X_{\rm top}|\le A^\omega$ by (BM3), while
$|X_{\rm small}|=\SSS(\varepsilon V;e^t/\theta)$.
Every member of $E$ has at least one top frame, and its quotient
lies in $X_{\rm top}+X_{\rm small}$.  Therefore the addition map
and the restriction to the actual member quotient give
\begin{equation}
 \KK(E;t)=|q_{Y_0}(E)|
 \le |X_{\rm top}|\od|X_{\rm small}|
 \le A^\omega\od\SSS(\varepsilon V;e^t/\theta).
 \label{l-eq:K-assembly}
\end{equation}
The first equality is Lemma~\ref{l-lem:quotient-maps}(iii): on
$E$, additive equivalence modulo $O(\varepsilon V)$ coincides
with $\equiv_\varepsilon$.  No sum outside $E$ is used as a
representative for that multiplicative quotient.

\subsection{The moving Archimedean-band operation \texorpdfstring{$\FF$}{F}}\label{l-sec:F}

For $w=O(1)$ the count is one.
Otherwise set
\begin{equation}
 \sigma=\theta w,\qquad \eta=1.
 \label{l-eq-6.18}
\end{equation}
A dominant frame of a member has logarithmic distance $O(w)$
from $V$ and hence a common prefix above $\sigma$.
These frames lie in $\Fr(\calB)$.
Moreover $\sigma=O(q\theta^2/x)\preceq\sigma_{\calB}$.
Every member has the Archimedean class of a dominant frame.
Frame coding at precision $1$ uses, for $b\succ\sigma$,
\begin{equation}
 \NN(W_f;1/b,\sigma),
 \label{l-eq-6.19}
\end{equation}
and, for $b\preceq\sigma$,
\begin{equation}
 \FF(W_f;\sigma/b)\ \od\ \sup_E\KK(E;\log b).
 \label{l-eq-6.20}
\end{equation}
In \eqref{l-eq-6.19}, $\log\sigma=\log w+\log\theta=O(L)$
by the added restriction $\log(1+w)=O(L)$.
In \eqref{l-eq-6.20}, $\log b=O(\log\sigma)=O(L)$.
Its new moving radius obeys $\sigma/b\le\theta w/x<w$ eventually,
so $\log(1+\sigma/b)=O(L)$ as required.
The depth and location conditions follow from \eqref{l-eq-6.6} and
$\log b\le q_s$.  All recursive nodes are at coarse children.

For \eqref{l-eq-6.20}, project the base-band quotient at precision
$1/b$ to its Archimedean quotient.  The index count is
$\FF(W_f;\sigma/b)$ and each convex fine fibre is bounded by
$\KK(E;\log b)$, so Lemma~\ref{l-lem:quotient-maps}(i)
gives the displayed product.  Let $\mathcal D_{\rm dom}$ be
the family of actual frames that occur as dominant frames of
members of $\calL(V;w)$.  For a decomposition with $k$ frames,
its dominant frame $D$ satisfies $D\le W\le kD$, hence
$[W]=[D]$.  Accordingly there is an inclusion of ordered class
sets
\[
 \calL(V;w)/\arch\ \subseteq\ \mathcal D_{\rm dom}/\arch.
\]
Frame coding bounds the right side, which is exactly
$(\log\mathcal D_{\rm dom})/O(1)$.  This is an inclusion of
class sets, so no monotonicity of a choice of dominant frame is
required.  In particular, when child primitive counts are at most
$\EE(d)$, $d\ge a_0$, the resulting $\FF$ bound is
$\EE(d+2)$.

\begin{lemma}[One ordinary coarse step]\label{ordinary-step}
Under the width-$L$ alphabet bound $\omega^s$, let
$c=s\hs\omega\hs10$. If all four ordinary primitive counts
at each coarse child are at most $\EE(d)$, $d\ge a_0$, then
all four counts at the parent are at most $\EE(d\hs c)$.
\end{lemma}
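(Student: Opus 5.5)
The plan is to treat the four ordinary primitive counts $\NN,\SSS,\KK,\FF$ separately. Each is bounded at the parent by the recursion already set out in Sections~\ref{l-sec:N}--\ref{l-sec:F}: first bound the logarithmic coding quotient by Lemma~\ref{l-lem:coding}, then collect the ordinal bookkeeping in a common estimate. The input is a uniform bound $\EE(d)$ on every child primitive count. By Lemma~\ref{l-lem:coding}, including its clause allowing a type~(ii) coordinate to be a natural product of two primitive bounds, as in \eqref{l-eq-6.14} and \eqref{l-eq-6.20}, every logarithmic coding quotient at the parent is at most $A=\EE(d+2)$. This applies to each large-frame slot in $\NN$, to the frame set in $\SSS$, to the top frames in $\KK$, and to the dominant frames in $\FF$.

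First I will settle the two operations that do not call a same-coarse $\SSS$ node. The bound for $\SSS$ is \eqref{l-eq-6.15}. Since $(\EE(d+2))^\omega=\omega^{\omega^{d+2}\cdot\omega}=\omega^{\omega^{d+3}}$ at a limit exponent, (BM3)--(BM4) give
\[
 \SSS\le\bigl(\omega^{\omega^{d+3}}\bigr)^{\od\omega^s}
      =\omega^{\omega^{d+3+s}}=\EE(d+3+s).
\]
Because $d+3+s\le d\hs s\hs 3$, this is at most $\EE(d\hs s\hs3)$. For $\FF$ the last paragraph of Section~\ref{l-sec:F} already gives $\EE(d+2)$.

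Next I handle the two operations that call a same-coarse $\SSS$ node, namely \eqref{l-eq:K-assembly} and \eqref{l-eq:N-assembly}. For these I substitute the $\SSS$ bound just obtained. For $\KK$, the top-frame factor is $A^\omega\le\EE(d+3)$. Its natural product with $\EE(d\hs s\hs3)$ is $\omega^{\omega^{d+3}\hs\omega^{d\hs s\hs 3}}\le\EE(d\hs s\hs4)$. For $\NN$ there are finitely many cells, each a natural product of finitely many factors $\EE(d+2)$ with one factor $\SSS\le\EE(d\hs s\hs3)$. A finite natural product of these stays below $\EE(d\hs s\hs4)$, since $\EE(e)^{\od N}<\EE(e+1)$. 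The finite natural sum over cells stays below $\EE(d\hs s\hs5)$. All four counts are therefore at most $\EE(d\hs s\hs5)\le\EE(d\hs c)$, because $c=s\hs\omega\hs10\ge s\hs5$ and $\hs$ is monotone. The summand $\omega$ in $c$ is slack. I will keep it so that $d\hs c$ is at least a limit-type increment, which the later tier iteration presumably needs; it is not needed in this step.

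The main obstacle is admissibility. Every recursive node produced must be an admissible ordinary node at an actual coarse child, so that the hypothesis $\EE(d)$ applies; otherwise, for the $\SSS$ nodes, it must be at the same coarse node. The parameter checks are already written out beside each display, \eqref{l-eq-6.9}--\eqref{l-eq-6.20}, together with the common location check \eqref{l-eq-6.6}. The one real subtlety is the $\SSS$ node that reappears inside $\NN$ and $\KK$ at the same parent. This step is not circular, because the $\SSS$ bound itself only invokes children. I will therefore order the argument as $\SSS$ first, then $\FF$, then $\KK$ and $\NN$, and check that the $\SSS$ nodes $\SSS(\varepsilon V;R^\#)$ and $\SSS(\varepsilon V;e^t/\theta)$ satisfy \eqref{l-eq-6.2}. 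These checks are also already recorded in the text.
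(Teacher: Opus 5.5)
Your proposal is correct and follows essentially the same route as the paper. Fine coding gives $\EE(d+2)$ and the positive-sum operation gives $\EE((d+3)+s)$; $\SSS$ and $\FF$ are computed from proper children first, and $\NN,\KK$ then absorb the same-node $\SSS$ count in finite natural products and unions. The only difference is cosmetic: the paper bounds everything by $\EE(((d+3)+s)+2)$ and compares with $d\hs s\hs5$, whereas you pass through $d\hs s\hs3$ and $d\hs s\hs4$ directly.
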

\begin{proof}
Fine coding gives $\EE(d+2)$ for each frame family.
The positive-sum operation has bound
\begin{equation}
 \bigl(\EE(d+2)^\omega\bigr)^{\od\omega^s}
 =\EE((d+3)+s).
 \label{ordinary-cost}
\end{equation}
The moving Archimedean count already has the bound $\EE(d+2)$.
Set $v=(d+3)+s$.  The same-node positive-sum count is at most
$\EE(v)$, and every large-frame coordinate in
\eqref{l-eq:N-assembly} is at most $\EE(d+2)\le\EE(v)$.
For $\KK$, equation \eqref{l-eq:K-assembly} combines a
top-frame sum of type at most
$\EE(d+2)^\omega=\EE(d+3)$ with a same-node positive-sum
count at most $\EE(v)$.  These are finite products and, for
$\NN$, a finite union of such products.  For a fixed positive
integer $k$,
\[
 \EE(v)^{\od k}=\omega^{\omega^v\cdot k};
\]
any fixed finite natural sum of these terms is less than
$\omega^{\omega^{v+1}}=\EE(v+1)$, and in particular at most
$\EE(v+2)$.  Thus all four operations are bounded by
\[
 \EE(((d+3)+s)+2)\le\EE(d\hs c),
 \qquad ((d+3)+s)+2\le d\hs s\hs5<d\hs c.
\]
Compute $\SSS,\FF$ first, then $\NN,\KK$; the first pair
uses proper coarse children only.
\end{proof}

\section{Window scales at a fixed tower cap}\label{window-tiers}

Fix \(n\ge2\) and a fixed integer \(m\ge2\), and set
\begin{equation}
 H=E_{n-1}(x^m),\quad T=E_n(x^m),\quad
 J=n-2,\qquad L_j=E_j(x)\quad(0\le j\le J).
 \label{h-eq-3.1}
\end{equation}
The window scales deliberately use \(x\), while the cap uses
\(x^m\).

\begin{lemma}\label{h-lem:scales}
These parameters satisfy
\begin{equation}
 x=O(\ell),\qquad \log\ell=O(L_J),\qquad
 xL_j=O(\ell)\quad(0\le j\le J).
 \label{h-eq-3.2}
\end{equation}
Also \(L_0=x\) and \(\log L_{j+1}=(\log2)L_j\).
\end{lemma}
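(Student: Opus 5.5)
The plan is a direct computation with the explicit towers, using only $\ell=\log H$ and $H=E_{n-1}(x^m)$ with $n\ge2$. First I record $\ell$ concretely. Since $H=2^{E_{n-2}(x^m)}$, we get
\[
 \ell=(\log2)\,E_{n-2}(x^m)=(\log 2)\,E_J(x^m).
\]
The two identities $L_0=E_0(x)=x$ and $\log L_{j+1}=\log 2^{L_j}=(\log2)L_j$ are immediate from the definitions, so I state them first.

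For $x=O(\ell)$, note that $E_J(x^m)\ge x^m\ge x$, because each $E_j$ is increasing and satisfies $E_j(u)\ge u$. Hence $\ell\ge(\log2)x$.

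For $\log\ell=O(L_J)$, the case $J=0$ gives $\log\ell=m\log x+O(1)=O(x)=O(L_0)$. For $J\ge1$,
\[
 \log\ell=\log\log2+(\log 2)\,E_{J-1}(x^m),
\]
so it suffices to show $E_{J-1}(x^m)=O(E_J(x))$. I would prove the stronger fact $E_{j-1}(x^m)\prec E_j(x)$ for $j\ge1$ by induction on $j$. The base case is $x^m\prec 2^x$. For the step, if $E_{j-1}(x^m)\prec E_j(x)$, then $E_j(x)-E_{j-1}(x^m)\to+\infty$. Exponentiating, $E_j(x^m)/E_{j+1}(x)=2^{-(E_j(x)-E_{j-1}(x^m))}\to0$.

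For $xL_j=O(\ell)$ with $0\le j\le J$, monotonicity in $j$ makes it enough to treat $j=J$, and we need $xE_J(x)=O(E_J(x^m))$. When $J=0$ this reads $x^2\le x^m$, which holds because $m\ge2$; this is exactly where the hypothesis $m\ge2$ is used. When $J\ge1$, I use $E_J(x)^2=2^{2E_{J-1}(x)}\le E_J(x^m)$. The inequality holds because $2E_{J-1}(x)\le E_{J-1}(x^m)$ for $m\ge2$, which is again a simple induction: $2x\le x^m$, and $2\cdot2^u\le 2^{v}$ whenever $u+1\le v$. Since $x\le E_J(x)$, this gives $xE_J(x)\le E_J(x)^2\le E_J(x^m)$.

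No step is a real obstacle. The only care needed is to keep the comparisons as eventual inequalities between Skolem germs, which transfer to $\T$ by (LE1).
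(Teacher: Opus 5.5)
Your proof is correct and follows the paper's route: write $\ell=(\log 2)E_J(x^m)$, separate $J=0$ (where $m\ge2$ enters through $x^2\le x^m$) from $J\ge1$, and compare the towers directly. The differences are only in the sub-steps. For the $\log\ell$ bound, the paper gets $E_{J-1}(x^m)<E_J(x)$ in one line by applying the increasing map $E_{J-1}$ to $x^m<2^x$, so your induction proving $\prec$ is valid but unnecessary. For the $xL_j$ bound, the paper proves $E_k(x^m)/E_k(x)\succ x$, which gives $xL_J=o(\ell)$ when $J\ge1$; your bound $xE_J(x)\le E_J(x)^2\le E_J(x^m)$ is equally valid and suffices for the $O$-statement.
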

\begin{proof}
For \(n=2\), \(\ell=(\log2)x^m\) and \(J=0\), so the
claims follow from \(m\ge2\).
For \(n\ge3\), \(\ell=(\log2)E_{n-2}(x^m)\).
Since \(x^m<2^x\) eventually,
\[
 E_{n-3}(x^m)<E_{n-2}(x)=L_J,
\]
which proves the logarithmic bound. For every fixed \(k\ge1\),
\(E_k(x^m)/E_k(x)\succ x\).
For \(k=1\) this follows from \(2^{x^m-x}\succ x\); induction
follows by taking the difference of the preceding exponents.
Thus \(xL_J=o(\ell)\), and the same holds for smaller \(j\).
\end{proof}

Throughout the window induction, frames are restricted to $<T$.
Let \(A_j\) be a single ordinal bounding the Archimedean classes
of all such frames in every interval \([Y,YR]\), \(R\ge1\), with
\(\log R=O(L_j)\), uniformly over \(Y,R\). For the moment this
is a quantity to be bounded, not an assumption at all levels.
Proposition~\ref{shallow} supplies \(A_0=\omega^\omega\), since
\(L_0=x\).

The same bound controls the Archimedean alphabet of the nonzero
frame images modulo the additive subgroup \(O(Y)\). Indeed the
frames \(O(Y)\) vanish; for frames \(D\succ Y\), that quotient
preserves Archimedean equivalence and inequivalence.

At tier $j$ use the four ordinary families of
Section~\ref{l-ordinary-defs} with $L=L_j$. We may write
$\NN_j,\SSS_j,\KK_j,\FF_j$ to display the tier. In particular,
the moving family has the essential condition
$\log(1+w)=O(L_j)$. All depth and center conditions are retained.

\section{Sequential window and ordinal induction}

Define, in this order,
\begin{equation}
 \begin{aligned}
 s_0&=\omega,\\
 c_j&=s_j\hs\omega\hs10,\\
 \nu_j&=\bigl(a_0\hs(c_j\cdot\omega)\bigr)+1,\\
 s_{j+1}&=\omega^{\nu_j+2}\qquad(j<J).
 \end{aligned}
 \label{h-eq-5.1}
\end{equation}

\begin{proposition}\label{h-prop:bootstrap}
For every \(0\le j\le J\):
\begin{enumerate}[label=\textup{(\roman*)},leftmargin=2.5em]
\item every width-\(L_j\) frame alphabet has type at most
\(\omega^{s_j}\);
\item every admissible ordinary tier-\(j\) node, at every coarse
node, has count at most \(\EE(\nu_j)\).
\end{enumerate}
\end{proposition}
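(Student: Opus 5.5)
The proof is an induction on the tier $j$. At each tier, item (i) is used to prove item (ii). Then (ii) at tier $j$ is used to obtain (i) at tier $j+1$.

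\emph{Base.} For $j=0$, Proposition~\ref{shallow} gives (i) with $s_0=\omega$, since $L_0=x$ and the width-$x$ alphabet has type at most $\omega^{2r}<\omega^\omega$. Here $r$ depends on the constant in $\log R=O(x)$, and all such $r$ are majorised uniformly by $\omega^\omega$.

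\emph{From (i) to (ii) at a fixed tier.} Assume (i) at tier $j$. I would prove (ii) by induction on the finite coarse tree of Lemma~\ref{l-lem:tree}, going from the leaves upward.

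At a leaf, every ordinary node is bounded by $\EE(a_0)$, by the observation following the definitions (A)--(E) and Lemma~\ref{l-lem:payload}.

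At a nonleaf, Lemma~\ref{ordinary-step} applies with $s=s_j$ and $c=c_j$. It turns a bound $\EE(d)$ at all children, with $d\ge a_0$, into $\EE(d\hs c_j)$ at the parent. A node whose subtree has height $h$ therefore gets the bound $\EE(a_0\hs c_j\cdot h)$.

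The difficulty is that the coarse tree is finite but its height is not uniformly bounded over all coarse roots, and (ii) must hold at every coarse node. I would take the supremum over $h<\omega$. Because $a_0\hs c_j\cdot h< a_0\hs(c_j\cdot\omega)$, we get $\EE(a_0\hs c_j h)<\EE(\nu_j)$, with the $+1$ in $\nu_j$ supplying slack. Here I would use (BM4), in the form $\sup_k u\od k=u\cdot\omega$, to check that natural sums with $c_j\cdot h$ stay below $a_0\hs(c_j\cdot\omega)$.

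One more point needs care. Lemma~\ref{ordinary-step} is stated for the four ordinary primitives. The recursive nodes in \eqref{l-eq-6.9}, \eqref{l-eq-6.13}, \eqref{l-eq-6.16}, \eqref{l-eq-6.19} and \eqref{l-eq-6.20} are all tier-$j$ admissible at children. I have to check that their width conditions $\log R=O(L_j)$ and $\log(1+w)=O(L_j)$ hold, which was verified in the operation subsections. I also need the depth allowance $q_s=O(\ell)$ and $q_s\ge x$ from \eqref{l-eq-4.5}, together with $x=O(\ell)$ and $xL_j=O(\ell)$ from Lemma~\ref{h-lem:scales}.

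\emph{From (ii) at tier $j$ to (i) at tier $j+1$.} This is the main obstacle. Take frames in a window $[Y,YR]$ with $\log R=O(L_{j+1})$, and put $\log R=O(2^{L_j})$. I would cover the window's Archimedean classes by a set of type at most $\omega^{s_{j+1}}=\omega^{\omega^{\nu_j+2}}$, as follows.

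First, since $\log D$ ranges over an interval of length $O(2^{L_j})$, I would fix an actual frame $D_0$ and write $\log(D/D_0)=O(e^{p})$ with $p=O(L_j)$.

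The Archimedean class of $D$ is $\log D$ modulo $O(1)$. I would count this with a tier-$j$ node of type $\FF$ (or $\GG$) whose moving radius $w$ satisfies $\log(1+w)=O(L_j)$. This is exactly the admissibility condition of tier $j$, and it requires the coarse root to be chosen with center $U=D_0$ and $q\arch\ell$. That choice is legitimate because $L_{j+1}\le\log\ell$-scale quantities are $O(\ell)$, by Lemma~\ref{h-lem:scales}.

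This gives at most $\EE(\nu_j)$ classes per band. Covering the window by a finite union, or an $\omega$-indexed union, of such bands costs at most $\EE(\nu_j)\cdot\omega\le\EE(\nu_j+1)$. Since $\EE(\nu_j+1)\le\omega^{\omega^{\nu_j+2}}=\omega^{s_{j+1}}$, the bound follows. The extra exponent margin is there to absorb the polynomial factor and the coarse-cell unions.

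I expect the delicate point to be verifying that a single band at tier $j$ genuinely covers the ratio $e^{O(L_{j+1})}$ within the admissibility constraints. If it does not, I would fall back on the positive-sum count $\SSS_j$ combined with (BM3).
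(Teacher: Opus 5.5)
Your proposal is correct and follows the paper's proof. It gets (i)$_0$ from Proposition~\ref{shallow}, then (ii)$_j$ by height induction on the finite coarse tree via Lemma~\ref{ordinary-step}, with the supremum over heights absorbed into $\nu_j=(a_0\hs(c_j\cdot\omega))+1$. It gets (i)$_{j+1}$ from a tier-$j$ moving node $\FF_j(D_0;L_{j+1})$ at a fresh root centred at an actual frame $D_0$.

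The point you flagged resolves immediately, so the union and $\SSS_j$ fallbacks are unnecessary. The single band contains the whole window because $\log(D/D_0)=O(L_{j+1})$. It is admissible because $\log(1+L_{j+1})=O(L_j)$ and $L_{j+1}=O(q\theta/x)$; the latter is where $xL_{j+1}=O(\ell)$ from Lemma~\ref{h-lem:scales} enters, for your $q\arch\ell$ as well as for the paper's $q=x+xL_{j+1}$. Do not use $\GG$ here: its bound in Proposition~\ref{l-prop:deep} depends on this very proposition, so that would be circular.
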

\begin{proof}
Assertion (i) at \(j=0\) is Proposition~\ref{shallow}.
Given (i) at tier \(j\), first prove (ii) by induction on the
finite coarse tree. If child primitive counts are \(\le\EE(d)\),
fine frame counts are \(\le\EE(d+2)\). The positive-sum operation
costs
\begin{equation}
 \bigl(\EE(d+2)^\omega\bigr)^{\od\omega^{s_j}}
       =\EE\bigl((d+3)+s_j\bigr).
 \label{h-eq-5.2}
\end{equation}
Here \(s_j>0\), so the outer natural power has a limit exponent.
The finite products and unions for \(\NN_j,\KK_j\), and the
top-frame sum in \(\KK_j\), are absorbed by
\(\EE(d\hs c_j)\); indeed
\[
 ((d+3)+s_j)+2\le d\hs s_j\hs5<d\hs c_j.
\]
Compute \(\SSS_j,\FF_j\) first and
then \(\NN_j,\KK_j\) at the node, as in Lemma~\ref{ordinary-step}.
Thus at coarse height \(h\) every count is at most
\begin{equation}
 \EE\bigl(a_0\hs(c_j\od(h+1))\bigr)\le\EE(\nu_j).
 \label{h-eq-5.3}
\end{equation}
Actual leaves have the stronger bound \(\EE(a_0)\); terminal
nonleaves are covered by the one-step bound with \(d=a_0\).
The last inequality follows from
\(\sup_{k<\omega}c_j\od k=c_j\cdot\omega\).
This proves (ii) uniformly in the root, the finite tree height,
all local parameters, and the constants in the \(O\)-relations.

Only after (ii) has been established do we prove (i) for \(j+1\).
Take any nonempty width-\(L_{j+1}\) frame window and choose an
actual frame \(D_0\) in it. Every frame in the window has
\(\log(D/D_0)=O(L_{j+1})\). Introduce a fresh coarse root
\begin{equation}
 U=V=D_0,\qquad q=x+xL_{j+1},\qquad w=L_{j+1}.
 \label{h-eq-5.4}
\end{equation}
By Lemma~\ref{h-lem:scales}, \(q=O(\ell)\).
The center displacement is zero,
\(w=O(q\theta/x)\), and
\[
 \log(1+w)=O(L_j).
\]
Consequently \(\FF_j(D_0;L_{j+1})\) is admissible and its
family contains the entire frame window. Its quotient is the
Archimedean quotient. The uniform bound in (ii) therefore gives
\begin{equation}
 A_{j+1}\le\EE(\nu_j)
       \le\omega^{\omega^{\nu_j+2}}=\omega^{s_{j+1}}.
 \label{h-eq-5.5}
\end{equation}
The order of proof is (i)\(_j\), then (ii)\(_j\), then
(i)\(_{j+1}\); there is no same-tier assumption of the alphabet
being proved.
\end{proof}

\begin{remark}
The factor \(x\) in \eqref{h-eq-5.4} ensures the required radius bound.
Using merely \(q=x+L_{j+1}\) would generally fail
\(w=O(q\theta/x)\). A uniform bound on the window's center is
not required: ordinary tier \(j\) was established for every
coarse root before \eqref{h-eq-5.4} was introduced.
\end{remark}

\section{The moving deep-band operation \texorpdfstring{$\GG$}{G}}\label{l-sec:G}

Use the deep family defined in Section~\ref{l-ordinary-defs},
and let $J$ be the final window tier. Write $\varepsilon=e^{-p}$ and inflate the $O(w)$ width to
\begin{equation}
 z=\theta w,\qquad
 Y=e^{-p-z}V,\qquad Z=e^zV.
 \label{l-eq-6.21}
\end{equation}
For every member $W$ one has $e^{-z}V\le W\le Z$ eventually.
In its frame decomposition discard $D<Y$ and retain the frames
in $[Y,Z]$.  The discarded finite sum is $O(Y)$ and
$Y\le\varepsilon W$.
If two member values agree modulo additive $O(Y)$, they are
$\equiv_\varepsilon$-equivalent.  Thus the additive quotient is
\emph{finer}, and its size is a valid upper bound for $\GG$.
No choice of a representative of a coarser class is used to define
a finer quotient.

All retained frames belong to $\Fr(\calB)$, since
$p=O(q)$, $z=O(q\theta^2/x)=o(q)$, and
$\log(V/U)=O(t_{\calB})=o(q)$.
Put
\begin{equation}
 \sigma=\theta(p+2z)=O(q\theta),\qquad
 \eta=e^{-p-2z}=Y/Z.
 \label{l-eq-6.22}
\end{equation}
They have a common prefix above $\sigma$.
At logarithmic precision $\eta$ the recursive count coordinate is
\begin{equation}
 \GG(W_f;\sigma/b,\ p+2z+\log b).
 \label{l-eq-6.23}
\end{equation}
Its depth is at least $\log x$ and is $O(q+\log b)$.
Its radius and center satisfy \eqref{l-eq-6.6}.
Let $\mathcal D=\{D:\ D\text{ is a frame},\ Y\le D\le Z\}$
and let $A\ge2$ bound its actual logarithmic quotient at precision
$\eta=Y/Z$.  By Lemma~\ref{l-lem:quotient-maps}(ii), the map
\[
 (\log\mathcal D)/O(\eta)\longrightarrow q_Y(\mathcal D),
 \qquad [\log D]\longmapsto q_Y(D)
\]
is increasing and onto.  Here the representatives are chosen
inside $\mathcal D$, as in Remark~\ref{l-rem:restricted-exp},
so they obey the essential upper bound $D\le Z$.
Consequently $P_Y=q_Y(\mathcal D)\setminus\{0\}$ has type
at most $A$.

The Archimedean alphabet of those positive frame images is bounded
separately, by applying Lemma~\ref{l-lem:coding} to the same frame
family with precision $1$.
This uses \eqref{l-eq-6.19}--\eqref{l-eq-6.20}, now with $\sigma$ from \eqref{l-eq-6.22}.
Their parameters are admissible because
$\log\sigma=O(\log q+\log\theta)=O(\log\ell)=O(L_J)$.
The wide moving coordinates also have $\log(1+\sigma/b)=O(L_J)$.
Thus every alphabet-counting node is admissible at ordinary tier $J$.
Let $F\ge1$ bound the actual frame alphabet
$|\mathcal D/\arch|$ supplied by this separate coding.
Lemma~\ref{l-lem:quotient-maps}(iv) gives
$|P_Y/\arch|\le F$.  Put $\mathcal M=\calL(V;w)$.
Discarding the frames below $Y$ changes a member only by
$O(Y)$, so
\[
 q_Y(\mathcal M)\subseteq\Sigma^0P_Y.
\]
Finally, the lower member bound $Y\le\varepsilon W$ makes
\[
 q_Y(\mathcal M)\longrightarrow\mathcal M/{\equiv_\varepsilon},
 \qquad q_Y(W)\longmapsto[W]_{\equiv_\varepsilon}
\]
a weakly increasing surjection by
Lemma~\ref{l-lem:quotient-maps}(iii).  Restricting to this
actual member quotient before taking the last map and applying
(BM3) therefore give
\begin{equation}
 \GG(V;w,p)\le |q_Y(\mathcal M)|
       \le|\Sigma^0P_Y|\le(A^\omega)^{\od F}.
 \label{l-eq-6.24}
\end{equation}
As before, zero is absorbed by the infinite majorants used in
the reserve calculation.  All deep recursive nodes and all
alphabet-counting nodes are at coarse children.

Define
\begin{equation}
 c'=\omega^{\nu_J+\lambda+3}+a_0+3,\qquad
 g=c'\cdot\omega+a_0\cdot2.
 \label{deep-reserve}
\end{equation}

\begin{proposition}[Uniform deep reserve]\label{l-prop:deep}
Every admissible $\GG$ node is bounded by $\EE(g)$.
\end{proposition}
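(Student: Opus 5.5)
Induction on the finite coarse tree of Lemma~\ref{l-lem:tree}, proving at coarse height $h$ that every admissible $\GG$ node is bounded by $\EE(c'\od(h+1)\hs a_0\cdot2)$. Taking the supremum over $h<\omega$ via (BM4), $\sup_k c'\od k=c'\cdot\omega$, then gives $\EE(g)$ uniformly in the root and all parameters.

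\textbf{Leaves and one-step bound.} At a coarse leaf, every $\GG$ node is bounded by $\EE(a_0)$, since the members lie in $\Sk_{<H^\kappa}$ by the leaf remark after Section~\ref{l-ordinary-defs}. At a nonleaf we use \eqref{l-eq-6.24}, $\GG(V;w,p)\le(A^\omega)^{\od F}$, and bound $A$ and $F$ separately.

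\textbf{The alphabet factor $F$.} This factor comes from coding with precision $1$ via \eqref{l-eq-6.19}--\eqref{l-eq-6.20}. All its recursive nodes are ordinary tier-$J$ nodes, which the text checks are admissible, so Proposition~\ref{h-prop:bootstrap}(ii) gives child counts at most $\EE(\nu_J)$. Lemma~\ref{l-lem:coding} then gives $F\le\EE(\nu_J+2)$, independently of $h$. This decoupling is the point: the deep recursion never feeds back into the exponent of the natural power.

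\textbf{The frame count $A$.} This factor comes from Lemma~\ref{l-lem:coding} at precision $\eta$. Its type~(ii) coordinates are the child nodes \eqref{l-eq-6.23}, which by the induction hypothesis are at most $\EE(d)$ with $d=c'\od h\hs a_0\cdot2\ge a_0$. So $A\le\EE(d+2)$ and $A^\omega\le\EE(d+3)$. By (BM4) at the limit exponent,
\[
(\EE(d+3))^{\od F}\le\omega^{\omega^{d+3}\cdot\omega^{\omega^{\nu_J+2}}}
=\EE\bigl((d+3)+\omega^{\nu_J+2}\bigr).
\]
Since $(d+3)+\omega^{\nu_J+2}\le d\hs c'$, this is at most $\EE(c'\od(h+1)\hs a_0\cdot2)$; the summands $\lambda+3$ and $a_0+3$ in $c'$ leave slack.

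\textbf{Main obstacle.} The step I expect to be hardest is verifying that \eqref{l-eq-6.23} is admissible at the child $\calB_s$. Its depth $p+2z+\log b$ must be $O(q_s)$ and at least $\log x$. Its radius and center conditions come from \eqref{l-eq-6.6}, and $z=o(q)$ is needed. Every recursive occurrence must sit at a proper coarse child, so that the induction on tree height is well founded and uniform: no uniform constants in $O(\cdot)$ may be needed as $j$ varies along a branch. I would check the depth bound using $q_s=q+\log b$ and $p=O(q)$, exactly as in \eqref{l-eq-6.10}.
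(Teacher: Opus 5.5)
Your proposal is correct and follows the paper's proof essentially step by step. You use the same one-step estimate $(\EE(d+2)^\omega)^{\od\EE(\nu_J+2)}=\EE((d+3)+\omega^{\nu_J+2})\le\EE(d\hs c')$, with $F$ bounded independently of the deep recursion via Proposition~\ref{h-prop:bootstrap}, followed by induction on coarse height and the limit $\sup_k c'\od k=c'\cdot\omega$. The only difference is bookkeeping: you carry $c'\od(h+1)\hs a_0\cdot2$ where the paper carries $a_0\hs(c'\od(h+1))$. Your final absorption into $g$ is valid, but it needs $a_0<\omega^{\nu_J+\lambda+3}$, the leading term of $c'$; (BM4) alone would leave $a_0\cdot4$ rather than $a_0\cdot2$.
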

\begin{proof}
If the child deep counts are bounded by $\EE(d)$, $d\ge a_0$,
the frame count in \eqref{l-eq-6.23} is at most $A=\EE(d+2)$.
Proposition~\ref{h-prop:bootstrap} and the separate alphabet coding
give $F\le\EE(\nu_J+2)$.
This is a limit ordinal, so \eqref{l-eq-6.24} yields
\begin{equation}
 \begin{aligned}
 \GG(V;w,p)
 &\le \bigl(\EE(d+2)^\omega\bigr)^{\od\EE(\nu_J+2)}\\
 &=\EE\bigl((d+3)+\omega^{\nu_J+2}\bigr)
 \le\EE(d\hs c').
 \end{aligned}
 \label{l-eq-7.4}
\end{equation}
The last inequality follows from
$c'\ge\omega^{\nu_J+\lambda+3}>\omega^{\nu_J+2}$.
The actual-leaf bound is $\EE(a_0)$.
Induction on height, including terminal nonleaves as above, gives
\begin{equation}
 \GG(V;w,p)\le
       \EE\bigl(a_0\hs(c'\od(h+1))\bigr).
 \label{l-eq-7.5}
\end{equation}
For every positive integer $k$, $c'\od k\ge a_0$, so
\begin{equation}
 a_0\hs(c'\od k)\le(c'\od k)+a_0\cdot2
                         \le c'\cdot\omega+a_0\cdot2=g.
 \label{l-eq-7.6}
\end{equation}
The second inequality uses
$\sup_k c'\od k=c'\cdot\omega$.
Equations \eqref{l-eq-7.5}--\eqref{l-eq-7.6} prove the claimed uniform bound.
\end{proof}

\section{The uniform local estimates at the two roots}
\begin{theorem}\label{local-roots}
Under \eqref{local-sizes}, at every tower cap considered in
\eqref{h-eq-3.1}, Condition~\ref{g-hyp:fibres} holds with
$n=\nu_J$ and $g$ from \eqref{deep-reserve}.
More precisely, H1 has bound $\EE(g)$ and H2 has bound
$\EE(\nu_J)$, both strictly below their padded reserves.
\end{theorem}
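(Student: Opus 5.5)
The proof reduces (H1) and (H2) to admissible local nodes at a fresh coarse root. Proposition~\ref{l-prop:deep} and Proposition~\ref{h-prop:bootstrap}(ii) then bound those nodes uniformly. The cap parameters in \eqref{h-eq-3.1} satisfy \eqref{local-cap}: $H=E_{n-1}(x^m)=2^h$ with $h=E_{n-2}(x^m)\succeq x^2$ because $m\ge2$. Lemma~\ref{h-lem:scales} supplies the window relations, so all earlier sections apply with $L=L_J$.

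For (H2), fix an Archimedean class $E$ of $\Sk_{<T}$ and an actual representative $V\in E$. The aim is to exhibit $Q_{\rho_0}(E)$ as a $\KK_J$ count. Since $\rho_0=2^{-x}=e^{-t}$ with $t=(\log 2)x$, I take the root $\calB=(V,q)$ with $q=x$, or $q=x+xL_J$ if more room is needed. Then $q\ge x$, and $q=O(\ell)$ by Lemma~\ref{h-lem:scales}. The center displacement is $0$, $\log x\le t$, $t=O(q)$ and $t=O(L_J)$ because $L_J\ge x$. So $E$ is an admissible $\KK_J(E;t)$ node at $\calB$. Proposition~\ref{h-prop:bootstrap}(ii) then gives $Q_{\rho_0}(E)\le\EE(\nu_J)<\EE(\nu_J+1)=\Phi$. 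This is uniform in $E$ because the bound in Proposition~\ref{h-prop:bootstrap} is uniform in the root.

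For (H1), fix $b\in B$ and an $\equiv_{\rho_0}$-class $E$ with representative $V$. If $1/b\succeq\rho_0$, the relation is coarser than $\equiv_{\rho_0}$ on $E$ and the count is $1$. Otherwise $E\subseteq\calL(V;w)$ with $w=\rho_0$, and $Q_{1/b}(E)\le\GG(V;w,p)$ with $p=\log b$. I then choose the root $\calB=(V,q)$ with $q=x+\log b$.
\begin{itemize}
\item Since $b<H$, $\log b<\ell$, so $q=O(\ell)$.
\item $p=\log b\ge\log x$ and $p=O(q)$.
\item $w=2^{-x}=O(t_{\calB})$ and the center displacement is $0$.
\end{itemize}
The node is therefore admissible, and Proposition~\ref{l-prop:deep} gives $Q_{1/b}(E)\le\EE(g)<\EE(g+1)=\Theta$. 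The quantifier order needs care: $g$ is defined by \eqref{deep-reserve} before $b$ and $E$ are chosen, and Proposition~\ref{l-prop:deep} is uniform over all roots. Hence the bound is uniform in the labels. Intersections with initial fragments are subfamilies, so they inherit the same bounds.

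The main obstacle is the admissibility bookkeeping at the fresh root, in particular $q=O(\ell)$ together with $q\ge x$ while the depth is large enough to contain $\log b$ or $t$. It also requires that the coarse tree built from $(V,q)$ is finite and meets the hypotheses of Lemma~\ref{l-lem:tree}; I would check this directly from Definition~\ref{l-def:coarse}. A secondary point is that the size inputs \eqref{local-sizes} must hold at the current cap, since they determine $a_0$ and hence $\nu_J$ and $g$. The statement assumes them, so I only need to confirm that the resulting bounds are strictly below $\Theta$ and $\Phi$.
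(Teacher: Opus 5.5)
Your proposal is correct and follows the paper's proof. For (H2) you realize $Q_{\rho_0}(E)$ as the admissible node $\KK(E;x\log2)$ at a fresh root centered at $V$ with $q\arch x$ and apply Proposition~\ref{h-prop:bootstrap}; for (H1) you dispose of the case $1/b\succeq\rho_0$ and otherwise realize $Q_{1/b}(E)$ as $\GG(V;\rho_0,\log b)$ at the root $(V,x+\log b)$ and apply Proposition~\ref{l-prop:deep}. The only difference is cosmetic: the paper takes $q=x+x\log2$ at the (H2) root, and your $q=x$ works equally well.
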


\begin{proof}
For (H2), let $E$ be an Archimedean class and choose $V\in E$.
Take the coarse root
\[
 (U,q)=(V,x+x\log2),\qquad t=x\log2.
\]
All conditions \eqref{l-eq-6.3} hold, so $Q_{\rho_0}(E)=\KK(E;t)$ is
at most $\EE(\nu_J)$ by Proposition~\ref{h-prop:bootstrap}.

For (H1), if $1/b\succeq\rho_0$, the quotient has at most one
element.  Otherwise $b\succ2^x$.
Put $p=\log b$, choose $V\in E$, and take
\[
 (U,q)=(V,x+p),\qquad w=\rho_0.
\]
Here $p\ge\log x$, $p<\ell$, $q=O(\ell)$, and
$w=O(q\theta/x)$.
The band $\calL(V;w)$ is exactly the class $E$.
Thus
$Q_{1/b}(E)=\GG(V;\rho_0,\log b)\le\EE(g)$
by Proposition~\ref{l-prop:deep}.
Empty classes cause no issue.
\end{proof}

\section{The height induction and its ordinal cost}

\begin{theorem}[Finite-tower bounds]\label{h-thm:main}
For every fixed integer \(m\ge2\) and every \(n\ge1\), put
\(r_n=2+n(n+3)/2\). Then
\begin{equation}
 |\Sk_{<E_n(x^m)}|<\omega_{r_n},\qquad
 |\Sk_{<E_n(x^m)}/\arch|<\omega_{r_n-1}.
 \label{h-eq-7.1}
\end{equation}
Consequently \(|\Sk_{<E_n(x)}|<\omega_{10+4^n}\) for every
\(n\ge1\).
\end{theorem}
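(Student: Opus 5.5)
We argue by induction on $n$, simultaneously for all fixed $m\ge2$, proving both inequalities in \eqref{h-eq-7.1}. The cap is $T=E_n(x^m)$, and the lower cap $H=E_{n-1}(x^m)$ supplies $B=\Sk_{<H}$.

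\textbf{Base cases.} For $n=1$ we have $r_1=4$, and $E_1(x^m)=2^{x^m}\le 2^{2^x}$ eventually. So (BM5) gives $|\Sk_{<E_1(x^m)}|\le\omega_3<\omega_4$ and $|\Sk_{<E_1(x^m)}/\arch|\le\omega_2<\omega_3$.

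\textbf{Inductive step.} Fix $n\ge2$. The induction hypothesis at height $n-1$ gives $|B|<\omega_{r_{n-1}}$ and $|B/\arch|<\omega_{r_{n-1}-1}$, with $r_{n-1}\ge4$. Write $k=r_{n-1}$.

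First, Corollary~\ref{g-cor:alphabet} (with its $n$ equal to $k-1\ge2$) yields $|\Lam(T)|\le\omega^\lambda$ for some $\lambda<\omega_{k-1}$. Enlarging $\lambda$ if needed, we may assume $\lambda\ge1$.

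Second, we need the size inputs \eqref{local-sizes}. Since $|B|<\omega_k=\EE(\omega_{k-2})$, we get $|B|\le\EE(a_2)$ for some $a_2<\omega_{k-2}$, and we may take $a_2\ge\omega^3$. For the leaf fragment $\Sk_{<H^\kappa}$ we have $H^\kappa=2^{h\kappa}$ with $h=E_{n-2}(x^m)$ and $\kappa=C(\log\log x)^2$. Hence $H^\kappa<E_{n-1}(x^{m+1})$ eventually. Because the induction runs simultaneously over all degrees, the hypothesis at degree $m+1$ gives $|\Sk_{<H^\kappa}|\le\EE(a')$ with $a'<\omega_{k-2}$. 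Here one checks that $x^2\preceq h$ holds; this is fine for $n\ge2$ since $h\ge x^m$.

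Then Theorem~\ref{local-roots} supplies (H1)--(H2) with parameters $g$ from \eqref{deep-reserve} and $\nu_J$ from \eqref{h-eq-5.1}. Theorem~\ref{g-thm:ledger} turns these into $|\Sk_{<T}|\le\omega^{\omega^{\omega^{\bar\tau}}}$ and $|\Sk_{<T}/\arch|\le\omega^{\omega^{\bar\gamma}}$.

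\textbf{Ordinal bookkeeping (the main obstacle).} It remains to show that $\bar\gamma<\omega_{r_n-3}$ and $\bar\tau<\omega_{r_n-3}$; this gives $\omega^{\omega^{\omega^{\bar\tau}}}<\omega_{r_n}$ and $\omega^{\omega^{\bar\gamma}}<\omega_{r_n-1}$. Since $r_n-r_{n-1}=n+1$, each height step may raise the tower index by $n+1$. We trace the exponent levels as follows.
\begin{itemize}
\item All of $a_0,\lambda,a_2,a'$ lie below $\omega_{k-1}$.
\item The window recursion \eqref{h-eq-5.1} runs through $J+1=n-1$ tiers. Each tier sets $s_{j+1}=\omega^{\nu_j+2}$, where $\nu_j$ is built from $s_j$ by natural sums and by $c_j\cdot\omega$. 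Each tier therefore raises the tower level by one, so $\nu_J<\omega_{k-1+(n-1)}$.
\item Next, $c'=\omega^{\nu_J+\lambda+3}+\cdots$ and $g$ add one more level, so $g<\omega_{k+n-1}$.
\item The ledger parameters $\Gamma,\bar\gamma,\bar\tau$ stay below the same tower level $\omega_{k+n-1}$, by closure of the $\omega_j$ under natural sums and products of smaller ordinals (BM4).
\end{itemize}
The resulting total bound is then $<\omega_{k+n+2}$. Since $k+n+2\le r_n=k+n+1$ fails, the counting must be sharpened. We would first check whether the window recursion in fact gains only one level every other step, or whether the final ledger exponent absorbs one level, as in $\omega^{\omega^{\omega^{\bar\tau}}}$ with $\bar\tau<\omega_{k+n-3}$. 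This requires counting precisely whether $\nu_0<\omega_{k-1}$ already starts at $k-2$, because $a_0$ is built from $a_2<\omega_{k-2}$. With $a_0<\omega_{k-2}$, the tiers give $\nu_J<\omega_{k-2+n-1}$, then $g<\omega_{k+n-2}$, then $\bar\gamma<\omega_{k+n-2}$, and finally the total is $<\omega_{k+n+1}=\omega_{r_n}$. The Archimedean bound is one level lower.

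\textbf{Consequence.} For the final claim, use $E_n(x)\le E_n(x^2)$ together with $r_n\le 10+4^n$.
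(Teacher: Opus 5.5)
Your architecture matches the paper's proof. You run the induction on $n$ simultaneously in $m$, take the cap $B=\Sk_{<H}$ from height $n-1$, and get the leaf fragment from degree $m+1$ via $H^\kappa<E_{n-1}(x^{m+1})$. You then use Corollary~\ref{g-cor:alphabet} for $\lambda<\omega_{k-1}$ and conclude with Theorems~\ref{local-roots} and~\ref{g-thm:ledger}. The gap is in the ordinal bookkeeping, which is the real content of the step. Your repair rests on the claim $a_0<\omega_{k-2}$, and this does not follow from anything you have. By \eqref{local-a0}, $a_0>(a_2+2)\hs\lambda\ge\lambda$. The only available bound on $\lambda$ is $\lambda<\omega_{k-1}$, from Corollary~\ref{g-cor:alphabet} applied to $|B/\arch|<\omega_{k-1}$, and nothing in the induction hypothesis excludes $\lambda\ge\omega_{k-2}$. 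The fact that $a_2<\omega_{k-2}$ is irrelevant, because $a_0$ is dominated by $\lambda$, not by $a_2$. So you may only use $a_0<\omega_{k-1}$, and with your own tier count you are back at a total $<\omega_{k+n+2}$. Your final numbers agree with the paper's only because this unjustified claim cancels a separate miscount.

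The actual source of the overshoot is your claim that each of the $J+1=n-1$ tiers in \eqref{h-eq-5.1} raises the tower level. Tier $0$ does not. Since $s_0=\omega$ is fixed, $c_0=\omega\hs\omega\hs10$ and $c_0\cdot\omega=\omega^2$, so $\nu_0=(a_0\hs\omega^2)+1<\omega_{k-1}$ already from $a_0<\omega_{k-1}$. A level is gained only at the $J=n-2$ transitions $s_{j+1}=\omega^{\nu_j+2}$. This gives $\nu_j<\omega_{k+j-1}$, hence $\nu_J<\omega_{k+n-3}$, as in \eqref{h-eq-7.3}.

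The rest of the count then runs as follows:
\begin{itemize}
\item Since $\lambda<\omega_{k-1}\le\omega_{k+n-3}$, \eqref{deep-reserve} gives $c',g<\omega_{k+n-2}$.
\item Since $\omega^{\lambda\cdot2+1},\omega^\lambda<\omega_k$, the ledger parameters satisfy $\Gamma,\bar\gamma,\bar\tau<\omega_{k+n-2}$.
\item Theorem~\ref{g-thm:ledger} then gives $|\Sk_{<T}/\arch|\le\omega^{\omega^{\bar\gamma}}<\omega_{k+n}=\omega_{r_n-1}$.
\item It also gives $|\Sk_{<T}|\le\omega^{\omega^{\omega^{\bar\tau}}}<\omega_{k+n+1}=\omega_{r_n}$.
\end{itemize}
This is exactly the recurrence $r_n=r_{n-1}+n+1$. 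With this recount, and with the unfounded claim $a_0<\omega_{k-2}$ dropped, your argument becomes the paper's. Your base case and the final consequence are fine as written.
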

\begin{proof}
Induct on \(n\), simultaneously for every fixed \(m\ge2\).
For \(n=1\), \(E_1(x^m)=2^{x^m}<2^{2^x}\).
By \cite[Theorem 13.1]{BM},
\[
 |\Sk_{<2^{x^m}}|<\omega_3,\qquad
 |\Sk_{<2^{x^m}}/\arch|<\omega_2.
\]
The total is a proper initial segment of the classical fragment.
For strictness of the quotient, the class of \(2^{x^{m+1}}\)
lies above every cap class and still belongs to that fragment.
These bounds imply \eqref{h-eq-7.1} with \(r_1=4\).

Now fix \(n\ge2\), set \(H=E_{n-1}(x^m)\), and write
\(r=r_{n-1}\). The cap is bounded by the induction hypothesis.
The leaf comes from the same height because
\begin{equation}
 H^\kappa<E_{n-1}(x^{m+1}).
 \label{h-eq-7.2}
\end{equation}
For height one this follows from \(\kappa x^m<x^{m+1}\).
At greater heights, taking a logarithm reduces it to
\(\kappa E_{n-2}(x^m)<E_{n-2}(x^{m+1})\), which follows
from the faster growth of the right-hand tower.
Thus
\[
 |B|,|\Sk_{<H^\kappa}|<\omega_r,\qquad
 |B/\arch|<\omega_{r-1}.
\]
Choose \(a_2,a'<\omega_{r-2}\), with \(a_2\ge\omega^3\),
to bound these totals by \(\EE(a_2),\EE(a')\).
Corollary~\ref{g-cor:alphabet} gives \(\lambda<\omega_{r-1}\).
Equation \eqref{local-a0} then yields \(a_0<\omega_{r-1}\).

At the fixed cap use the tiers in \eqref{h-eq-3.1}, through \(J=n-2\).
From \eqref{h-eq-5.1}, induction on \(j\) gives
\begin{equation}
 \nu_j<\omega_{r+j-1}.
 \label{h-eq-7.3}
\end{equation}
Indeed this is immediate at \(j=0\). At the next tier,
\(s_j=\omega^{\nu_{j-1}+2}<\omega_{r+j-1}\), and the endpoint \(\omega_{r+j-1}\)
is closed under the finite natural sums and the
multiplication by \(\omega\) used in \eqref{h-eq-5.1}.
Equation \eqref{deep-reserve} gives
\[
 c',g<\omega_{r+J}.
\]
All three parameters \(\Gamma,\bar\gamma,\bar\tau\) in
\eqref{g-ledger-parameters} are consequently \(<\omega_{r+J}\), since
\(\omega^{\lambda\cdot2+1}<\omega_r\) and
\(\omega^\lambda<\omega_r\).
Theorems~\ref{local-roots} and~\ref{g-thm:ledger} now give
\begin{equation}
 |\Sk_{<T}/\arch|<\omega_{r+J+2},\qquad
 |\Sk_{<T}|<\omega_{r+J+3}.
 \label{h-eq-7.4}
\end{equation}
As \(J=n-2\), the height recurrence is
\[
 r_n=r_{n-1}+n+1,\qquad r_1=4.
\]
Its solution is \(r_n=2+n(n+3)/2\), proving \eqref{h-eq-7.1}.
Finally \(E_n(x)<E_n(x^2)\) and
\(2+n(n+3)/2<10+4^n\) for every \(n\ge1\).
\end{proof}

\section{The sharper triple-tower specialization}
\begin{proposition}\label{triple-sharp}
For every fixed positive integer $m$,
\[
 |\Sk_{<E_3(x^m)}|<\omega_{10}.
\]
\end{proposition}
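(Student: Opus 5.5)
The plan is to rerun the height induction of Theorem~\ref{h-thm:main} with a sharper base value, without changing the local machinery. The recurrence $r_n=r_{n-1}+n+1$ was started at $r_1=4$. The classical fragment actually gives one level less: \cite[Theorem 13.1]{BM} yields $|\Sk_{<2^{x^m}}|<\omega_3$ and $|\Sk_{<2^{x^m}}/\arch|<\omega_2$. Starting the recurrence at $r_1'=3$ gives $r_2'=3+0+3=6$ and $r_3'=6+1+3=10$, which is the claim. The case $m=1$ follows from $m=2$ by monotonicity, since $E_3(x)<E_3(x^2)$. This reduction is needed because Lemma~\ref{h-lem:scales} uses $m\ge2$ when $n=2$. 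So I will assume $m\ge2$ throughout, treating all such $m$ simultaneously as before.

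\emph{Base.} For every $m\ge1$ we have $2^{x^m}<2^{2^x}$, so (BM5) bounds the fragment. Strictness of both bounds follows as in the proof of Theorem~\ref{h-thm:main}: the element $2^{x^{m+1}}$ still lies in the classical fragment and lies above every member of the cap.

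\emph{Height two.} Take $H=E_1(x^m)$, $T=E_2(x^m)$, $J=0$. By \eqref{h-eq-7.2} the leaf fragment $\Sk_{<H^\kappa}$ lies inside $\Sk_{<E_1(x^{m+1})}$, so the base applies to it as well. Both totals are then $<\omega_3=\EE(\omega)$, hence $\le\EE(k)$ for some finite $k$. I will put $a_2=\max(k,\omega^3)$ and $a'=k$; the lower bound $a_2\ge\omega^3$ required in \eqref{local-sizes} is harmless. Corollary~\ref{g-cor:alphabet} with $n=2$ uses $|B|<\omega_3$ and $|B/\arch|<\omega_2$, and gives $\lambda<\omega_2$. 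Then \eqref{local-a0} gives $a_0<\omega_2$. Since $s_0=\omega$, we get $c_0<\omega_2$ and $\nu_0<\omega_2$. Next $c'=\omega^{\nu_0+\lambda+3}+a_0+3<\omega_3$, and hence $g<\omega_3$. Likewise $\Gamma$, $\bar\gamma$ and $\bar\tau$ are $<\omega_3$, using $\omega^{\lambda\cdot2+1}<\omega_3$. Theorems~\ref{local-roots} and \ref{g-thm:ledger} then yield
\[
 |\Sk_{<E_2(x^m)}/\arch|\le\EE(\bar\gamma)<\omega_5,\qquad
 |\Sk_{<E_2(x^m)}|\le\omega^{\omega^{\omega^{\bar\tau}}}<\omega_6 .
\]

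\emph{Height three.} Now take $r=6$, $H=E_2(x^m)$, $J=1$. The cap and the leaf $\Sk_{<E_2(x^{m+1})}$ are controlled by the height-two bounds, applied simultaneously for $m$ and $m+1$. I will choose $a_2,a'<\omega_4$, which gives $\lambda<\omega_5$ and $a_0<\omega_5$. Then \eqref{h-eq-7.3} gives $\nu_0<\omega_5$ and $\nu_1<\omega_6$, so $c',g<\omega_7$ and the ledger parameters are $<\omega_7$. Equation \eqref{h-eq-7.4} then gives $|\Sk_{<E_3(x^m)}|<\omega_{6+1+3}=\omega_{10}$.

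The main obstacle is the height-two bookkeeping. The general proof needs $a_2<\omega_{r-2}$, and with $r=3$ this would demand $a_2<\omega_1=\omega$, which conflicts with $a_2\ge\omega^3$. The fix is to check directly that what is really used is only $a_0<\omega_{r-1}=\omega_2$, and that $\omega^3<\omega_2$ keeps this intact. I would re-verify each inequality of \eqref{h-eq-5.1}, \eqref{deep-reserve} and \eqref{g-ledger-parameters} against the thresholds $\omega_2$ and $\omega_3$ rather than citing the generic closure argument. In particular I would confirm that $c_0\cdot\omega<\omega_2$ and that $\omega^{\nu_0+\lambda+3}\cdot\omega<\omega_3$.
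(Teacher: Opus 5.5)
Your argument is correct, but it saves the level in a different place from the paper.

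The paper keeps the height-two bound $\omega_7$ supplied by Theorem~\ref{h-thm:main} and economises at height three. There $\ell=\log E_2(x^m)$ satisfies $\log\ell=O(x^m)$, so it uses a single polynomial window scale $L=x^m$. For that scale Proposition~\ref{shallow} gives the alphabet $\omega^{2m}\le\omega^\omega$ directly, and the successor-window step of Proposition~\ref{h-prop:bootstrap} is never needed. The count is then $7+0+3=10$.

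You instead keep both tiers $x,2^x$ at height three and economise at the base. You use the full strength $<\omega_3$, $<\omega_2$ of the classical fragment, which the paper proves and then weakens to $r_1=4$. Your diagnosis of why the paper could not simply start at $r_1=3$ is right. The only obstruction is the literal clash between $a_2<\omega_{r-2}=\omega$ and $a_2\ge\omega^3$. Downstream, only $a_0<\omega_2$, $\lambda<\omega_2$ and $a<\omega_3$ (inside $\Gamma$) are used, so the clash is harmless. This gives $\omega_6$ at height two and $6+1+3=10$ at height three.

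The trade-off is as follows. Your route reopens the height-two bookkeeping instead of using Theorem~\ref{h-thm:main} as a black box. In exchange it is not specific to triple towers: it improves the main theorem uniformly to $r_n-1=1+n(n+3)/2$. The paper's tier collapse is a height-specific observation that exploits $\log\ell$ being polynomial at that particular cap. The two savings appear independent. Combining your height-two bound $\omega_6$ with the paper's single-tier height-three step would seem to give $6+0+3$, i.e.\ $|\Sk_{<E_3(x^m)}|<\omega_9$.
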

\begin{proof}
For $m\ge2$, Theorem~\ref{h-thm:main} at height two gives
$|\Sk_{<E_2(x^m)}|<\omega_7$ and quotient $<\omega_6$.
Set $H=E_2(x^m)$, $T=E_3(x^m)$; the leaf fragment is below
$E_2(x^{m+1})$ and has the same total bound.
Choose $a_2,a'<\omega_5$ and $\lambda<\omega_6$, so that
$a_0<\omega_6$.

Use a single ordinary width scale $L=x^m$. Proposition~\ref{shallow}
provides the alphabet $\omega^\omega$, so the ordinary one-step
argument has $s=\omega$, $c=s\hs\omega\hs10$ and a uniform
reserve $\EE(v)$, where $v=(a_0\hs(c\cdot\omega))+1<\omega_6$.
The proof is exactly the finite-tree induction in
Proposition~\ref{h-prop:bootstrap}, without the successor-window
step. At this cap $\log\ell=O(x^m)$, so the deep argument applies
with this single ordinary tier. Formula~\eqref{deep-reserve},
with $v$ in place of $\nu_J$, gives $g<\omega_7$.
The two root constructions remain admissible and the global
ledger gives total $<\omega_{10}$.
The case $m=1$ follows by inclusion into $m=2$.
\end{proof}

\section{Cofinality and the full Skolem order}
\begin{lemma}\label{cofinal}
The finite towers $E_n(x)$ are cofinal in $\Sk$.
\end{lemma}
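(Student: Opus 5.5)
We prove by induction on terms that every Skolem function $f$ satisfies $f\le E_n(x)$ for some $n$, depending on $f$. Fixed towers $E_n(x)$ are themselves Skolem functions, so this gives cofinality. It is convenient to prove the slightly stronger uniform statement that every $f\in\Sk$ is eventually at most $E_n(x)$ for some $n$, and we shall also use the growth fact $E_n(x)\ge x\ge 2$ eventually.

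The base cases $1$ and $x$ are bounded by $E_0(x)=x$, eventually. For the inductive step we assume $f\le E_a(x)$ and $g\le E_b(x)$, and put $k=\max(a,b)$. Since the towers are increasing in the height index, $E_k(x)\le E_{k+1}(x)$, and we may take $f,g\le E_k(x)$.

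\textbf{Sum and product.} We have $f+g\le 2E_k(x)$ and $fg\le E_k(x)^2$. Both are at most $E_{k+1}(x)=2^{E_k(x)}$, because $u^2\le 2^u$ for all real $u\ge 4$, and $E_k(x)\ge x$ is eventually at least $4$.

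\textbf{Powers.} For $f^g$ we may assume $f\ge1$: every Skolem function is a positive germ and is either a positive integer or eventually at least $x$ by (BM1), so in fact $f\ge1$ always. Then
\[
 f^g\le E_k(x)^{E_k(x)}=2^{E_k(x)\log_2E_k(x)}\le 2^{2^{E_k(x)}}=E_{k+2}(x),
\]
using $u\log_2u\le 2^u$ for $u\ge 1$. All inequalities are between eventually defined real functions, so they hold for the germs in the eventual order.

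No step is a real obstacle. The only point needing care is that the constructors are monotone in each argument for arguments at least $1$; this is why we use the lower bound $f\ge1$ from (BM1) when bounding $f^g$. The resulting bound $E_{k+2}(x)$ is then a member of the cofinal family, which completes the induction.
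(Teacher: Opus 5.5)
Your proof is correct and is the same argument as the paper's: induction on Skolem terms, with sums and products bounded by $E_{k+1}(x)=2^{E_k(x)}$ and powers by $E_k(x)^{E_k(x)}\le E_{k+2}(x)$. Your explicit check that bases satisfy $f\ge1$, needed for monotonicity of the power, and the numerical inequalities $u^2\le 2^u$ and $u\log_2u\le 2^u$ only spell out details the paper leaves implicit.
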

\begin{proof}
Induct on a Skolem term. If two subterms are below $A=E_k(x)$,
their sum and product are below $2^A$ eventually. Their power is
at most $A^A<2^{2^A}=E_{k+2}(x)$ eventually. The initial terms
$1,x$ lie below $E_1(x)$. This proves cofinality.
\end{proof}
\begin{corollary}
The preceding bounds imply $|\Sk|=\varepsilon_0$.
\end{corollary}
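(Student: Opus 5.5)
The corollary asserts $|\Sk|=\varepsilon_0$. I would prove the two inequalities separately. The lower bound $\varepsilon_0\le|\Sk|$ is Skolem's, recorded in (BM5), so the work is the upper bound $|\Sk|\le\varepsilon_0$. The plan is to combine the cofinality of Lemma~\ref{cofinal} with the finite-height bounds of Theorem~\ref{h-thm:main}.

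First, by Lemma~\ref{cofinal}, every $f\in\Sk$ lies below some $E_n(x)$, so
\[
 \Sk=\bigcup_{n\ge1}\Sk_{<E_n(x)}.
\]
Each $\Sk_{<E_n(x)}$ is an initial segment of the well order $\Sk$ (well ordered by (BM1)), and these segments increase with $n$. The order type of an increasing union of initial segments of a well order is the supremum of their order types. Hence
\[
 |\Sk|=\sup_{n}|\Sk_{<E_n(x)}|.
\]

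Next, I would invoke the final clause of Theorem~\ref{h-thm:main}, which gives $|\Sk_{<E_n(x)}|<\omega_{10+4^n}$ for every $n\ge1$. Alternatively, one can use $E_n(x)<E_n(x^2)$ together with $|\Sk_{<E_n(x^2)}|<\omega_{r_n}$. Since $\varepsilon_0=\sup_k\omega_k$, each finite tower $\omega_k$ is strictly below $\varepsilon_0$. The supremum above is therefore at most $\varepsilon_0$, and with Skolem's lower bound we get equality.

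I expect no real obstacle; the argument is a formal consequence of the results above. The only point needing care is the identification of the order type of the union with the supremum of the order types of its segments. This holds precisely because each $\Sk_{<E_n(x)}$ is an initial segment of $\Sk$ in the eventual order, rather than an arbitrary subset, and that is immediate from the definition of $\Sk_{<R}$.
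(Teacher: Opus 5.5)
Your proposal is correct and follows the paper's proof. Lemma~\ref{cofinal} writes $\Sk$ as an increasing union of the initial segments $\Sk_{<E_n(x)}$, so $|\Sk|=\sup_n|\Sk_{<E_n(x)}|\le\sup_n\omega_{r_n}=\varepsilon_0$ by Theorem~\ref{h-thm:main} and $E_n(x)<E_n(x^2)$, and the reverse inequality comes from (BM5).
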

\begin{proof}
By Lemma~\ref{cofinal}, $\Sk$ is the increasing union of its
fragments below $E_n(x)$. Theorem~\ref{h-thm:main} and inclusion
$E_n(x)<E_n(x^2)$ give
\[
 |\Sk|=\sup_n |\Sk_{<E_n(x)}|
 \le\sup_n\omega_{r_n}=\varepsilon_0.
\]
The reverse inequality is the lower bound recalled in (BM5).
\end{proof}

\section*{Acknowledgement}
The author used contemporary artificial-intelligence tools 
during the preparation of this manuscript. They assisted with exploratory
calculations, checking intermediate arguments, identifying expository
gaps, and preparing successive drafts. All mathematical claims,
citations, and final formulations were examined and approved by the
author, who takes full responsibility for the correctness of the work.
The author acknowledges support by FWO project G0ABD26N.

\end{document}